\newtheorem{lemma}{Lemma}[section]
\newtheorem{theorem}[lemma]{Theorem}
\newtheorem{cor}[lemma]{Corollary}
\newtheorem{prob}[lemma]{Problem}
\newtheorem{prop}[lemma]{Proposition}
\newtheorem{defi}[lemma]{Definition}
\newtheorem{nota}[lemma]{Notation}
\DeclareMathOperator\dom{dom}
\DeclareMathOperator\ima{im}
\DeclareMathOperator\rank{rank}
\DeclareMathOperator\spa{span}
\DeclareMathOperator\sym{Sym}
\DeclareMathOperator\aut{Aut}
\newcommand{\miff}{\Leftrightarrow}
\newcommand{\al}{\alpha}
\newcommand{\bt}{\beta}
\newcommand{\als}{\alpha^*}
\newcommand{\bts}{\beta^*}
\newcommand{\gam}{\gamma}
\newcommand{\del}{\delta}
\newcommand{\sig}{\sigma}
\newcommand{\lam}{\lambda}
\newcommand{\vep}{\varepsilon}
\newcommand{\cg}{\mathcal{G}}
\newcommand{\ta}{\mbox{\tiny $A$}}
\newcommand{\tb}{\mbox{\tiny $B$}}
\newcommand{\tc}{\mbox{\tiny $C$}}
\newcommand{\td}{\mbox{\tiny $D$}}
\newcommand{\tk}{\mbox{\tiny $K$}}
\newcommand{\tl}{\mbox{\tiny $L$}}
\newcommand{\hag}{h_{\gam}^{\al}}
\newcommand\mi{\mathcal{I}}
\newcommand\jo{\sqcup}
\newcommand\cgam{\Gamma}
\newcommand\come{\Omega}
\newcommand\nc{\left\lfloor\frac{n}{2}\right\rfloor}
\newcommand\nca{\left\lfloor\frac{n-1}{2}\right\rfloor}
\title{The Commuting Graph of the Symmetric Inverse Semigroup}
\author{Jo\~ao Ara\'ujo\\
  {\small Universidade Aberta, R. Escola Polit\'{e}cnica, 147}\\
  {\small 1269-001 Lisboa, Portugal}\\{\footnotesize \&}\\
  {\small Centro de \'{A}lgebra, Universidade de Lisboa}\\
  {\small 1649-003 Lisboa, Portugal, jaraujo@ptmat.fc.ul.pt}\\\\
 Wolfram Bentz\\
{\small Centro de \'{A}lgebra, Universidade de Lisboa}\\
  {\small 1649-003 Lisboa, Portugal, wolfbentz@googlemail.com}\\ \\
\and Janusz Konieczny\\
{\small Department of Mathematics, University of Mary Washington}\\ 
{\small Fredericksburg, Virginia 22401, USA, jkoniecz@umw.edu}}
\date{}
\begin{document}
\maketitle

\begin{abstract}
The commuting graph of a finite non-commutative semigroup $S$, denoted $\cg(S)$, is a simple graph whose
vertices are the non-central elements of $S$ and two distinct vertices $x,y$ are adjacent if $xy=yx$.
Let $\mi(X)$ be the symmetric inverse semigroup of partial 
injective transformations on a finite set $X$.
The semigroup $\mi(X)$ has the symmetric group $\sym(X)$ of permutations on~$X$ as its group of units.
In 1989, Burns and Goldsmith determined the clique number of the commuting graph of $\sym(X)$. In 2008,
Iranmanesh and Jafarzadeh found an upper bound of the diameter of $\cg(\sym(X))$, and in 2011, Dol\u{z}an and Oblak
claimed that this upper bound is in fact the exact value.

The goal of this paper is to begin the study of the commuting graph of the symmetric inverse semigroup $\mi(X)$. 
We calculate the clique number of $\cg(\mi(X))$, 
the diameters of the commuting graphs of the proper ideals of $\mi(X)$, and the diameter of $\cg(\mi(X))$ 
when $|X|$ is even or a power of an odd prime. We show that when $|X|$ is odd and divisible by at least two primes,
then the diameter of $\cg(\mi(X))$ is either $4$ or $5$.
In the process, we obtain several results about semigroups, 
such as a description of all commutative subsemigroups of $\mi(X)$ of 
maximum order, and  analogous results for 
commutative inverse and commutative nilpotent subsemigroups of $\mi(X)$. 
The paper closes with a number of problems for experts in combinatorics and in group or semigroup theory.

\vskip 2mm

\noindent\emph{$2010$ Mathematics Subject Classification\/}. 05C25, 20M20, 20M14, 20M18.

\vskip 2mm

\noindent \emph{Keywords}: Commuting graphs of semigroups; symmetric inverse semigroup; 
commutative semigroups; inverse semigroups; nilpotent semigroups; clique number; diameter.

\end{abstract}

\section{Introduction}
\setcounter{equation}{0}

The commuting graph of a finite non-abelian group $G$ is a simple graph whose
vertices are all non-central elements of $G$ and two distinct vertices $x,y$ are adjacent if $xy=yx$.
Commuting graphs of various groups have been studied in terms of their properties (such as connectivity or diameter),
for example in \cite{BaBu03,Bu06,IrJa08,Se01}.
They have also been used as a tool to prove group theoretic results, for example in \cite{Be83,RaSe01,RaSe02}. 

For the particular case of the commuting graph of the finite symmetric group $\sym(X)$, 
it has been proved \cite{IrJa08} that its diameter is $\infty$ when $|X|$ or $|X|-1$ is a prime, and is at most $5$ 
otherwise. It has been claimed \cite{DoOb11} that if neither $|X|$ nor $|X|-1$ is a prime,
then the diameter of $\cg(\sym(X))$ is exactly~$5$. The claim is correct but the proof
contains a gap (see the end of Section 6). 
The clique number of $\cg(\sym(X))$
follows from the classification of the maximum order abelian subgroups of $\sym(X)$
\cite{BuGo89,KoPr89}. 
In addition, there is a very interesting conjecture (which is still open,
as far as we know)  that there exists a common upper bound of the diameters of the (connected) commuting graphs of finite groups. 

The concept of the commuting graph carries over to semigroups. Suppose $S$ is a finite non-commutative semigroup
with center $Z(S)=\{a\in S:ab=ba\mbox{ for all $b\in S$}\}$. The \emph{commuting graph}
of $S$, denoted $\cg(S)$, is the simple graph (that is, an undirected graph with no multiple edges or loops)
whose vertices are the elements of $S-Z(S)$ and whose edges are the sets $\{a,b\}$ such
that $a$ and $b$ are distinct vertices with $ab=ba$.

In 2011, Kinyon and the first and third author \cite{ArKiKo11} initiated the study of 
the commuting graphs of (non-group) semigroups. They calculated the diameters of the ideals of the 
semigroup $T(X)$ of full transformations on a finite set $X$ \cite[Theorems~2.17 and~2.22]{ArKiKo11},
and for every natural number $n$, constructed a semigroup of diameter $n$  \cite[Theorem~4.1]{ArKiKo11}.
(The latter result shows that the aforementioned conjecture on the diameters of finite groups does not hold for semigroups.)
Finally, the study of the commuting graphs of semigroups led to the solution of a longstanding open problem in semigroup theory
\cite[Proposition~5.3]{ArKiKo11}.

The goal of this paper is to extend to the finite symmetric inverse semigroups 
part of the research already carried out for the finite symmetric groups. 
The \emph{symmetric inverse semigroup} $\mi(X)$ on a set $X$ is the semigroup whose elements
are the partial injective transformations on $X$ (one-to-one functions whose domain and image are included in $X$)
and whose multiplication is the composition of functions. We will write functions on the right ($xf$ rather than $f(x))$)
and compose from left to right ($x(fg)$ rather than $f(g(x))$.
The semigroup $\mi(X)$ is universal for the class
of inverse semigroups since every inverse semigroup can be embedded in some $\mi(X)$ \cite[Theorem~5.1.7]{Ho95}. 
This is analogous to the fact that every group can be embedded in some symmetric
group $\sym(X)$ of permutations on~$X$. We note that $\mi(X)$ contains an identity (the transformation that fixes every element of $X$)
and a zero (the transformation whose domain and image are empty). The class of inverse semigroups is arguably the second most important class of semigroups, after groups, 
because inverse semigroups have applications and provide motivation in 
other areas of study, for example, differential geometry and physics \cite{La98,Pa99}.   

Various subsemigroups of the finite symmetric inverse semigroup $\mi(X)$ have been studied.
One line of research in this area has been the determination of subsemigroups of $\mi(X)$
of a given type that are either maximal (with respect to inclusion) or largest (with respect to order).
(See, for example, \cite{AnFeMi07,GaKo94,Ya99,Ya05}.)

In 1989, Burns and Goldsmith \cite{BuGo89} obtained a complete classification of
the abelian subgroups of maximum order of the symmetric group $\sym(X)$, where 
$X$ is a finite set. These abelian subgroups are of three different types depending on the value
of $n$ modulo $3$, where $n=|X|$. We extend this result to the commutative subsemigroups of $\mi(X)$
of maximum order (Theorem~\ref{tgen}). We also determine the maximum order commutative inverse subsemigroups of $\mi(X)$ (Theorem~\ref{tinv})
and the maximum order commutative nilpotent subsemigroups of $\mi(X)$ (Theorem~\ref{tnil}).
As a corollary of Theorem~\ref{tgen}, we obtain the clique number of the commuting graph of $\mi(X)$ (Corollary~\ref{ccli}).

We also find the diameters of the commuting graphs of the proper ideals of $\mi(X)$ (Theorem~\ref{tpro}),
the diameter of $\cg(\mi(X))$ when $n=|X|$ is even (Theorem~\ref{tdie}) and when $n$ is a power of an odd prime
(Theorem~\ref{tpow}),
and establish that the diameter of $\cg(\mi(X))$ is $4$ or $5$ when $n$ is odd and divisible
by at least two distinct primes (Proposition~\ref{pdio}). The diameter results extend to $\cg(\mi(X))$
the results obtained for $\cg(\sym(X))$ by Iranmanesh and Jafarzadeh \cite{IrJa08}
and  Dol\u{z}an and Oblak \cite{DoOb11}. (However, see our discussion at the end of Section~\ref{scdg} regarding a problem
with Dol\u{z}an and Oblak's proof.)
We conclude the paper with some problems that we believe will be of interest
for mathematicians working in combinatorics and semigroup or group theory (Section~\ref{spro}).

The concept of the commuting graph of a transformation semigroup is central for associative algebras
since, in a sense, the study of associativity is the study of commuting transformations
and centralizers \cite{ArKo12}.
This paper builds upon the results on 
centralizers of transformations in general and of partial injective transformations
in particular \cite{AnArKo11,ArKo03,ArKo04,Ko99,Ko02,Ko03,Ko04,Ko10,KoLi98,Li96}.

Throughout this paper, we fix a finite set $X$ and reserve $n$ to denote the cardinality of $X$.
To simplify the language, we will sometimes say ``semigroup in $\mi(X)$'' to mean ``subsemigroup of $\mi(X)$.''
We will denote the identity in $\mi(X)$ by $1$ and the zero in $\mi(X)$ by $0$.

\section{Commuting Elements of $\mi(X)$}
\setcounter{equation}{0}
In this section, we collect some results about commuting transformations in $\mi(X)$
that will be needed in the subsequent sections.

Let $S$ be a semigroup with zero. An element $a\in S$ is called a \emph{nilpotent} if $a^p=0$
for some positive integer $p$; the smallest such $p$ is called the \emph{index} of $a$.
We say that $S$ is a \emph{nilpotent semigroup} if every element
of $S$ is a nilpotent. A special type of a nilpotent semigroup is a \emph{null semigroup}
in which $ab=0$ for all $a,b\in S$. Note that every nonzero nilpotent in a null semigroup has index $2$.
We say that $S$ is a \emph{null monoid}
if it contains an identity $1$ and $ab=0$ for all $a,b\in S$ such that $a,b\ne1$.
Clearly, all null semigroups and all null monoids are commutative.

For $\al\in \mi(X)$, we denote by $\dom(\al)$ and $\ima(\al)$ the domain and image of $\al$, respectively.
The $\emph{rank}$ of $\al$ is the cardinality of $\ima(\al)$ (which is the same as the cardinality of $\dom(\al)$
since $\al$ is injective). The union $\spa(\al)=\dom(\al)\cup\ima(\al)$ will be called the \emph{span} of $\al$.

Let $\al,\bt\in\mi(X)$. We say that $\bt$ is \emph{contained} in $\al$ (or $\al$ \emph{contains} $\bt$)
if $\dom(\bt)\subseteq\dom(\al)$ and $x\bt=x\al$ for all $x\in\dom(\bt)$.
We say that $\al$ and $\bt$ in $\mi(X)$ are \emph{completely disjoint} if $\spa(\al)\cap\spa(\bt)=\emptyset$.
Let $M=\{\gam_1,\ldots,\gam_k\}$ be a set of pairwise completely disjoint elements of $\mi(X)$.
The \emph{join} of the elements of $M$,
denoted $\gam_1\jo\cdots\jo\gam_k$, is the element $\al$ of $\mi(X)$ whose domain is $\dom(\gam_1)\cup\ldots\cup\dom(\gam_k)$
and whose values are defined by $x\al=x\gam_i$, where $\gam_i$ is the (unique) element of $M$ such that $x\in\dom(\gam_i)$. If $M=\emptyset$,
we define the join to be $0$. 
Let $x_0,x_1,\ldots,x_k$ be pairwise distinct elements of $X$. 
\begin{itemize}
  \item A \emph{cycle} of length $k$ ($k\geq1$), written $(x_0\,x_1\ldots\, x_{k-1})$, 
is an element $\rho\in\mi(X)$ with $\dom(\rho)=\{x_0,x_1,\ldots,x_{k-1}\}$, $x_i\rho=x_{i+1}$ for all $0\leq i<k-1$,
and $x_{k-1}\rho=x_0$.
  \item A \emph{chain} of length $k$ ($k\geq1$), written $[x_0\,x_1\ldots\, x_k]$, 
is an element $\tau\in\mi(X)$ with $\dom(\tau)=\{x_0,x_1,\ldots,x_{k-1}\}$ and $x_i\tau=x_{i+1}$ for all $0\leq i\leq k-1$.
\end{itemize}

The following decomposition result is given in \cite[Theorem~3.2]{Li96}.
\begin{prop}\label{pdec}
Let $\al\in\mi(X)$ with $\al\ne0$. Then there exist unique sets $\cgam=\{\rho_1,\ldots,\rho_k\}$ of cycles 
and $\come=\{\tau_1,\ldots,\tau_m\}$ of chains such that
the transformations in $\cgam\cup\come$ are pairwise completely disjoint and
$\al=\rho_1\jo\cdots\jo\rho_k\jo\tau_1\jo\cdots\jo\tau_m$.
\end{prop}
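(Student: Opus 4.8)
The plan is to realize $\al$ as a \emph{functional digraph} on its span: put a directed edge $x\ra x\al$ for each $x\in\dom(\al)$. Because $\al$ is a partial \emph{function}, every vertex has out-degree at most one, and because $\al$ is \emph{injective}, every vertex has in-degree at most one. A finite digraph in which every vertex has in- and out-degree at most one is precisely a disjoint union of simple directed cycles and simple directed paths, and this structural fact is what will yield both existence and uniqueness.

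First I would make the orbit structure precise. For $x\in\spa(\al)$, following the (unique) outgoing and incoming edges generates a forward sequence $x,x\al,x\al^2,\ldots$ and a backward sequence $x,x\al\inv,\ldots$, each continuing as long as the next point lies in $\dom(\al)$ (respectively $\ima(\al)$). Since $X$ is finite, the forward sequence either returns to $x$ --- in which case the orbit closes into a cycle with every point lying in both $\dom(\al)$ and $\ima(\al)$ --- or it halts at a point of $\ima(\al)\setminus\dom(\al)$; symmetrically for the backward sequence. Declaring two points equivalent when one is reachable from the other by such moves partitions $\spa(\al)$ into components, each of which is either a cyclic orbit or a linear orbit possessing a unique source in $\dom(\al)\setminus\ima(\al)$ and a unique sink in $\ima(\al)\setminus\dom(\al)$.

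For existence I would restrict $\al$ to each component. A cyclic component $\{x_0,\ldots,x_{k-1}\}$ gives a cycle $\rho=(x_0\,x_1\ldots x_{k-1})$, and a linear component with source $x_0$ gives a chain $\tau=[x_0\,x_1\ldots x_k]$. By construction the components have pairwise disjoint spans, so the resulting cycles and chains are pairwise completely disjoint; moreover their domains partition $\dom(\al)$ and each restriction agrees with $\al$, whence their join is exactly $\al$. This produces the required sets $\cgam$ and $\come$. For uniqueness, suppose $\al=\rho_1\jo\cdots\jo\rho_k\jo\tau_1\jo\cdots\jo\tau_m$ is any decomposition into pairwise completely disjoint cycles and chains. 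The summands' spans are disjoint with union $\spa(\al)$, and since each summand is connected in the digraph and contributes exactly one cyclic (cycle) or linear (chain) component, the collection of spans must coincide with the component partition above. Within one component the action of $\al$ fixes the cyclic or linear order of its points, hence determines the corresponding cycle or chain; thus $\cgam$ and $\come$ are forced.

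The main obstacle is the bookkeeping in the second step: one must argue carefully, using finiteness together with injectivity, that a forward orbit cannot re-enter a previously visited non-initial point (which would force in-degree two), so it either closes into a genuine cycle or exits $\dom(\al)$, and dually for backward orbits. Once this dichotomy is nailed down, the classification of each component as a cycle or a chain is immediate, and both existence and uniqueness follow with only routine verification.
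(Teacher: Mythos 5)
Your proof is correct. Note that the paper itself does not prove this proposition but simply quotes it from Lipscomb \cite{Li96}; the argument you give --- viewing $\al$ as a functional digraph with in- and out-degree at most one and decomposing its span into cyclic and linear orbits, with injectivity guaranteeing that a forward orbit can only close up at its initial point --- is the standard proof of that result, and your treatment of both existence and uniqueness is sound.
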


Let $\al=\rho_1\jo\cdots\jo\rho_k\jo\tau_1\jo\cdots\jo\tau_m$ as in Proposition~\ref{pdec}. 
Note that every $\rho_i$ and every $\tau_j$ is contained in $\al$.
Moreover, for every integer $p>0$, $\al^p=\rho_1^p\jo\cdots\jo\rho_k^p\jo\tau_1^p\jo\cdots\jo\tau_m^p$. 
For example, if 
\[
\al=\begin{pmatrix}1&2&3&4&5&6&7&8\\2&3&4&1&6&7&8&-\end{pmatrix}=(1\,2\,3\,4)\jo[5\,6\,7\,8]\in\mi(\{1,2,\ldots,8\}), 
\]
then
$\al^2=(1\,3)\jo(2\,4)\jo[5\,7]\jo[6\,8]$, $\al^3=(1\,4\,3\,2)\jo[5\,8]$, and $\al^4=(1)\jo(2)\jo(3)\jo(4)$.

Let $\al\in\mi(X)$. Then:
\begin{itemize}
  \item  $\al\in\sym(X)$ if and only if $\al=\rho_1\jo\cdots\jo\rho_k$
is a join of cycles and $\cup_{i=1}^k \dom(\rho_i)=X$. The join $\al=\rho_1\jo\cdots\jo\rho_k$ is equivalent to the cycle decomposition
of $\al$ in group theory. Note that a cycle $(x_0\,x_1\ldots\, x_{t-1})$ differs from the corresponding cycle in $\sym(X)$
in that the former is undefined for every $x\in X-\{x_0,x_1,\ldots,x_{t-1}\}$, while the latter
fixes  every such $x$.
  \item $\al$ is a nilpotent if and only if $\al=\tau_1\jo\cdots\jo\tau_m$ is a join of chains;
and $\al^2=0$ if and only if $\al=[x_1\,y_1]\jo\cdots\jo[x_m,y_m]$ is a join of chains of length $1$, where
we agree that $\al=0$ if $m=0$.
\end{itemize}

The following proposition has been proved in \cite[Theorem~10.1]{Li96}.

\begin{prop}\label{pcen}
Let $\al,\bt\in\mi(X)$. Then $\al\bt=\bt\al$ if and only if the following conditions are satisfied:
\begin{itemize}
  \item [\rm(1)] If $\rho=(x_0\,x_1\ldots\, x_{k-1})$ is a cycle in $\al$ such that some $x_i\in\dom(\bt)$,
then every $x_j\in\dom(\bt)$ and there exists a cycle $\rho'=(y_0\,y_1\ldots\,y_{k-1})$
in $\al$ (of the same length as $\rho$) such that 
\[
x_0\bt=y_j,\,\,x_1\bt=y_{j+1},\ldots,x_{k-1}\bt=y_{j+k-1},
\]
where $j\in\{0,1,\ldots,k-1\}$ and the subscripts on the $y_i$s are calculated modulo $k$;
  \item [\rm(2)] If $\tau=[x_0\,x_1\ldots\, x_k]$ is a chain in $\al$ such that some $x_i\in\dom(\bt)$,
then there are $p\in\{0,1,\ldots,k\}$ and a chain $\tau'=[y_0\,y_1\ldots\, y_m]$ in $\al$, with $m\geq p$,
such that $\dom(\bt)\cap\{x_0,x_1,\ldots,x_k\}=\{x_0,x_1,\ldots,x_p\}$ and
\[
x_0\bt=y_{m-p},\,\,x_1\bt=y_{m-p+1},\ldots,x_p\bt=y_m;
\]
  \item [\rm(3)] If $x\not\in\spa(\al)$ and $x\in\dom(\bt)$, then either $x\bt\not\in\spa(\al)$
or there exists a chain $\tau'=[y_0\,y_1\ldots\, y_m]$ in $\al$ such that $x\bt=y_m$.
\end{itemize}
\end{prop}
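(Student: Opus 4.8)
The plan is to reduce the bundle of conditions to the elementary description of when two partial maps coincide, and then to \emph{chase} points along the $\al$-orbits furnished by Proposition~\ref{pdec}. For $\al,\bt\in\mi(X)$, the equality $\al\bt=\bt\al$ asserts two things at once: $\dom(\al\bt)=\dom(\bt\al)$, and $x(\al\bt)=x(\bt\al)$ whenever either side is defined. Unwinding the domains, $x\in\dom(\al\bt)$ iff $x\in\dom(\al)$ and $x\al\in\dom(\bt)$, while $x\in\dom(\bt\al)$ iff $x\in\dom(\bt)$ and $x\bt\in\dom(\al)$. The engine of the whole argument is the resulting \emph{transport principle}: if $\al\bt=\bt\al$, then $x\in\dom(\al)$ together with $x\al\in\dom(\bt)$ forces $x\in\dom(\bt)$, $x\bt\in\dom(\al)$, and $(x\al)\bt=(x\bt)\al$. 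I would prove the two implications separately, writing $\al=\rho_1\jo\cdots\jo\rho_k\jo\tau_1\jo\cdots\jo\tau_m$ throughout.

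For the forward implication I assume $\al\bt=\bt\al$ and read off (1)--(3) by orbit-chasing. On a cycle $\rho=(x_0\ldots x_{k-1})$ every point lies in $\dom(\al)\cap\ima(\al)$, so the transport principle propagates membership in $\dom(\bt)$ backward, $x_{j}\in\dom(\bt)\Rightarrow x_{j-1}\in\dom(\bt)$; since the cycle is finite, one $x_i\in\dom(\bt)$ drags in all of them. Writing $y_j=x_j\bt$, the value clause gives $y_j\al=y_{j+1}$ with subscripts modulo $k$, so the distinct points $y_0,\ldots,y_{k-1}$ close up into a length-$k$ cycle $\rho'$ of $\al$ along which $\bt$ intertwines the two $\al$-actions; this is (1). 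A chain $\tau=[x_0\ldots x_k]$ is \emph{one-sided}: $x_0\notin\ima(\al)$ and $x_k\notin\dom(\al)$. Backward transport now only closes downward, so $\dom(\bt)$ meets the chain in an initial block $\{x_0,\ldots,x_p\}$; setting $z_i=x_i\bt$, the value clause gives $z_i\al=z_{i+1}$ for $i<p$, and maximality of $p$ forces $x_p\notin\dom(\al\bt)$, whence $z_p\notin\dom(\al)$. Thus $z_0,\ldots,z_p$ is an $\al$-run ending outside $\dom(\al)$; since no cycle of $\al$ can contain a point mapped outside $\dom(\al)$, this run sits inside a single chain $\tau'=[y_0\ldots y_m]$ of $\al$ with $z_p=y_m$ and $z_i=y_{m-p+i}$, which is (2) with $m\ge p$. (The boundary case $p=0$, where the image may already leave $\spa(\al)$, is absorbed into the reasoning for (3).) Finally, if $x\notin\spa(\al)$ and $x\in\dom(\bt)$, then $x\notin\dom(\al\bt)$, so $x\notin\dom(\bt\al)$ and $x\bt\notin\dom(\al)$; hence either $x\bt\notin\spa(\al)$ or $x\bt\in\ima(\al)\setminus\dom(\al)$ is the terminus $y_m$ of a chain, which is (3).

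For the converse I assume (1)--(3) and verify $\al\bt=\bt\al$ one point at a time, using that each $x\in X$ lies in a unique cycle, in a unique chain, or outside $\spa(\al)$. For a cycle point the intertwining cycle $\rho'$ of (1) makes both composites defined and equal. For a chain point I split at the index $p$ of (2): for the interior $i<p$ both $x_i(\al\bt)$ and $x_i(\bt\al)$ are defined and equal $y_{m-p+i+1}$, while for $i\ge p$ the facts $z_p=y_m\notin\dom(\al)$ and maximality of $p$ (suitably read when $p=k$) place $x_i$ outside both $\dom(\al\bt)$ and $\dom(\bt\al)$. For $x\notin\spa(\al)$ condition (3) gives $x\bt\notin\dom(\al)$, so once more neither composite is defined at $x$. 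Collecting the cases yields $\dom(\al\bt)=\dom(\bt\al)$ with matching values, i.e.\ $\al\bt=\bt\al$.

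The main obstacle is the chain case in both directions, precisely because chains are directed. A cycle mimics the familiar group situation and the transport principle runs both ways, whereas a chain has a source outside $\ima(\al)$ and a sink outside $\dom(\al)$, and this asymmetry is exactly what produces the shift by $m-p$ and the constraint $m\ge p$ linking an initial block of $\tau$ to a terminal block of $\tau'$. The delicate points are checking that the image run lands in a chain rather than a cycle, pinning its last point to the chain terminus $y_m$, and handling the boundary index $i=p$ together with the escape $p=0$, where the image can leave $\spa(\al)$ and the analysis merges with that of (3); by comparison the cycle and out-of-span cases are routine applications of the same transport principle.
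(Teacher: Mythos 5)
The paper itself contains no proof of this proposition: it is imported verbatim from Lipscomb \cite[Theorem~10.1]{Li96}, so there is no in-paper argument to measure yours against. Your direct route --- reduce $\al\bt=\bt\al$ to equality of domains and of values, extract the ``transport principle,'' and chase points through the decomposition of Proposition~\ref{pdec} --- is the natural one, and most of it is sound. The backward implication ((1)--(3) imply commutativity) checks out in every case, as do the forward arguments for (1), for (3), and for (2) when $p\geq1$ (there $x_p\bt=z_{p-1}\al\in\ima(\al)-\dom(\al)$ really is forced to be a chain terminus).

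The gap is exactly the parenthetical you tried to wave away. When $p=0$, i.e.\ $x_0\in\dom(\bt)$ but $x_1\notin\dom(\bt)$, the transport principle gives only $x_0\bt\notin\dom(\al)$; nothing puts $x_0\bt$ into $\ima(\al)$, so $x_0\bt$ need not be the last point $y_m$ of any chain of $\al$. You cannot ``absorb this into the reasoning for (3),'' because the hypothesis of (3) is $x\notin\spa(\al)$, whereas here $x_0$ lies on a chain of $\al$. And this is not a repairable slip in your write-up: it is a counterexample to the forward direction of the statement as printed. Take $X=\{1,2,3\}$, $\al=[1\,2]$, $\bt=[1\,3]$. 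Then $\al\bt=\bt\al=0$, the unique chain $[1\,2]$ of $\al$ meets $\dom(\bt)$ in $\{x_0\}=\{1\}$ (so $p=0$), yet $1\bt=3$ is not the terminus of any chain of $\al$, so (2) fails. What your own chain-chasing actually establishes --- and what the rest of the paper uses in practice --- is the corrected version of (2) in which the case ``$p=0$ and $x_0\bt\notin\spa(\al)$'' is also allowed. So either state and prove that amended condition (checking it against Lipscomb's original formulation), or explicitly acknowledge that the step ``this run sits inside a single chain $\tau'$'' fails for $p=0$; as written, your forward argument does not deliver clause (2) of the statement.
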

The way to remember Proposition~\ref{pcen} is that $\al\bt=\bt\al$ if and only if $\bt$ maps cycles in $\al$
onto cycles in $\al$ of the same length, and it maps initial segments of chains in $\al$ onto terminal segments
of chains in $\al$.

An element $\vep\in\mi(X)$ is an idempotent ($\vep\vep=\vep$) if and only if $\vep=(x_1)\jo(x_2)\jo\cdots\jo(x_k)$ is a join of cycles of length $1$;
and $\sig\in\mi(X)$ is a permutation on $X$ if and only if $\dom(\sig)=X$ and $\sig$ is a join of cycles.
For a function $f:A\to B$ and $A_0\subseteq A$, we denote by $f|_{A_0}$ the restriction of $f$ to $A_0$.

The following lemma will be important in our inductive arguments in Sections~\ref{sinv} and~\ref{sgen}.

\begin{lemma}\label{lres1}
Suppose $\gam\in\mi(X)$ is either an idempotent such that $\gam\notin\{0,1\}$, or a permutation on $X$
such that not all cycles in $\gam$ have the same length. 
Then there is a partition $\{A,B\}$ of $X$
such that $\bt|_A\in\mi(A)$ and $\bt|_B\in\mi(B)$ for all $\bt\in\mi(X)$ such that $\gam\bt=\bt\gam$.
\end{lemma}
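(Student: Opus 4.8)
The plan is to treat the two hypotheses on $\gam$ separately, in each case exhibiting an explicit partition $\{A,B\}$ read off from the cyclic structure of $\gam$, and then verifying invariance under every element commuting with $\gam$ by means of the commuting criterion of Proposition~\ref{pcen}.

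First, suppose $\gam$ is an idempotent with $\gam\notin\{0,1\}$. By the idempotent characterization recorded just before the lemma, $\gam=(x_1)\jo\cdots\jo(x_k)$ is a join of cycles of length $1$, so $\dom(\gam)=\ima(\gam)=\spa(\gam)$; I would set $A=\spa(\gam)$ and $B=X-A$. Since $\gam\ne0$ we have $A\ne\emptyset$, and since $\gam\ne1$ we have $A\ne X$, so $B\ne\emptyset$ and $\{A,B\}$ is a genuine partition. Now let $\bt\in\mi(X)$ commute with $\gam$. Since $\gam$ has no chains, condition (2) of Proposition~\ref{pcen} is vacuous and the second alternative in condition (3) cannot occur. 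Condition (1), applied to each length-$1$ cycle, then forces every point of $A\cap\dom(\bt)$ to be sent into $A$, giving $\bt|_A\in\mi(A)$; and condition (3) forces every point of $B\cap\dom(\bt)$ to be sent to a point outside $\spa(\gam)=A$, that is, into $B$, giving $\bt|_B\in\mi(B)$. (Equivalently, one can argue directly that $\gam\bt=\bt\gam$ forces $A\cap\dom(\bt)=\{x\in\dom(\bt):x\bt\in A\}$, which says exactly that $A$ and $B$ are each $\bt$-invariant.)

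Second, suppose $\gam$ is a permutation whose cycles do not all have the same length. Then $\dom(\gam)=X$ and $\gam$ is a join of cycles whose domains partition $X$. I would fix a length $\ell$ that actually occurs among these cycles, let $A$ be the union of the domains of the cycles of length $\ell$, and let $B=X-A$ be the union of the domains of the cycles of the remaining lengths. This is precisely where the hypothesis enters: because at least two distinct cycle lengths occur, both $A$ and $B$ are nonempty, so $\{A,B\}$ is a partition of $X$. For $\bt$ commuting with $\gam$, condition (1) of Proposition~\ref{pcen} says that whenever a cycle of length $k$ in $\gam$ meets $\dom(\bt)$, its entire domain is carried by $\bt$ into the domain of some cycle of $\gam$ of the same length $k$. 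Consequently $\bt$ sends length-$\ell$ points to length-$\ell$ points and sends points lying on cycles of length $\ne\ell$ to points of the same kind; that is, $\bt|_A\in\mi(A)$ and $\bt|_B\in\mi(B)$.

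The argument involves no hard calculation; its substance is the choice of the correct partition and the reading of invariance directly off Proposition~\ref{pcen}. The step demanding the most care is the permutation case: one must check both that the clause ``maps cycles of length $k$ to cycles of length $k$'' genuinely keeps the two blocks apart, and that the hypothesis of unequal cycle lengths is exactly what guarantees that each block is nonempty, so that the partition is nontrivial. The idempotent case may then be viewed as the degenerate instance in which all relevant cycles have length $1$ while $B$ consists of the points outside $\spa(\gam)$, with condition (3) playing the role of condition (1).
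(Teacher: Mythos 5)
Your proposal is correct and follows essentially the same route as the paper: the same choice of partition in both cases ($A=\spa(\gam)$ for the idempotent, and the union of the spans of the cycles of one fixed length for the permutation), with invariance of the blocks read off from conditions (1) and (3) of Proposition~\ref{pcen}. Your explicit remark that the second alternative of condition (3) cannot occur because an idempotent has no chains is a detail the paper leaves implicit, but the argument is the same.
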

\begin{proof} 
Suppose $\gam=(x_1)\jo(x_2)\jo\cdots\jo(x_k)\in\mi(X)$ is an idempotent such that $\gam\notin\{0,1\}$.
Let $A=\dom(\gam)=\{x_1,x_2,\ldots,x_k\}$ and $B=X-A$. Then $A\ne\emptyset$ (since $\gam\ne0$), $B\ne\emptyset$ (since $\gam\ne1$),
and $A\cap B=\emptyset$. Note that $B=X-\spa(\gam)$. Let $\bt\in\mi(X)$ be such that $\gam\bt=\bt\gam$. Let $x_i\in A$ and $y\in B$
be such that $x_i,y\in\dom(\bt)$. 
Then $x_i\bt=x_j\in A$ by (1) of Proposition~\ref{pcen},
and $y\bt\in B$ by (3) of Proposition~\ref{pcen}. Hence $\bt|_A\in\mi(A)$ and $\bt|_B\in\mi(B)$.

Suppose $\gam\in\mi(X)$ is a permutation on $X$ such that not all cycles in $\gam$ have the same length. 
Select any cycle $\rho$ in $\gam$ and let $k$ be the length of $\rho$.
Let 
\[
A=\{x\in X:\mbox{$x\in\spa(\rho')$ for some cycle $\rho'$ in $\gam$ of length $k$}\}
\]
and let $B=X-A$. 
Then $A\ne\emptyset$ (since $\rho$ is a cycle in $\gam$ of length $k$), $B\ne\emptyset$ (since not all cycles
in $\gam$ have length $k$),
and $A\cap B=\emptyset$. Let $\bt\in\mi(X)$ be such that $\gam\bt=\bt\gam$. Let $x\in A$ and $y\in B$
be such that $x,y\in\dom(\bt)$. 
Then $x\bt\in A$ and $y\bt\in B$ by (1) of Proposition~\ref{pcen}.
Hence $\bt|_A\in\mi(A)$ and $\bt|_B\in\mi(B)$.
\end{proof}

It is straightforward to prove the following lemma.

\begin{lemma}\label{lres2}
Let $\{A,B\}$ be a partition of $X$. Suppose $\al,\bt\in\mi(X)$ are such that  
$\al|_A,\bt|_A\in\mi(A)$ and $\al|_B,\bt|_B\in\mi(B)$. Then:
\begin{itemize}
  \item [\rm(1)] $(\al\bt)|_A=(\al|_A)(\bt|_A)$ and $(\al\bt)|_B=(\al|_B)(\bt|_B)$.
  \item [\rm(2)] $\al\bt=\bt\al$ if and only if $(\al|_A)(\bt|_A)=(\bt|_A)(\al|_A)$ and $(\al|_B)(\bt|_B)=(\bt|_B)(\al|_B)$.
\end{itemize}
\end{lemma}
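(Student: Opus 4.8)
The plan is to prove each of the two claims in Lemma~\ref{lres2} directly from the definitions, exploiting the fact that a partition $\{A,B\}$ of $X$ with $\al|_A,\bt|_A\in\mi(A)$ and $\al|_B,\bt|_B\in\mi(B)$ forces both $\al$ and $\bt$ to preserve the blocks $A$ and $B$ set-theoretically. First I would record the key structural observation: since $\al|_A\in\mi(A)$, we have $A\al\subseteq A$ (more precisely, $(\dom(\al)\cap A)\al\subseteq A$), and likewise $(\dom(\al)\cap B)\al\subseteq B$, with the analogous statements for $\bt$. In other words, no element of $A$ is ever mapped into $B$ by $\al$ or $\bt$, and vice versa.

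For part~(1), I would compute $(\al\bt)|_A$ pointwise. Take any $x\in\dom(\al\bt)\cap A$; then $x\in\dom(\al)$ and $x\al\in\dom(\bt)$, and since $x\in A$ we have $x\al\in A$ by the block-preservation above, so $x\al\in\dom(\bt)\cap A$ and hence $(x\al)\bt\in A$. This shows $x(\al\bt)=x(\al|_A)(\bt|_A)$ and that $\dom((\al\bt)|_A)=\dom((\al|_A)(\bt|_A))$; the two partial maps therefore agree. The computation for $B$ is identical with the roles of $A$ and $B$ swapped. The only genuine content here is verifying that the domains match, which follows because the block-preservation guarantees that the intermediate value $x\al$ stays inside $A$ precisely when $x$ does.

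For part~(2), I would use part~(1) together with the fact that an element of $\mi(X)$ is determined by its restrictions to $A$ and $B$, since $\{A,B\}$ partitions $X$ and both $\al\bt$ and $\bt\al$ preserve the blocks. Concretely, $\al\bt=\bt\al$ holds if and only if $(\al\bt)|_A=(\bt\al)|_A$ and $(\al\bt)|_B=(\bt\al)|_B$; applying part~(1) to each side rewrites these as $(\al|_A)(\bt|_A)=(\bt|_A)(\al|_A)$ and $(\al|_B)(\bt|_B)=(\bt|_B)(\al|_B)$, which is exactly the claim. The forward and backward directions are then both immediate from this equivalence.

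I do not anticipate a serious obstacle, which is why the authors label this lemma ``straightforward''; the only point requiring care is the bookkeeping of domains of partial maps (an element must lie in the domain of the first factor \emph{and} have its image lie in the domain of the second), and ensuring that the block-preservation property is invoked correctly so that composition never mixes $A$ and $B$. Once that is handled cleanly for part~(1), part~(2) is a formal consequence.
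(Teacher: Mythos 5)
Your proof is correct. The paper offers no proof of this lemma (it is stated as ``straightforward''), and your direct verification --- block preservation forcing the intermediate value $x\al$ to stay in the same block, a pointwise check of domains and values for part (1), and the observation that a partial map on $X$ is determined by its restrictions to $A$ and $B$ for part (2) --- is exactly the routine argument the authors intend.
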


We conclude this section with a lemma that is an immediate consequence of the definition of commutativity.

\begin{lemma}\label{lclo}
For all $\al,\bt\in\mi(X)$, if $\al\bt=\bt\al$, then
$(\ima\al)\bt\subseteq\ima(\al)$ and $(\dom(\al))\bt^{-1}\subseteq\dom(\al)$.
\end{lemma}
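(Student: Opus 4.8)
The plan is to unwind the definition of commutativity and read off the two containments directly from the equation $\al\bt=\bt\al$, without invoking the heavy machinery of Proposition~\ref{pcen}. The statement is an elementary consequence of associativity, so the whole proof should be a few lines of set-chasing.

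First I would prove the image containment. Take any $z\in(\ima\al)\bt$; by definition of image of a set under a partial map, $z$ exists only if some point of $\ima(\al)$ lies in $\dom(\bt)$, and then $z=(x\al)\bt$ for some $x\in\dom(\al)$ with $x\al\in\dom(\bt)$. Using the hypothesis $\al\bt=\bt\al$ and writing composition from left to right as the paper does, I would observe that $(x\al)\bt=x(\al\bt)=x(\bt\al)=(x\bt)\al$, which shows that $z=(x\bt)\al\in\ima(\al)$. (Implicitly one checks the domain side is consistent: since $x(\al\bt)$ is defined, so is $x(\bt\al)$, whence $x\in\dom(\bt)$ and $x\bt\in\dom(\al)$, so the final expression genuinely lies in $\ima\al$.) Hence $(\ima\al)\bt\subseteq\ima(\al)$.

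Next I would prove the domain containment $(\dom(\al))\bt^{-1}\subseteq\dom(\al)$. Let $y\in(\dom(\al))\bt^{-1}$, so $y\in\dom(\bt)$ and $y\bt\in\dom(\al)$. The goal is $y\in\dom(\al)$, i.e. that $y\al$ is defined. Since $y\bt\in\dom(\al)$, the composite $(y\bt)\al=y(\bt\al)$ is defined, and by commutativity $y(\bt\al)=y(\al\bt)$ is defined as well; for $y(\al\bt)$ to be defined we need $y\al$ to be defined in the first place, i.e. $y\in\dom(\al)$. This is the one spot where care is needed: because these are \emph{partial} maps, the equality $\al\bt=\bt\al$ means equality of partial functions, so in particular $\dom(\al\bt)=\dom(\bt\al)$, and I would state this explicitly and use $y\in\dom(\bt\al)=\dom(\al\bt)\subseteq\dom(\al)$.

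The only real subtlety, and hence the main obstacle, is keeping the bookkeeping of domains straight throughout: every manipulation of the form $x(\al\bt)=x(\bt\al)$ is valid only on the common domain $\dom(\al\bt)=\dom(\bt\al)$, and the two containments are really the two halves of this domain equality transported through the maps. I expect the cleanest write-up to first record that $\al\bt=\bt\al$ as partial maps forces $\dom(\al\bt)=\dom(\bt\al)$ and $\ima(\al\bt)=\ima(\bt\al)$, and then derive each containment from the appropriate half, so that no case analysis on the cycle/chain structure of $\al$ is ever required.
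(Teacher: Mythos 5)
Your argument is correct and matches the paper's intent exactly: the paper states Lemma~\ref{lclo} without proof, calling it ``an immediate consequence of the definition of commutativity,'' and your direct set-chasing via $x(\al\bt)=x(\bt\al)$ together with the equality $\dom(\al\bt)=\dom(\bt\al)$ of partial maps is precisely that immediate argument. Your explicit attention to where each composite is defined is the right bookkeeping and introduces no gap.
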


\section{The Largest Commutative Inverse Semigroup in $\mi(X)$}\label{sinv}
\setcounter{equation}{0}
In this section, we will prove that the maximum order of a commutative inverse subsemigroup of $\mi(X)$
is $2^n$, and that the semilattice $E(\mi(X))$ of idempotents is the unique commutative inverse subsemigroup
of $\mi(X)$ of the maximum order (Theorem~\ref{tinv}).

An element $a$ of a semigroup $S$ is called \emph{regular} if $a=axa$
for some $x\in S$. If all elements of $S$ are regular, we say that
$S$ is a \emph{regular semigroup}. An element $a'\in S$ is called
an \emph{inverse} of $a\in S$ if $a=aa'a$ and $a'=a'aa'$. Since
regular elements are precisely those that have inverses (if $a=axa$
then $a'=xax$ is an inverse of $a$), we may define a regular semigroup
as a semigroup in which each element has an inverse \cite[p.~51]{Ho95}.
The most extensively studied subclass of the regular semigroups has been the class of inverse semigroups
(see  \cite{Pe84} and \cite[Chapter~5]{Ho95}). A semigroup
$S$ is called an \emph{inverse semigroup}
if every element of $S$ has exactly one inverse \cite[Definition~II.1.1]{Pe84}.
An alternative definition
is that $S$ is an inverse semigroup if it is a regular semigroup and its idempotents
(elements $e\in S$ such that $ee=e$) commute \cite[Theorem~5.1.1]{Ho95}.

A \emph{semilattice} is a commutative semigroup consisting entirely of idempotents. A semilattice
can also be defined as a partially ordered set $(S,\leq)$ such that the greatest lower
bound $a\wedge b$ exists for all $a,b\in S$. Indeed, if $S$ is a semilattice, then $(S,\leq)$, where
$\leq$ is a relation on $S$ defined by $a\leq b$ if $a=ab$, is a poset with $a\wedge b=ab$
for all $a,b\in S$. Conversely, if $(S,\leq)$ is a poset such that $a\wedge b$ exists for all $a,b\in S$,
then $S$ with multiplication $ab=a\wedge b$ is a semilattice. (See \cite[Proposition~1.3.2]{Ho95}.)
For a semigroup $S$,
denote by $E(S)$ the set of idempotents of $S$. The set $E(\mi(X))$ is a semilattice, which,
viewed as a poset, is isomorphic to the poset $(\mathcal{P}(X),\subseteq)$
of the power set $\mathcal{P}(X)$ under inclusion. 

For semigroups $S$ and $T$, we will write $S\cong T$ to mean that $S$ is isomorphic to $T$.

\begin{lemma}\label{lres3}
Let $S$ be a commutative semigroup in $\mi(X)$. Suppose there is a partition $\{A,B\}$ of $X$
such that $\al|_A,\bt|_A\in\mi(A)$ and $\al|_B,\bt|_B\in\mi(B)$ for all $\al,\bt\in S$.
Let
$S_{\!\ta}=\{\al|_{A}:\al\in S\}$ and $S_{\!\tb}=\{\al|_{B}:\al\in S\}$. Then:
\begin{itemize}
  \item [\rm(1)] $S_{\!\ta}$ is a commutative semigroup in $\mi(A)$ and $S_{\!\tb}$ is a commutative semigroup in $\mi(B)$.
  \item [\rm(2)] If $S$ is an inverse semigroup, then $S_{\!\ta}$ and $S_{\!\tb}$ are inverse semigroups.
  \item [\rm(3)] If $S$ is a maximal commutative semigroup in $\mi(X)$, then $S\cong S_{\!\ta}\times S_{\!\tb}$.
\end{itemize}
\end{lemma}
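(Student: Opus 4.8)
The plan is to take the three parts in order, with essentially all the work concentrated in part~(3). For part~(1), the idea is that restricting to $A$ is a semigroup homomorphism: by part~(1) of Lemma~\ref{lres2} the map $\al\mapsto\al|_A$ satisfies $(\al\bt)|_A=(\al|_A)(\bt|_A)$, so it sends $S$ homomorphically onto the subsemigroup $S_{\!\ta}$ of $\mi(A)$, and commutativity of $S_{\!\ta}$ is immediate from $(\al|_A)(\bt|_A)=(\al\bt)|_A=(\bt\al)|_A=(\bt|_A)(\al|_A)$; the same works for $B$. For part~(2), I would note that $S_{\!\ta}$ is a homomorphic image of the inverse semigroup $S$ and hence inverse, or argue directly: its idempotents commute by part~(1), and it is regular because, given $\al\in S$ with inverse $\al'\in S$, iterating part~(1) of Lemma~\ref{lres2} yields $(\al|_A)(\al'|_A)(\al|_A)=(\al\al'\al)|_A=\al|_A$, so $\al'|_A$ is an inverse of $\al|_A$ in $S_{\!\ta}$; likewise for $S_{\!\tb}$.

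For part~(3), I would consider $\Phi\colon S\to S_{\!\ta}\times S_{\!\tb}$, $\al\mapsto(\al|_A,\al|_B)$. It is a homomorphism by part~(1) of Lemma~\ref{lres2}. It is injective because $\{A,B\}$ partitions $X$: the restrictions $\al|_A$ and $\al|_B$ are completely disjoint (their spans lie in $A$ and $B$ respectively), so $\al=(\al|_A)\jo(\al|_B)$ is recovered from the pair $(\al|_A,\al|_B)$. The remaining, and main, task is surjectivity. Given $(\sig,\vth)\in S_{\!\ta}\times S_{\!\tb}$, choose $\al,\bt\in S$ with $\sig=\al|_A$ and $\vth=\bt|_B$, and form $\gam=(\al|_A)\jo(\bt|_B)$, which is defined since the two restrictions are completely disjoint and satisfies $\gam|_A=\sig$, $\gam|_B=\vth$; thus $\Phi(\gam)=(\sig,\vth)$ provided I can show $\gam\in S$.

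To place $\gam$ in $S$ I would use maximality. First I would check that $\gam$ commutes with every $\del\in S$: by part~(2) of Lemma~\ref{lres2} this reduces to $(\al|_A)(\del|_A)=(\del|_A)(\al|_A)$ and $(\bt|_B)(\del|_B)=(\del|_B)(\bt|_B)$, and these follow from $\al\del=\del\al$ and $\bt\del=\del\bt$ (both hold since $S$ is commutative) by another use of part~(2). Since $\gam$ then commutes with all of $S$, the subsemigroup $\langle S\cup\{\gam\}\rangle$ is commutative, every element being expressible as $\gam^k\del$ with $\del\in S$ (or a pure power $\gam^k$), and such products visibly commute. Maximality of $S$ now forces $\langle S\cup\{\gam\}\rangle=S$, so $\gam\in S$, and $\Phi$ is an isomorphism. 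I expect the only real subtlety to be this last step: one must confirm that $\gam$ commuting with each element of $S$ makes the generated \emph{semigroup}—not merely the set $S\cup\{\gam\}$—commutative, which is exactly what lets maximality be applied.
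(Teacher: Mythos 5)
Your proof is correct and follows essentially the same route as the paper: restriction as a homomorphism for (1) and (2), and for (3) the map $\al\mapsto(\al|_A,\al|_B)$ with surjectivity obtained by gluing $\al|_A$ and $\bt|_B$ into $\gam$, checking via Lemma~\ref{lres2} that $\gam$ commutes with all of $S$, and invoking maximality. Your explicit remark that pairwise-commuting generators yield a commutative generated semigroup is a point the paper leaves implicit, but it is the same argument.
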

\begin{proof}
To prove (1), first note that $S_{\!\ta}$ is a subset of $\mi(A)$. It is closed under multiplication
since for all $\al,\bt\in S$, we have $\al\bt\in S$, and so, by Lemma~\ref{lres2}, $(\al|_A)(\bt|_A)=(\al\bt)|_A\in S_{\ta}$. Finally, $S_{\!\ta}$
is commutative by Lemma~\ref{lres2} and the fact that $S$ is commutative.
The proof for $S_{\!\tb}$ is the same.

To prove (2), suppose that $S$ is an inverse semigroup. Let $\al|_A\in S_{\!\ta}$, where $\al\in S$.
Since $S$ is a regular semigroup, there exists $\bt\in S$ such that $\al=\al\bt\al$. Then $\bt|_A\in S_{\!\ta}$
and, by Lemma~\ref{lres2}, $\al|_A=(\al\bt\al)|_A=(\al|_A)(\bt|_A)(\al|_A)$. Thus $\al|_A$ is a regular element of $S_{\!\ta}$,
and so $S_{\!\ta}$ is a regular semigroup. Hence $S_{\!\ta}$ is an inverse semigroup since it is a subsemigroup
of $\mi(A)$ and the idempotents in $\mi(A)$ commute. The proof for $S_{\!\tb}$ is the same.

To prove (3), suppose that $S$ is a maximal commutative semigroup in $\mi(X)$. Define a function
$\phi:S\to S_{\!\ta}\times S_{\!\tb}$ by $\al\phi=(\al|_A,\al|_B)$. Then $\phi$ is a homomorphism
since for all $\al,\bt\in S$, 
\[
(\al\bt)\phi=((\al\bt)|_A,(\al\bt)|_B)=((\al|_A)(\bt|_A),(\al|_B)(\bt|_B))=(\al|_A,\al|_B)(\bt|_A,\bt|_B)=(\al\phi)(\bt\phi).
\]
Further, for all $\al,\bt\in S$, $(\al|_A,\al|_B)=(\bt|_A,\bt|_B)$ implies $\al=\bt$ (since $\{A,B\}$ is a partition of $X$).
Thus $\phi$ is one-to-one. Let $(\sig,\mu)\in S_{\!\ta}\times S_{\!\tb}$. Then $\sig=\al|_A$ and $\mu=\bt|_B$ for some
$\al,\bt\in S$. Define $\gam\in\mi(X)$ by $\gam|_A=\al|_A$ and $\gam|_B=\bt|_B$. Let $\del\in S$. Then
$\al\del=\del\al$ and $\bt\del=\del\bt$, and so, by Lemma~\ref{lres2}, $(\gam|_A)(\del|_A)=(\al|_A)(\del|_A)=(\del|_A)(\al|_A)=(\del|_A)(\gam|_A)$
and $(\gam|_B)(\del|_B)=(\bt|_B)(\del|_B)=(\del|_B)(\bt|_B)=(\del|_B)(\gam|_B$). Hence $\gam\del=\del\gam$, which implies that $\gam\in S$
since $S$ is a maximal commutative semigroup in $\mi(X)$. Thus $\gam\phi=(\gam|_A,\gam|_B)=(\al|_A,\bt|_B)=(\sig,\mu)$,
and so $\phi$ is onto.
\end{proof}

A subgroup $G$ of $\sym(X)$ is called \emph{semiregular} if the identity is the only element of $G$
that fixes any point of $X$ \cite{Wi64}. It is easy to see that $G$ is semiregular if and only
if for every $\sig\in G$, all cycles in $\sig$ have the same length. If $G$ is a semiregular
subgroup of $\sym(X)$ with $n=|X|$, then the order of $G$ divides $n$ \cite[Proposition~4.2]{Wi64},
and so $|G|\leq n$.

We can now prove our main theorem in this section.

\begin{theorem}\label{tinv}
Let $X$ be a finite set with $n\geq1$ elements. Then:
\begin{itemize}
  \item [\rm(1)] If $S$ is a commutative inverse subsemigroup of $\mi(X)$, then $|S|\leq 2^n$.
  \item [\rm(2)] The semilattice $E(\mi(X))$ is the unique commutative inverse subsemigroup of $\mi(X)$ of order~$2^n$.
\end{itemize}
\end{theorem}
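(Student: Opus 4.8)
The plan is to prove both statements simultaneously by induction on $n=|X|$; the base case $n=1$ is immediate since $\mi(X)=\{0,1\}=E(\mi(X))$. Note first that $E(\mi(X))$ is a semilattice, hence a commutative inverse subsemigroup, and has order $2^n$, so the bound in (1) is attained and (2) is a genuine uniqueness statement. The engine of the argument is a single structural observation: if $S$ is a commutative inverse subsemigroup and $\al\in S$, then the inverse of $\al$ in $S$ coincides with its unique inverse $\al^{-1}$ in $\mi(X)$ (the set-theoretic reverse map); since $\al\al^{-1}=\id_{\dom(\al)}$ while $\al^{-1}\al=\id_{\ima(\al)}$, commutativity forces $\dom(\al)=\ima(\al)$. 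Thus every element of $S$ is a join of cycles, every idempotent of $S$ is a partial identity $\id_Y$, and each product $\al\al^{-1}$ is an idempotent lying in $S$.

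Next I would split into two cases according to whether Lemma~\ref{lres1} is available. \emph{Case A:} some $\gam\in S$ is either a nontrivial idempotent or a permutation whose cycles do not all have the same length. Then Lemma~\ref{lres1} yields a partition $\{A,B\}$ of $X$ (both parts nonempty, hence of size $<n$) respected by every element that commutes with $\gam$, that is, by all of $S$. By Lemma~\ref{lres3}(1)--(2) the restriction images $S_A=\{\al|_A:\al\in S\}$ and $S_B=\{\al|_B:\al\in S\}$ are commutative inverse subsemigroups of $\mi(A)$ and $\mi(B)$. The map $\al\mapsto(\al|_A,\al|_B)$ is injective because $\{A,B\}$ partitions $X$, so $|S|\le|S_A|\,|S_B|\le 2^{|A|}2^{|B|}=2^n$ by the inductive hypothesis, giving (1). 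For (2), equality $|S|=2^n$ forces $|S_A|=2^{|A|}$ and $|S_B|=2^{|B|}$; by the inductive hypothesis $S_A=E(\mi(A))$ and $S_B=E(\mi(B))$ consist entirely of idempotents. Hence for each $\al\in S$ both $\al|_A$ and $\al|_B$ are idempotents, so $\al=\al|_A\jo\al|_B$ is an idempotent of $\mi(X)$; thus $S\subseteq E(\mi(X))$, and cardinality forces $S=E(\mi(X))$.

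\emph{Case B:} no element of $S$ is a nontrivial idempotent or a permutation with unequal cycle lengths. Then the only idempotents of $S$ lie in $\{0,1\}$, so for every $\al\in S$ the idempotent $\al\al^{-1}=\id_{\dom(\al)}$ equals $0$ or $1$, forcing $\dom(\al)=\emptyset$ or $\dom(\al)=X$; that is, $\al=0$ or $\al\in\sym(X)$. If $S=\{0\}$ we are done; otherwise $G=S\cap\sym(X)$ is an abelian subgroup of $\sym(X)$ all of whose elements have equal cycle lengths, hence a semiregular subgroup, so $|G|\le n$. Then $|S|\le|G|+1\le n+1<2^n$ for $n\ge2$, so Case B can never attain the maximum and contributes nothing to the uniqueness claim. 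Combining the two cases completes the induction.

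The main obstacle is the structural observation in the first paragraph: it is what converts the abstract hypothesis ``commutative inverse'' into the concrete constraint that every element has $\dom=\ima$, and it is precisely this that lets the non-reducible Case~B collapse to a semiregular group of order at most $n$. After that, the delicate point is the equality analysis in Case~A, where one must extract $|S_A|=2^{|A|}$ and $|S_B|=2^{|B|}$ from $|S|=2^n$ and then invoke the inductive uniqueness to recognize each element of $S$ as an idempotent; notably this needs only the injectivity of the restriction map, not the full product decomposition of Lemma~\ref{lres3}(3).
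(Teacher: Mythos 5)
Your proof is correct and follows essentially the same route as the paper's: induction on $n$, with the same dichotomy between the case where Lemma~\lemref{res1} supplies a splitting partition $\{A,B\}$ (handled via Lemma~\lemref{res3} and the inductive hypothesis) and the residual case where $S$ collapses to a semiregular subgroup of $\sym(X)$ together with $0$, giving $|S|\leq n+1<2^n$. Your preliminary observation that commutativity forces $\dom(\al)=\ima(\al)$ for every $\al\in S$, and your use of only the injectivity of $\al\mapsto(\al|_A,\al|_B)$ in place of the maximality-dependent isomorphism $S\cong S_{\!\ta}\times S_{\!\tb}$ from Lemma~\lemref{res3}(3), are tidy refinements of the same argument rather than a different approach.
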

\begin{proof}
We will prove (1) and (2) simultaneously by induction on $n$. The statements are certainly true for $n=1$.
Let $n\geq2$ and suppose that (1) and (2) are true for every symmetric inverse semigroup on a set
with cardinality less than $n$.

Let $S$ be a maximal commutative inverse semigroup in $\mi(X)$. Let $G=S\cap\sym(X)$ and $T=S-G$.
If $G$ is a semiregular subgroup of $\sym(X)$ and $T=\{0\}$, then $|S|=|G|+1\leq n+1<2^n$ (since $n\geq2$).

Suppose $G$ is not semiregular or $T\ne\{0\}$. In the former case, $G$ (and so $S$) contains a permutation $\sig$
such that not all cycles of $\sig$ are of the same length. Suppose $T\ne\{0\}$. Let $0\ne\al\in T$ and let
$\al'$ be the inverse of $\al$ in $S$. Then $\al=\al\al'\al$ and $\vep=\al\al'$ is an idempotent.
Note that $\vep\ne1$ (since $\al\notin\sym(X)$) and $\vep\ne0$ (since $\al=\vep\al$ and $\al\ne0$).

Thus, in either case, by Lemmas~\ref{lres1} and~\ref{lres3}, there is a partition
$\{A,B\}$ of $X$ such that $S\cong S_{\!\ta}\times S_{\!\tb}$, where
$S_{\!\ta}$ is a commutative inverse semigroup in $\mi(A)$ and
$S_{\!\tb}$ is a commutative inverse semigroup in $\mi(B)$. Let $k=|A|$ and $m=|B|$. Then $1\leq k,m<n$ with $k+m=n$, and so,
by the inductive hypothesis,
$|S|=|S_{\!\ta}|\cdot|S_{\!\tb}|\leq 2^k\cdot2^m=2^{k+m}=2^n$.

Suppose that $S\ne E(\mi(X))$. Then, since $S$ is a maximal commutative inverse semigroup in $\mi(X)$, 
$S$ is not included in $E(\mi(X))$, and so it is not a semilattice. It follows that 
$S_{\!\ta}\ne E(\mi(A))$ or $S_{\!\tb}\ne E(\mi(B))$ (since $S\cong S_{\!\ta}\times S_{\!\tb}$ and the direct
product of two semilattices is a semilattice). We may assume that $S_{\!\ta}\ne E(\mi(A))$. By the inductive
hypothesis again, $\mi(A)<2^k$, and so $|S|=|S_{\!\ta}|\cdot|S_{\!\tb}|<2^k\cdot2^m=2^{k+m}=2^n$.

We have proved that $|S|\leq 2^n$ and if $S\ne E(\mi(X))$ then $|S|<2^n$. Statements (1) and (2) follow.
\end{proof}

\section{The Largest Commutative Nilpotent Semigroups in $\mi(X)$}\label{snil}
\setcounter{equation}{0}
In this section, we consider nilpotent semigroups in $\mi(X)$, that is,
the semigroups whose every element is a nilpotent. We determine the maximum order of a commutative 
nilpotent semigroup in $\mi(X)$,
and describe the commutative nilpotent semigroups in $\mi(X)$ of the maximum order
(Theorem~\ref{tnil}).

\begin{defi}\label{dnul}
{\rm
Let $X$ be a set with $n\geq2$ elements and let $\{K,L\}$ be a partition
of $X$. Denote by $S_{\!\tk\!,\tl}$ the subset of $\mi(X)$ consisting of
all nilpotents of the form $[x_1\,y_1]\jo\cdots\jo[x_r\,y_r]$, where $x_i\in K$, $y_i\in L$, and $0\leq r\leq\min\{|K|,|L|\}$.
}
\end{defi}

For example, let $n=4$, $X=\{1,2,3,4\}$, $K=\{1,2\}$, and $L=\{3,4\}$. Then
\[
S_{\!\tk\!,\tl}=\{0,[1\,3],[1\,4],[2\,3],[2\,4],[1\,3]\jo[2\,4],[1\,4]\jo[2\,3]\}.
\]

\begin{lemma}\label{lmax}
Any set $S_{\!\tk\!,\tl}$ from \textnormal{Definition~\ref{dnul}}
is a null semigroup of order $\sum_{r=0}^m\binom{m}{r}\binom{n-m}{r}r!$,
where $m=\min\{|K|,|L|\}$.
\end{lemma}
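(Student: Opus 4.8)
The plan is to establish two things about $S_{\!\tk\!,\tl}$: first that it is a null semigroup (every product of two elements is $0$), and second that it has the claimed cardinality. These are essentially independent, so I would treat them separately.

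For the null-semigroup property, I would take two arbitrary elements $\al,\bt\in S_{\!\tk\!,\tl}$ and show $\al\bt=0$. Write $\al=[x_1\,y_1]\jo\cdots\jo[x_r\,y_r]$ with $x_i\in K$ and $y_i\in L$. The key structural observation is that $\ima(\al)\subseteq L$ (every value $x_i\al=y_i$ lies in $L$), while $\dom(\bt)\subseteq K$ (every chain $[x_j'\,y_j']$ in $\bt$ has domain element $x_j'\in K$). Since $K\cap L=\emptyset$, we get $\ima(\al)\cap\dom(\bt)=\emptyset$, and therefore the composite $\al\bt$ has empty domain, i.e. $\al\bt=0$. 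This shows $S_{\!\tk\!,\tl}$ is a null semigroup, provided I also check closure, but in fact the null property already forces closure since $0\in S_{\!\tk\!,\tl}$ (the case $r=0$). The only small point to verify is that $S_{\!\tk\!,\tl}$ is closed under the operation, which is automatic once every product is $0$.

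For the cardinality, I would count the elements of $S_{\!\tk\!,\tl}$ by the parameter $r$, the number of chains. For a fixed $r$ with $0\leq r\leq m=\min\{|K|,|L|\}$, an element $[x_1\,y_1]\jo\cdots\jo[x_r\,y_r]$ is determined by: choosing an $r$-element subset of $K$ for the domain points (there are $\binom{|K|}{r}$ ways), choosing an $r$-element subset of $L$ for the image points (there are $\binom{|L|}{r}$ ways), and choosing a bijection between them (there are $r!$ ways). Writing $m=\min\{|K|,|L|\}$ and noting $|K|+|L|=n$, one of the two binomial factors becomes $\binom{m}{r}$ and the other $\binom{n-m}{r}$; since $\binom{|K|}{r}\binom{|L|}{r}=\binom{m}{r}\binom{n-m}{r}$ regardless of which of $K,L$ is smaller, summing over $r$ gives
\[
|S_{\!\tk\!,\tl}|=\sum_{r=0}^{m}\binom{m}{r}\binom{n-m}{r}r!,
\]
as claimed. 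The decomposition of each element into completely disjoint chains is unique by Proposition~\ref{pdec}, which guarantees that distinct choices of the data $(\{x_i\},\{y_i\},\text{bijection})$ yield distinct elements, so no overcounting occurs.

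I expect the main (though still minor) obstacle to be purely bookkeeping: making sure the count is a genuine bijection, i.e. that the map from the triple of choices to elements of $S_{\!\tk\!,\tl}$ is well defined and injective. Well-definedness needs that the resulting partial map is indeed a join of completely disjoint length-one chains with the stated domain/image constraints, and injectivity follows from the uniqueness in Proposition~\ref{pdec}. Neither is hard, but I would state the uniqueness appeal explicitly so that the sum is justified as counting each element exactly once.
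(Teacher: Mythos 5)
Your proposal is correct and follows essentially the same route as the paper: products vanish because $\ima(\al)\subseteq L$ is disjoint from $\dom(\bt)\subseteq K$, and the count proceeds by choosing the domain in $K$, the image in $L$, and a bijection between them, with the symmetry $|S_{\!\tk\!,\tl}|=|S_{\!\tl\!,\tk}|$ handling which of $K,L$ realizes the minimum. Your explicit appeal to the uniqueness of the chain decomposition for injectivity of the count is a minor addition the paper leaves implicit.
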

\begin{proof}
Let $\al,\bt\in S_{\!\tk\!,\tl}$ and suppose $x\in\dom(\al)$. Then $x\al\notin\dom(\bt)$
(since $x\al\in L$), and so $x\notin\dom(\al\bt)$. It follows that $\al\bt=0$.

Let $m=\min\{|K|,|L|\}$. Suppose $m=|K|$, so $|L|=n-m$. Let
$\al=[x_1\,y_1]\jo\cdots\jo[x_r\,y_r]$ be a transformation in $S_{\!\tk\!,\tl}$ of rank $r$.
Then, clearly, $0\leq r\leq m$. The domain of $\al$ can be selected in $\binom{m}{r}$ ways,
the image in $\binom{n-m}{r}$ ways, and the domain can be mapped to the image in $r!$ ways. It follows
that $S_{\!\tk\!,\tl}$ contains $\binom{m}{r}\binom{n-m}{r}r!$ transformations of rank $r$,
and so $|S|=\sum_{r=0}^m\binom{m}{r}\binom{n-m}{r}r!$. The result is also true when $m=|L|$ since
$S_{\!\tk\!,\tl}$ has the same order as $S_{\!\tl\!,\tk}$.
\end{proof}

\begin{defi}\label{dbal}
{\rm
A null semigroup  $S_{\!\tk\!,\tl}$ from \textnormal{Definition~\ref{dnul}}
such that $|K|=\nc$ and $L=n-\nc$, or vice versa, will be called
a \emph{balanced null semigroup}. By Lemma~\ref{lmax},
any balanced null semigroup has order
\begin{equation}\label{e1snil}
\lam_n=\sum_{r=0}^{\nc}\binom{\nc}{r}\binom{n-\nc}{r}r!.
\end{equation}
If $S_{\!\tk\!,\tl}$ is a balanced null semigroup, then
the monoid $S_{\!\tk\!,\tl}\cup\{1\}$ will be called a \emph{balanced null monoid}.

Note that $\lam_n$ from (\ref{e1snil}) is also defined for $n=1$, and that $\lam_1=1$
is the order of the trivial nilpotent semigroup $S=\{0\}$.
}
\end{defi}

Our objective is to prove that the maximum order of a commutative nilpotent subsemigroup of $\mi(X)$
is  $\lam_n$, and that, if $n\notin\{1,3\}$,
the balance null semigroups $S_{\!\tk\!,\tl}$ are the only commutative nilpotent subsemigroups
of $\mi(X)$ of order $\lam_n$ (Theorem~\ref{tnil}).
We will need some combinatorial lemmas, which we present now.

\begin{lemma}\label{lcom1}
For every $n\geq4$, $\lam_n=\lam_{n-1}+\nc\lam_{n-2}$.
\end{lemma}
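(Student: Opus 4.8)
The plan is to prove the identity combinatorially, exploiting the interpretation of $\lam_n$ that is already implicit in the proof of Lemma~\ref{lmax}. For nonnegative integers $a,b$ set
\[
f(a,b)=\sum_{r\ge 0}\binom{a}{r}\binom{b}{r}\,r!,
\]
so that $f(a,b)$ is the number of partial injective maps from an $a$-element set $A$ into a $b$-element set $B$: one chooses the $r$-element domain in $\binom{a}{r}$ ways, the $r$-element image in $\binom{b}{r}$ ways, and a bijection between them in $r!$ ways. Comparing with (\ref{e1snil}) gives $\lam_n=f(\nc,\,n-\nc)$; equivalently, $\lam_n$ is the order of a balanced null semigroup $S_{\!\tk\!,\tl}$, each of whose nonzero elements is exactly such a partial injection between the two blocks. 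Note that $f$ is symmetric, $f(a,b)=f(b,a)$.

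First I would establish the one-step recurrence
\[
f(a,b)=f(a-1,b)+b\,f(a-1,b-1)\qquad(a\ge 1),
\]
by fixing an element $x_0\in A$ and conditioning on its behaviour. Either $x_0$ lies outside the domain, and the remainder is a partial injection from $A-\{x_0\}$ into $B$, of which there are $f(a-1,b)$; or $x_0$ is sent to one of the $b$ points $y\in B$, and the remainder is a partial injection from $A-\{x_0\}$ into $B-\{y\}$, contributing $b\,f(a-1,b-1)$. Using $f(a,b)=f(b,a)$ this immediately yields the symmetric form $f(a,b)=f(a,b-1)+a\,f(a-1,b-1)$ for $b\ge 1$. (Both identities can alternatively be read off directly from the defining sum via Pascal's rule, should a purely algebraic derivation be preferred.)

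Then I would split into the two parity cases and match the floors to $\nc$. If $n=2t$ is even, then $\nc=t$ and one checks $\lam_n=f(t,t)$, $\lam_{n-1}=f(t-1,t)$, and $\lam_{n-2}=f(t-1,t-1)$; the first recurrence with $a=b=t$ gives $f(t,t)=f(t-1,t)+t\,f(t-1,t-1)$, which is precisely $\lam_n=\lam_{n-1}+\nc\lam_{n-2}$. If $n=2t+1$ is odd, then $\nc=t$ and $\lam_n=f(t,t+1)$, $\lam_{n-1}=f(t,t)$, and $\lam_{n-2}=f(t-1,t)$; the symmetric recurrence with $a=t$, $b=t+1$ gives $f(t,t+1)=f(t,t)+t\,f(t-1,t)$, again $\lam_n=\lam_{n-1}+\nc\lam_{n-2}$. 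The hypothesis $n\ge 4$ guarantees $t\ge 2$, so that all of $\lam_n,\lam_{n-1},\lam_{n-2}$ are instances of (\ref{e1snil}) with index at least $2$ and the recurrences above are applicable.

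The computation is essentially routine once the combinatorial model is in place; the only real care needed is bookkeeping. The main (minor) obstacle is keeping the floor functions straight across the two parities — verifying in particular that the coefficient produced by the recurrence is in each case exactly $\nc$ rather than $\nca$ or $n-\nc$, and that $\lam_n$, $\lam_{n-1}$, $\lam_{n-2}$ are $f$ evaluated at the correct pairs of block sizes, since $\lam$ is indexed by the single parameter $n$ while $f$ is indexed by the two sizes $\nc$ and $n-\nc$.
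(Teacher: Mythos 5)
Your proof is correct and uses essentially the same idea as the paper: fix one element of a block of a balanced null semigroup (equivalently, of the domain or codomain of a partial injection) and condition on whether it lies in the domain, splitting the count into $\lam_{n-1}$ plus $\nc\lam_{n-2}$. The paper phrases this directly in terms of the sets $S_{\!\tk\!,\tl}$ rather than your two-parameter function $f(a,b)$, but the decomposition and the bookkeeping of the floor functions are the same.
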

\begin{proof}
Let $m=\nc$.
Consider a balanced null semigroup $S_{\!\tk\!,\tl}$, where $|K|=n-m$ and $|L|=m$.
Then $\lam_n=|S_{\!\tk\!,\tl}|$. Fix $x\in K$. Then $S_{\!\tk\!,\tl}=S_1\cup S_2$, where
$S_1=\{\al\in S_{\!\tk\!,\tl}:x\notin\dom(\al)\}$ and $S_2=\{\al\in S_{\!\tk\!,\tl}:x\in\dom(\al)\}$.
Then $S_1=S_{\!\tk-\{x\}\!,\tl}\subseteq\mi(X-\{x\})$ with $|K-\{x\}|=\left\lfloor\frac{n-1}{2}\right\rfloor$
and $|L|=(n-1)-\left\lfloor\frac{n-1}{2}\right\rfloor$. Thus $|S_1|=\lam_{n-1}$.

Let $\al\in S_2$. Then $\al=[x\,y]\jo\bt$, where $y\in L$ and $\bt\in S_{\!\tk-\{x\}\!,\tl-\{y\}}\subseteq\mi(X-\{x,y\})$
with  $|K-\{x\}|=(n-2)-\left\lfloor\frac{n-2}{2}\right\rfloor$ and $|L|=\left\lfloor\frac{n-2}{2}\right\rfloor$.
For a fixed $y\in L$, the mapping $\al=[x\,y]\jo\bt\to\bt$ is a bijection from $\{\al\in S_2:x\al=y\}$ to
$S_{\!\tk-\{x\}\!,\tl-\{y\}}$. Thus, since there are $|L|=m$ choices for $y$, we have
$|S_2|=m|S_{\!\tk-\{x\}\!,\tl-\{y\}}|=m\lam_{n-2}$. Hence
\[
\lam_n=|S_{\!\tk\!,\tl}|=|S_1|+|S_2|=\lam_{n-1}+\nc\lam_{n-2}
\]
since $m=\nc$.
\end{proof}

\begin{lemma} \label{lcom2}
Let $a,b$ be integers such that $1\leq a,b\leq n$, $a<\nc$, and $b=n-a$. Then
\[
\sum_{r=0}^{a}\binom{a}{r}\binom{b}{r}r!<\sum_{r=0}^{a+1}\binom{a+1}{r}\binom{b-1}{r}r!.
\]
\end{lemma}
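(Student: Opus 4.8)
The plan is to prove the inequality by comparing the two sums term by term. First I would record the one structural consequence of the hypotheses that I need: since $a<\nc$ and the quantities are integers, $a\le\nc-1$, and therefore $b=n-a\ge n-\nc+1\ge a+2$. In particular $b\ge a+2$, so throughout both sums $r\le a\le b-2$. (It is worth keeping the combinatorial meaning in mind: $\sum_{r}\binom{p}{r}\binom{q}{r}r!$ is the number of injective partial maps between a set of size $p$ and a set of size $q$, so the statement asserts that a more balanced split of $n$ admits strictly more such maps. That is the intuition, but the cleanest route is the direct estimate.)

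Next I would fix $r$ with $1\le r\le a$ and compare the two $r$-th summands. All four binomials are positive, because $r\le a<a+1$ and $r\le a\le b-2<b-1$, so I may form the ratio
\[
\frac{\binom{a+1}{r}\binom{b-1}{r}}{\binom{a}{r}\binom{b}{r}}=\frac{a+1}{a+1-r}\cdot\frac{b-r}{b}=\frac{(a+1)(b-r)}{b(a+1-r)}.
\]
Clearing denominators, this ratio is at least $1$ exactly when $(a+1)(b-r)\ge b(a+1-r)$, which simplifies to $br\ge(a+1)r$, i.e.\ to $b\ge a+1$. Since $b\ge a+2$ this holds \emph{strictly}, so for every $r$ with $1\le r\le a$ the $r$-th summand on the right strictly exceeds the $r$-th summand on the left; the $r=0$ summands are both equal to $1$.

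Finally I would assemble the pieces. Summing the per-term comparison over $0\le r\le a$ already gives $\sum_{r=0}^{a}\binom{a+1}{r}\binom{b-1}{r}r!>\sum_{r=0}^{a}\binom{a}{r}\binom{b}{r}r!$, with strictness furnished by any single index in the range $1\le r\le a$ (and such an index exists because $a\ge1$). The right-hand sum then carries one extra nonnegative summand, the term $r=a+1$, which only enlarges it further, yielding the claim. The main, and essentially only, obstacle is bookkeeping: using the derived bound $b\ge a+2$ to force strictness, invoking $a\ge1$ to guarantee a genuinely strict term, and checking that the binomials stay in range so the ratio manipulation is legitimate. An alternative I would keep in reserve is an induction on $a$ via the recurrence $M(p,q)=M(p,q-1)+p\,M(p-1,q-1)$ for $M(p,q)=\sum_{r}\binom{p}{r}\binom{q}{r}r!$, which reduces the difference $M(a+1,b-1)-M(a,b)$ to $(b-1)M(a,b-2)-a\,M(a-1,b-1)$ and closes by the inductive hypothesis; but the term-by-term estimate is shorter and self-contained.
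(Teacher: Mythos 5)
Your proof is correct and takes essentially the same approach as the paper's: a term-by-term comparison establishing $\binom{a}{r}\binom{b}{r}r!\leq\binom{a+1}{r}\binom{b-1}{r}r!$ for $0\leq r\leq a$ (you via the ratio of summands, the paper via an equivalent chain of algebraic implications starting from $-b\leq -a-1$). The only difference is where strictness is located: you extract it from the strictly-greater per-term comparison at any $r\geq 1$ using $b\geq a+2$, while the paper relies on the extra term at $r=a+1$ being positive --- which also requires $b\geq a+2$, a point the paper leaves implicit but you make explicit.
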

\begin{proof}
Since  $a<\nc$, and $b=n-a$, we have $a<b$ and hence $a+1\leq b$. Let $0\leq r\leq a$. Then

\begin{align}
-b\leq -a-1 &\Rightarrow-br\leq -ar-r\notag \\
&\Rightarrow ba+b-br\leq ba+b-ar-r\notag \\
&\Rightarrow  b(a+1-r)\leq (b-r)(a+1)\notag \\
&\Rightarrow  \frac{b}{b-r}\leq \frac{(a+1)}{(a+1-r)}\notag \\
&\Rightarrow  \frac{b}{(b-r)(a-r)!(b-r-1)!}\leq \frac{(a+1)}{(a+1-r)(a-r)!(b-r-1)!}\notag \\
&\Rightarrow \frac{b}{(b-r)!(a-r)!}\leq \frac{(a+1)}{(a+1-r)!(b-r-1)!}\notag \\
&\Rightarrow  \frac{a!(b-1)!b}{(b-r)!(a-r)!}\leq \frac{a!(b-1)!(a+1)}{(a+1-r)!(b-r-1)!}\notag \\
&\Rightarrow  \frac{a!b!}{r!r!(b-r)!(a-r)!}\leq \frac{(b-1)!(a+1)!}{r!r!(a+1-r)!(b-r-1)!}\notag \\
&\Rightarrow  \frac{a!}{r!(a-r)!}\frac{b!}{r!(b-r)!}\leq \frac{(a+1)!}{r!(a+1-r)!}\frac{(b-1)!}{r!(b-r-1)!}\notag \\
&\Rightarrow  \binom{a}{r}\binom{b}{r}\leq \binom{a+1}{r}\binom{b-1}{r}\notag \\
&\Rightarrow  \binom{a}{r}\binom{b}{r}r!\leq \binom{a+1}{r}\binom{b-1}{r}r!\notag
\end{align}

Hence $\sum_{r=0}^{a}  \binom{a}{r}\binom{b}{r}r!\leq \sum_{r=0}^{a}\binom{a+1}{r}\binom{b-1}{r}r!$, and so
$\sum_{r=0}^{a}  \binom{a}{r}\binom{b}{r}r!< \sum_{r=0}^{a+1}\binom{a+1}{r}\binom{b-1}{r}r!$.
\end{proof}

\begin{lemma}\label{lcom3}
Let $n>10$. Then:
\begin{itemize}
  \item [\rm(1)] $\lam_n+1>2(\lam_{n-1}+1)$.
  \item [\rm(2)] For every positive integer $k$ such that $k\geq10$ and $n-k\geq10$,
\[
\lam_n+1>(\lam_k+1)(\lam_{n-k}+1).
\]
\end{itemize}
\end{lemma}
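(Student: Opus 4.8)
The engine of both parts is the recursion $\lam_n=\lam_{n-1}+\nc\lam_{n-2}$ from Lemma~\ref{lcom1} (valid for $n\geq4$). The plan is first to record an auxiliary growth estimate, then to deduce (1), and finally to prove (2) by a strong induction that feeds back into the same recursion. The auxiliary claim is that $\lam_m\geq2\lam_{m-1}$ for every $m\geq6$, which I would prove by induction on $m$. After checking $m=6,7$ from the explicit values $\lam_5=13$, $\lam_6=34$, $\lam_7=73$, the step for $m\geq8$ rewrites the target as $\lfloor m/2\rfloor\lam_{m-2}\geq\lam_{m-1}$, substitutes $\lam_{m-1}=\lam_{m-2}+\lfloor(m-1)/2\rfloor\lam_{m-3}$, and reduces it to $(\lfloor m/2\rfloor-1)\lam_{m-2}\geq\lfloor(m-1)/2\rfloor\lam_{m-3}$; invoking the induction hypothesis $\lam_{m-2}\geq2\lam_{m-3}$, it then suffices to check the purely numerical inequality $2(\lfloor m/2\rfloor-1)\geq\lfloor(m-1)/2\rfloor$, which holds for all $m\geq5$ by a one-line even/odd split.

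For part (1) I would apply the recursion twice to write $\lam_n-2\lam_{n-1}=\nc\lam_{n-2}-\lam_{n-1}=(\nc-1)\lam_{n-2}-\lfloor(n-1)/2\rfloor\lam_{n-3}$, and then use the auxiliary claim $\lam_{n-2}\geq2\lam_{n-3}$ (legitimate since $n>10$ gives $n-2\geq9\geq6$) to obtain $\lam_n-2\lam_{n-1}\geq\bigl(2(\nc-1)-\lfloor(n-1)/2\rfloor\bigr)\lam_{n-3}$. A short case check shows the bracketed coefficient equals $\nc-1\geq5$ for even $n$ and $\nc-2\geq3$ for odd $n$ when $n>10$, so $\lam_n-2\lam_{n-1}\geq3\lam_{n-3}>1$, which is exactly the desired $\lam_n+1>2(\lam_{n-1}+1)$.

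For part (2) I would argue by strong induction on $n=k+j$ over all pairs with $k,j\geq10$, abbreviating $M_m=\lam_m+1$; in this notation the recursion becomes $M_m=M_{m-1}+\lfloor m/2\rfloor M_{m-2}-\lfloor m/2\rfloor$ and the goal reads $M_n>M_kM_j$. If both $k,j\in\{10,11\}$ we are in a base case; otherwise the larger index, say $j$ (using the symmetry of the statement), satisfies $j\geq12$, and the induction hypothesis applied to the smaller splits $(k,j-1)$ and $(k,j-2)$ gives $M_{n-1}>M_kM_{j-1}$ and $M_{n-2}>M_kM_{j-2}$. Feeding these into the recursion for $M_n$ and expanding $M_kM_j$ through the recursion for $M_j$, the inequality collapses, after cancelling the common term $M_kM_{j-1}$, to the elementary statement $(\nc-\lfloor j/2\rfloor)M_kM_{j-2}+\lfloor j/2\rfloor M_k\geq\nc$; since $\nc=\lfloor(k+j)/2\rfloor\geq\lfloor k/2\rfloor+\lfloor j/2\rfloor$, $M_{j-2}\geq1$ and $M_k\geq2$, the left-hand side is at least $2(\lfloor k/2\rfloor+\lfloor j/2\rfloor)\geq\nc$, as required. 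The finitely many base cases are the splits with $k,j\in\{10,11\}$, namely $(10,10)$, $(10,11)$, $(11,11)$, which I would settle by computing $\lam_{20},\lam_{21},\lam_{22}$ directly from the recursion and comparing with $(\lam_{10}+1)^2$, $(\lam_{10}+1)(\lam_{11}+1)$, and $(\lam_{11}+1)^2$.

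The main obstacle is the bookkeeping in part (2): one must keep track of the three separate $+1$'s, which is precisely what the substitution $M_m=\lam_m+1$ and the resulting $-\lfloor m/2\rfloor$ correction terms in the recursion are designed to handle, and one must arrange the induction so that always reducing the larger index lands on valid pairs with both parts still $\geq10$, leaving only the three explicit base cases. Once this is organized, the entire argument rests on the recursion of Lemma~\ref{lcom1} together with the two elementary inequalities highlighted above.
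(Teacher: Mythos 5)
Your argument is correct, but it takes a genuinely different route from the paper's. The paper proves both parts by explicit combinatorial constructions inside the balanced null semigroups: for (1) it injects $S_{\!\ta,\tb}$ (or, in the odd case, its set of elements of rank at most $m-2$, via two disjoint injections) into $S_{\!\ta\cup\{a\}\!,\tb}$ by adjoining a chain $[a\,y_\al]$, and for (2) it uses a four-block partition of $X$ to realize $\lam_k\lam_{n-k}$ elements inside a larger balanced null semigroup and then exhibits more than $2\lam_{n-k}$ additional elements; both parts need parity case splits and a few numerical checks such as $\lam_{11}=4051>3093=2\lam_{10}+1$. You instead derive everything from the recursion $\lam_n=\lam_{n-1}+\nc\lam_{n-2}$ of Lemma~\ref{lcom1}. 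I have checked your three steps: the auxiliary bound $\lam_m\geq2\lam_{m-1}$ for $m\geq6$ (essentially Lemma~\ref{lcom4}, but you prove it independently of the present lemma, so there is no circularity with the paper's ordering, where Lemma~\ref{lcom4} is deduced from this one); the reduction of (1) to $\lam_n-2\lam_{n-1}\geq\bigl(2(\nc-1)-\lfloor(n-1)/2\rfloor\bigr)\lam_{n-3}\geq3\lam_{n-3}>1$ for $n>10$; and the cancellation in (2), which, writing $M_m=\lam_m+1$ and $j=n-k$, really does leave $(\nc-\lfloor j/2\rfloor)M_kM_{j-2}+\lfloor j/2\rfloor M_k\geq\nc$, settled by $\nc-\lfloor j/2\rfloor\geq\lfloor k/2\rfloor$ and $2(\lfloor k/2\rfloor+\lfloor j/2\rfloor)\geq\nc$. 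The trade-off is that your route is uniform and essentially parity-free, but it genuinely requires the base cases $(k,j)\in\{(10,10),(10,11),(11,11)\}$ and hence values of $\lam$ up to $\lam_{22}$; these do check out with room to spare (for instance, iterating $\lam_m\geq\lfloor m/2\rfloor\lam_{m-2}$ gives $\lam_{20}\geq10\cdot9\cdot8\cdot7\cdot6\,\lam_{10}=30240\cdot1546>1547^2=(\lam_{10}+1)^2$, and similarly for the other two), whereas the paper's construction for (2) avoids any base-case computation.
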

\begin{proof}
To prove (1), fix $a\in X$ and consider a partition $\{A,B\}$
of $X-\{a\}$ such that $|A|=\left\lfloor\frac{n-1}2\right\rfloor$ and $|B|=(n-1)-|A|$.
Note that $\lam_{n}=|S_{\!\ta\cup\{a\}\!,\tb}|$ and  $\lam_{n-1}=|S_{\!\ta,\tb}|$.
We will consider two cases.
\vskip 1mm
\noindent{\bf Case 1.} $n$ is even.
\vskip 1mm
In this case $|B|= |A|+1$, hence for every $\al\in S_{\!\ta,\tb}$, we can select an element 
$y_\al\in B-\ima(\al)$. 
Then the mapping $\phi:S_{\!\ta,\tb}\to S_{\!\ta\cup\{a\}\!,\tb}$ defined by
$\al\phi=\al\jo[a\,y_\al]$ is one-to-one with $\ima(\phi)\subseteq S_{\!\ta\cup\{a\}\!,\tb}-S_{\!\ta,\tb}$.
Since $n>10$, we can select $y_1,y_2\in B$ such that $y_1,y_2\ne y_\al$
where $\al=0$. Then $[a\,y_1],[a\,y_2]\in S_{\!\ta\cup\{a\}\!,\tb}-(S_{\!\ta,\tb}\cup\ima(\phi))$,
which implies 
\[
\lam_n=|S_{\!\ta\cup\{a\}\!,\tb}|\geq|S_{\!\ta,\tb}|+|\ima(\phi)|+|\{[a\,y_1],[a\,y_2]\}|=\lam_{n-1}+\lam_{n-1}+2>2\lam_{n-1}+1.
\]

\noindent{\bf Case 2.} $n$ is odd.
\vskip 1mm
Let $m=|A|=|B|=\frac{n-1}{2}$. By direct calculations, $\lam_{11}=4051$ and $2\lam_{10}+1=3093$.
So (1) is true for $n=11$. Suppose $n\geq13$ and note that $m\geq6$.
Denote by $J_{m-2}$ the set of transformations of $S_{\!\ta,\tb}$ of rank
at most $m-2$ and note that
\[
|J_{m-2}|=\lam_{n-1}-m!-\binom{m}{1}^2(m-1)!=\lam_{n-1}-(m+1)m!
\]
(since $S_{\!\ta,\tb}$ has $m!$ transformations of rank $m$, and $\binom{m}{1}^2(m-1)!$ transformations of rank $m-1$).
For every $\al\in J_{m-2}$, select two distinct elements $y_\al,z_\al\in B-\ima(\al)$
(possible since $|B|=m$ and $\rank(\al)\leq m-2$).
Then the mappings $\phi,\psi:J_{m-2}\to S_{\!\ta\cup\{a\}\!,\tb}$ defined by
$\al\phi=\al\jo[a\,y_\al]$ and $\al\psi=\al\jo[a\,z_\al]$ are one-to-one with 
$\ima(\phi)\cup\ima(\psi)\subseteq S_{\!\ta\cup\{a\}\!,\tb}-S_{\!\ta,\tb}$ and $\ima(\phi)\cap\ima(\psi)=\emptyset$.
Therefore, 
\begin{align}
\lambda_n&=|S_{\!\ta\cup\{a\}\!,\tb}|\geq|S_{\!\ta,\tb}|+|\ima(\phi)|+|\ima(\psi)|\notag\\
&=\lam_{n-1}+2(\lambda_{n-1}-(m+1)m!)=2\lam_{n-1}-2(m+1)m!+\lam_{n-1}\notag\\
&>2\lam_{n-1}-2(m+1)m!+\left(\binom{m}{0}^2m!+\binom{m}{1}^2 (m-1)!+\binom{m}{2}^2 (m-2)!\right)\notag\\
&=2\lam_{n-1}-2(m+1)m!+\left(m!+m \cdot m!+\frac{m(m-1)}{4}m!\right)\notag\\
&=2\lam_{n-1}+m!\left(-2m-2+1+m+\frac{m(m-1)}{4}\right)\notag\\
&=2\lambda_{n-1}+\frac{m!}{4}\left(m^2-5m-4\right)>2\lam_{n-1}+1,\notag
\end{align}
where the first strong inequality follows from the fact that $\lam_{n-1}=|S_{\!\ta,\tb}|$, $m\geq6$,
and the expression $\binom{m}{0}^2m!+\binom{m}{1}^2 (m-1)!+\binom{m}{2}^2(m-2)!$ only counts
the transformations in $S_{\!\ta,\tb}$ of ranks $m$, $m-1$, and $m-2$;
and the last strong inequality inequality follows from the fact that for $m\geq6$,
$\frac{m!}{4}\geq180$ and $m^2-5m-4\ge 2$. 

To prove (2), suppose $k\geq10$ and $n-k\geq10$.
We may assume that $k\leq n-k$. 
Consider a partition $\{A,B,C,D\}$ of $X$ such that 
\[
|A|=\left\lfloor\frac{n-k}2\right\rfloor,\,\, |B|=(n-k)-|A|,\,\, |D|=\left\lfloor\frac{k}2\right\rfloor,\,\,|C|=k-|D|.
\]
Then, either $|A|+|C|=\nc$ or $|B|+|D|=\nc$, and so
$\lam_n=|S_{\!\ta\cup\tc,\tb\cup\td}|$, $\lam_{n-k}=|S_{\!\ta,\tb}|$ and $\lam_k=|S_{\tc,\td}|$.

Let $S$ be the subsemigroup of $S_{\!\ta\cup\tc,\tb\cup\td}$ consisting of all $\al$ 
such that $\al|_A\in S_{\!\ta,\tb}$ and $\al|_C\in S_{\tc,\td}$.
We can construct a  bijection between $S$ and  $S_{\!\ta,\tb}\times S_{\tc,\td}$ as in the proof of Lemma \ref{lres3}, hence
$|S|=\lam_{n-k}\lam_k$. Since the inequality in~(2) is equivalent to $\lam_n>\lam_k\lam_{n-k}+\lam_k+\lam_{n-k}$,
it suffices to construct more then $2\lambda_{n-k}\ge\lambda_{n-k}+\lambda_{k}$ elements of
$S_{\!\ta\cup\tc,\tb\cup\td}-S$. We will consider two cases.
\vskip 1mm
\noindent{\bf Case 1.} $n-k$ is odd.
\vskip 1mm
In this case $|B|=|A|+1$, so for each $\al\in S_{\!\ta,\tb}$, we can select an element $b_\al\in B-\ima(\al)$.  
Now, for any pair $(c,\al)\in C\times S_{\!\ta,\tb}$, let $\al_c=\al\jo[c\,b_\al]$. 
It is clear that $\al_c\in S_{\!\ta\cup\tc,\tb\cup\td}-S$ and that the mapping $(\al,c)\to\al_c$ is one-to-one.
Since $k\geq10$, we have $|C|=k-\left\lfloor\frac{k}2\right\rfloor\ge 5$.
Thus, we have constructed $|C|\cdot|S_{\!\ta,\tb}|\geq5\lambda_{n-k}>2\lambda_{n-k}$ elements in $S_{\!\ta\cup\tc,\tb\cup\td}-S$.
\vskip 1mm
\noindent{\bf Case 2.} $n-k$ is even.
\vskip 1mm
Let $m=\frac{n-k}{2}$.  Note that for any $\al\in S_{\!\ta,\tb}$ of rank smaller then $m$, we can find $b_\al\in B-\ima(\al)$ 
and define $\al_c$ as in Case~1.  This construction yields $|C|(\lam_{n-k}-m!)\ge5(\lam_{n-k}-m!)$
distinct elements of $S_{\!\ta\cup\tc,\tb\cup\td}-S$. Since $m\geq5$, we have
\[
\lam_{n-k}=|S_{\!\ta,\tb}|>\binom{m}{0}\binom{m}{0}m!+\binom{m}{1}\binom{m}{1}(m-1)!> 2m!,
\]
where the first inequality follows from the fact that $\binom{m}{0}\binom{m}{0}m!+\binom{m}{1}\binom{m}{1}(m-1)!$
only counts the elements of $S_{\!\ta,\tb}$ of rank $m$ and $m-1$. Thus $3\lam_{n-k}>6m!$, and so
\[
|C|(\lam_{n-k}-m!)\ge5(\lam_{n-k}-m!)=3\lambda_{n-k}+2\lambda_{n-k}-5m!>6m!+2\lambda_{n-k}-5m!>2\lambda_{n-k}.
\]
The result follows.
\end{proof}

\begin{lemma}\label{lcom4}
If $n\geq6$, then $\lam_n>2\lam_{n-1}$.
\end{lemma}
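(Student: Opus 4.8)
The plan is to deduce the inequality from the recurrence in Lemma~\ref{lcom1} rather than from the closed formula for $\lam_n$. By that lemma, $\lam_n=\lam_{n-1}+\nc\lam_{n-2}$, so the desired inequality $\lam_n>2\lam_{n-1}$ is equivalent to $\nc\lam_{n-2}>\lam_{n-1}$. Applying Lemma~\ref{lcom1} once more to $\lam_{n-1}$ (legitimate since $n-1\geq4$) gives $\lam_{n-1}=\lam_{n-2}+\nca\lam_{n-3}$, and substituting reduces the goal to the cleaner inequality
\[
(\nc-1)\,\lam_{n-2}>\nca\,\lam_{n-3}.
\]

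First I would set up an induction that advances in steps of two, which is forced by the shape of this reduced inequality: it compares $\lam_{n-2}$ with $\lam_{n-3}$, so the natural hypothesis to feed in is the statement of the lemma itself at index $n-2$, namely $\lam_{n-2}>2\lam_{n-3}$. Since this requires $n-2\geq6$, I would treat $n=6$ and $n=7$ as base cases, verifying them from the values $\lam_5=13$, $\lam_6=34$, $\lam_7=73$ (so $34>26$ and $73>68$); these two cases cover both parities and seed the step-by-two induction for all $n\geq8$.

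For the inductive step, with $n\geq8$, the hypothesis $\lam_{n-2}>2\lam_{n-3}$ together with the positivity of $\nc-1$ gives $(\nc-1)\lam_{n-2}>2(\nc-1)\lam_{n-3}$, so it suffices to check the elementary inequality $2(\nc-1)\geq\nca$. The main (and essentially only) obstacle is this last step, which I would dispatch by splitting on the parity of $n$: for $n=2k$ it reads $2k-2\geq k-1$, and for $n=2k+1$ it reads $2k-2\geq k$, both of which hold comfortably in the range $n\geq8$ (equivalently $k\geq3$ in the odd case). Combining the strict inequality coming from the inductive hypothesis with this bound yields $(\nc-1)\lam_{n-2}>\nca\lam_{n-3}$, the reduced goal. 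I would point out that the reason a one-step induction, or a monotone-ratio argument on $\lam_n/\lam_{n-1}$, is unattractive here is that these ratios oscillate; routing the hypothesis through index $n-2$ is exactly what keeps the argument clean.
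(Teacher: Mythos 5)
Your proof is correct, and it takes a genuinely different route from the paper's. The paper proves the inequality by direct computation of $\lam_n$ and $2\lam_{n-1}$ for $6\leq n\leq 10$ (a table of nine values) and, for $n>10$, by invoking the stronger estimate $\lam_n+1>2(\lam_{n-1}+1)$ from Lemma~\ref{lcom3}(1), whose proof is a fairly involved injection argument on the null semigroups $S_{\!\tk\!,\tl}$ split by the parity of $n$. You instead run everything through the recurrence $\lam_n=\lam_{n-1}+\nc\lam_{n-2}$ of Lemma~\ref{lcom1}: the reduction to $(\nc-1)\lam_{n-2}>\nca\lam_{n-3}$ is correct (both applications of the recurrence are in range, since $n\geq6$ gives $n,n-1\geq4$), the base cases $n=6,7$ check out ($34>26$, $73>68$), and the step-by-two induction closes because $2(\nc-1)\geq\nca$ holds for all $n\geq5$ and $\nc-1>0$. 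There is no circularity, as Lemma~\ref{lcom1} is established independently of this lemma. What your approach buys is self-containment and economy: only two base cases and no reliance on Lemma~\ref{lcom3}, which in the paper is really needed for the sharper bounds in Theorem~\ref{tgen} rather than for this lemma. What the paper's route buys is that Lemma~\ref{lcom3} has to be proved anyway for later use, so deriving the present lemma from it costs essentially nothing beyond the small table; your observation that the ratios $\lam_n/\lam_{n-1}$ oscillate, making a one-step induction awkward, is also accurate and explains why the authors did not take the naive inductive route.
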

\begin{proof}
If $n>10$, then $\lam_n>2\lam_{n-1}+1>2\lam_{n-1}$ by Lemma~\ref{lcom3}.
If $6\leq n\leq10$, then the result can be checked by direct calculations:
\[
\begin{tabular}{|c|c|c|c|c|c|c|c|c|c|}\hline
$n$&2&3&4&5&6&7&8&9&10\\\hline
$\lam_n$&2&3&7&13&34&73&209&501&1546\\\hline
$2\lam_{n-1}$&2&4&6&14&26&68&146&418&1002\\\hline
\end{tabular}
\]
\end{proof}

We begin the proof of Theorem~\ref{tnil} with introducing the following notation.

\begin{nota}\label{nabc}
{\rm 
Let $S$ be any commutative nilpotent subsemigroup of $\mi(X)$.
We define the following subset $C=C(S)$ of $X$:
\begin{equation}\label{e1nabc}
C=\{c\in X:c\in\dom(\al)\cap\ima(\bt)\mbox{ for some $\al,\bt\in S$}\}.
\end{equation}
For a fixed $c\in C$, we define
\begin{align}
A_c&=\{a\in X:a\al=c\mbox{ for some $\al\in S$}\},\notag\\
B_c&=\{b\in X:c\al=b\mbox{ for some $\al\in S$}\}.\notag
\end{align}
Note that $A_c$ and $B_c$ are not empty (by the definition of $C$) and that $A_c\cap B_c=\emptyset$.
(Indeed, if $a\in A_c\cap B_c$, then $a\al=c$ and $c\bt=a$ for some $\al,\bt\in S$, that is,
$\al=[\ldots a\,c\ldots]\jo\cdots$ and $\bt=[\ldots c\,a\ldots]\jo\cdots$. It then follows from
Proposition~\ref{pcen} that $\al\bt\ne\bt\al$,
which is a contradiction.)
}
\end{nota}

In the following lemmas, $S$ is a commutative nilpotent subsemigroup of $\mi(X)$ and
$C$ is the subset of $S$ defined by (\ref{e1nabc}). Our immediate objective is to obtain 
certain bounds on $|A_c|$ and $|B_c|$ (see Lemma~\ref{lbou}).

\begin{lemma}\label{lqabc}
Let $c\in C$, $a\in A_c$, and $b\in B_c$. Then: 
\begin{itemize}
  \item [\rm(1)] There is a unique $q=q(c,a,b)\in C$
such that for all $\al\in S$, if $a\al=c$, then $q\al=b$.
  \item [\rm(2)] For all $\bt\in S$, if $c\bt=b$, then $a\bt=q$, where $q=q(c,a,b)$ is the unique element from {\rm (1)}.
\end{itemize}
\end{lemma}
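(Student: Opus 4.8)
The plan is to produce $q$ by an explicit formula and then read off every required property from a single ``commutation rectangle,'' using repeatedly that elements of $\mi(X)$ are injective. First I would fix witnesses: since $a\in A_c$ there is $\al\in S$ with $a\al=c$, and since $b\in B_c$ there is $\bt\in S$ with $c\bt=b$. I then set the candidate $q:=a\bt$. The very first thing to check is that this is legitimate, i.e.\ that $a\in\dom(\bt)$. This is where commutativity does the work: from $a\al=c$ and $c\in\dom(\bt)$ the composite $a\al\bt=c\bt=b$ is defined, so $a\in\dom(\al\bt)=\dom(\bt\al)$ (using $\al\bt=\bt\al$), whence $a\in\dom(\bt)$ and $q=a\bt$ is defined. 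The same swap gives $q\al=(a\bt)\al=a(\bt\al)=a(\al\bt)=c\bt=b$, so $q\in\dom(\al)$ as well.

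Next I would verify the three assertions of part~(1). Membership $q\in C$ is immediate, since $q\in\ima(\bt)$ and $q\in\dom(\al)$ with $\al,\bt\in S$. For the defining property, let $\al'$ be \emph{any} element of $S$ with $a\al'=c$; the identical domain check shows $a\al'\bt=c\bt=b$ is defined and equals $a\bt\al'=q\al'$, so $q\al'=b$. For uniqueness, if some $q'$ also satisfies $q'\al=b$ for the fixed $\al$, then $q'\al=q\al$ with $q,q'\in\dom(\al)$, and injectivity of $\al$ forces $q=q'$.

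Finally, part~(2) is essentially the well-definedness of $q$ restated: given any $\bt'\in S$ with $c\bt'=b$, the same argument yields $a\in\dom(\bt')$ and $(a\bt')\al=a\al\bt'=c\bt'=b=q\al$; since $\al$ is injective and both $a\bt',q\in\dom(\al)$, we get $a\bt'=q$. I expect the only genuine obstacle to be the bookkeeping of domains --- making sure each composite is actually defined before invoking $\al\bt=\bt\al$ --- together with the systematic use of injectivity to upgrade ``equal images under $\al$'' to ``equal elements.'' There should be no deeper difficulty, because the whole lemma is simply the rectangle $a\xrightarrow{\ \al\ }c$, $c\xrightarrow{\ \bt\ }b$, $a\xrightarrow{\ \bt\ }q$, $q\xrightarrow{\ \al\ }b$ closing up by commutativity, with injectivity guaranteeing that the fourth vertex $q$ is uniquely determined.
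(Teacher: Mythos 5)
Your proof is correct and follows essentially the same route as the paper: both define $q:=a\bt$ for a witness $\bt$ with $c\bt=b$ and close the commuting rectangle $a\mapsto c\mapsto b$, $a\mapsto q\mapsto b$ using $\al\bt=\bt\al$. The only differences are cosmetic --- where the paper invokes Proposition~\ref{pcen} to see that $a\in\dom(\bt)$, you deduce it directly from $\dom(\al\bt)=\dom(\bt\al)$, and you obtain uniqueness from injectivity of $\al$ rather than from the observation that $q=a\bt$ is independent of the choice of $\al$; both variants are valid.
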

\begin{proof}
To prove (1), suppose $\al\in S$ with $a\al=c$, that is, $\al=[\ldots a\,c\ldots]\jo\cdots$. Since $b\in B_c$, $c\bt=b$ for some $\bt\in S$.
Since $c\in\dom(\bt)$, Proposition~\ref{pcen} implies that $a\in\dom(\bt)$. Let $q=a\bt$. Then
$q\al=(a\bt)\al=(a\al)\bt=c\bt=b$. Let $\al'\in S$ be such that $a\al'=c$. By the foregoing argument,
there exists $q'\in X$ such that $a\bt=q'$ and $q'\al'=b$. But then $q=a\bt=q'$, so $q$ is unique. 
Moreover, $q\in C$ since $q\in\dom(\al)\cap\ima(\bt)$.

To prove (2), suppose $\bt\in S$ with $c\bt=b$. Since $a\in A_c$, $a\al=c$ for some $\al\in S$.
But then, by the proof of (1), $a\bt=q$. 
\end{proof}

\begin{lemma}\label{ldif1}
Let $c\in C$, $a,a_1,a_2\in A_c$, and $b,b_1,b_2\in B_c$. Then:
\begin{itemize}
  \item [\rm(a)] If $q(c,a,b_1)=q(c,a,b_2)$, then $b_1=b_2$.
  \item [\rm(b)] If $q(c,a_1,b)=q(c,a_2,b)$, then $a_1=a_2$.
\end{itemize}
\end{lemma}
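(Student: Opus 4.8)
The plan is to read off both statements directly from Lemma~\ref{lqabc}, with the only extra input being the two defining features of a partial injective map: single-valuedness and injectivity. Part (a) will use that each $\al\in S$ is single-valued, and part (b) will use that each $\bt\in S$ is injective; the symmetry between (a) and (b) simply reflects the symmetry between these two features, already mirrored in the symmetric roles of parts (1) and (2) of Lemma~\ref{lqabc}.

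For part (a), I would set $q:=q(c,a,b_1)=q(c,a,b_2)$ and use that $a\in A_c$, so by Notation~\ref{nabc} there is some $\al\in S$ with $a\al=c$. Applying Lemma~\ref{lqabc}(1) to this same $\al$, first with $b=b_1$ and then with $b=b_2$, yields $q\al=b_1$ and $q\al=b_2$. Since $\al$ is a function, the value $q\al$ is uniquely determined, so $b_1=q\al=b_2$.

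For part (b), I would set $q:=q(c,a_1,b)=q(c,a_2,b)$ and use that $b\in B_c$, so there is some $\bt\in S$ with $c\bt=b$. Applying Lemma~\ref{lqabc}(2) to this same $\bt$, first with $a=a_1$ and then with $a=a_2$, gives $a_1\bt=q$ and $a_2\bt=q$ (in particular $a_1,a_2\in\dom(\bt)$). Since $\bt\in\mi(X)$ is injective, $a_1\bt=a_2\bt$ forces $a_1=a_2$.

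I expect no genuine obstacle here: all of the real work has already been absorbed into Lemma~\ref{lqabc}, which packages the commutativity argument (via Proposition~\ref{pcen}) into a well-defined element $q(c,a,b)$ together with the two uniform realizations in its parts (1) and (2). Once one recognizes that (a) and (b) assert exactly the injectivity of the induced correspondences $b\mapsto q(c,a,b)$ and $a\mapsto q(c,a,b)$, each proof collapses to a single evaluation at a witnessing $\al$ or $\bt$. The only small point requiring a moment's care is that such a witness exists, but this is immediate from $a\in A_c$ (respectively $b\in B_c$) by the definitions in Notation~\ref{nabc}.
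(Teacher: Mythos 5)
Your proof is correct and essentially identical to the paper's: for (a) the paper likewise picks $\al\in S$ with $a\al=c$ and concludes $b_1=q\al=b_2$ from Lemma~\ref{lqabc}, and for (b) it just says ``similar,'' which is precisely your injectivity argument via a witness $\bt$ with $c\bt=b$.
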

\begin{proof}
To prove (1), let $q=q(c,a,b_1)=q(c,a,b_2)$.
Since $a\in A_c$, there is $\al\in S$ such that $a\al=c$. But then, by Lemma~\ref{lqabc},
$b_1=q\al=b_2$. 
The proof of (2) is similar.
\end{proof}

We can now prove the lemma concerning the sizes of $A_c$ and $B_c$.

\begin{lemma}\label{lbou}
Suppose $C\ne\emptyset$. Then, there exists $c \in C$ such that one of the following conditions
holds:
\begin{itemize}
\item [\rm(a)] $|A_c|\geq2$ and $|B_c|\geq2$;
\item [\rm(b)] $|A_c|=1$ and $|B_c|\leq\nc$; or
\item [\rm(c)] $|B_c|=1$ and $|A_c|\leq\nc$.
\end{itemize}
\end{lemma}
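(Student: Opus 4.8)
The plan is to argue by contradiction: I assume that \emph{no} $c\in C$ satisfies (a), (b), or (c), and I derive a contradiction. First I would record two elementary facts. Since every nonzero nilpotent is a join of chains, no element of $S$ fixes a point, so $a\in A_c$ forces $a\neq c$; and the membership relations are transitive in the obvious way: if $a\in A_c$ and $c\in A_{c'}$, say $a\al=c$ and $c\bt=c'$ with $\al,\bt\in S$, then $a(\al\bt)=c'$, so $a\in A_{c'}$ (dually for $B$). In particular, if $c\in A_{c'}$ and $A_c\neq\emptyset$, then $A_{c'}$ contains $c$ together with any predecessor of $c$ — two distinct elements — so $|A_{c'}|\geq2$, and dually for $B_{c'}$. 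With these in hand, note that since (a) fails everywhere, each $c\in C$ has $|A_c|=1$ or $|B_c|=1$; since (b) and (c) also fail, a $c$ with $|A_c|=1$ must have $|B_c|>\nc$, and a $c$ with $|B_c|=1$ must have $|A_c|>\nc$. (Here $n\geq3$, because $\{c\}\cup A_c\cup B_c$ consists of at least three distinct elements by Notation~\ref{nabc}, so $\nc\geq1$ and no $c$ can have both sides singletons.) Thus $C=C_A\sqcup C_B$, where $C_A=\{c:|A_c|=1,\ |B_c|>\nc\}$ and $C_B=\{c:|B_c|=1,\ |A_c|>\nc\}$, with $C\neq\emptyset$.

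Next I would rule out the mixed case $C_A\neq\emptyset\neq C_B$. Pick $c\in C_A$ and $c'\in C_B$. Then $B_c$ and $A_{c'}$ are subsets of $X$ with $|B_c|,|A_{c'}|>\nc$, so $|B_c|+|A_{c'}|>n$ and there is some $z\in B_c\cap A_{c'}$. From $c\gam=z$ and $z\del=c'$ (some $\gam,\del\in S$) we get $z\in C$, with $c\in A_z$ and $c'\in B_z$. By the transitivity fact, $A_z$ contains $c$ and a predecessor of $c$ (as $A_c\neq\emptyset$), so $|A_z|\geq2$; likewise $B_z$ contains $c'$ and a successor of $c'$, so $|B_z|\geq2$. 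Hence $z$ satisfies (a), contradicting our standing assumption. Therefore $C=C_A$ or $C=C_B$.

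Finally I would treat the homogeneous case, say $C=C_A$ (the case $C=C_B$ is symmetric, interchanging the roles of $A_c$ and $B_c$, e.g.\ by passing to the inverse semigroup $\{\al^{-1}:\al\in S\}$). Fix any $c\in C$ and write $A_c=\{a\}$. I claim $B_c\cap C=\emptyset$: if $c'\in B_c\cap C$, then $c\in A_{c'}$ and, by transitivity, $a\in A_{c'}$, with $a\neq c$, so $|A_{c'}|\geq2$, contradicting $c'\in C_A$. On the other hand, Lemma~\ref{ldif1}(a) together with Lemma~\ref{lqabc}(1) shows that $b\mapsto q(c,a,b)$ is an injection of $B_c$ into $C$, so $|C|\geq|B_c|$. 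Now $C$ and $B_c$ are \emph{disjoint} subsets of $X$ with $|C|\geq|B_c|>\nc$, whence
\[
n\ \geq\ |C|+|B_c|\ \geq\ 2(\nc+1)\ =\ 2\nc+2\ >\ n,
\]
a contradiction, which completes the proof.

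The hard part, and the step I expect to be least obvious, is the global counting contradiction in the homogeneous case. Purely order-theoretic data about the predecessor and successor sets $A_c,B_c$ is \emph{not} sufficient: one can exhibit abstract posets with a single element $c$ having $|A_c|=1$, $|B_c|$ large, and $C=\{c\}$, which would violate the conclusion. What saves the statement is the commuting ``rectangle'' structure encoded by $q$: through Lemma~\ref{ldif1} it forces $|C|\geq|B_c|$, so the large set $B_c$ is shadowed by an equally large $C$; once one observes that $B_c$ and $C$ are disjoint, their sizes cannot both exceed $\nc$. Recognizing that $B_c\cap C=\emptyset$ in the homogeneous case — and first disposing of the mixed case to guarantee homogeneity — is the key structural insight driving the argument.
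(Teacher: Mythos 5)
Your proof is correct, and while it shares its endgame with the paper's argument, the route to that endgame is genuinely different. Both proofs proceed by contradiction, both culminate in the same counting step (the map $b\mapsto q(c,a,b)$ of Lemmas~\ref{lqabc} and~\ref{ldif1} injects $B_c$ into $C$, and the disjointness of $C$ and $B_c$ forces $n\geq|C|+|B_c|\geq 2(\nc+1)>n$), and both therefore hinge on showing $B_c\cap C=\emptyset$. Where you differ is in how that disjointness is obtained. The paper fixes a single $c$ with $|A_c|=1$, supposes some $b\in B_c\cap C$, and runs a termination argument (using nilpotency) along a descending chain $b=b_0,b_1,\ldots$ inside $B_c\cap C$ to reach an element $b'$ with $A_{b'}\subseteq X-B_c$; the bound $|A_{b'}|\leq\nc$ combined with $|A_{b'}|\geq2$ then contradicts the standing assumption. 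You instead observe that the failure of (a)--(c) everywhere splits $C$ into $C_A\sqcup C_B$, dispose of the mixed case by intersecting the two large sets $B_c$ and $A_{c'}$ to produce a $z\in C$ satisfying (a), and then in the homogeneous case get $B_c\cap C=\emptyset$ in one step from transitivity of the predecessor relation: any $c'\in B_c\cap C$ would have $\{a,c\}\subseteq A_{c'}$, contradicting $|A_{c'}|=1$. This is cleaner --- it avoids the chain construction entirely, and the contradiction at $c'$ is immediate because the dichotomy tells you in advance that $|A_{c'}|=1$ --- at the cost of the extra (easy) mixed-case argument. Both approaches are sound; yours buys a shorter disjointness proof, while the paper's avoids the global case split.
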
  
\begin{proof}
Suppose to the contrary that for every $c\in C$, none of (a)--(c) holds. Let $c\in C$.
Then, since (a) does not hold for $c$, $|A_c|=1$ or $|B_c|=1$. 

Suppose $|A_c|=1$, say $A_c=\{a\}$. 
Then, since (b) does not hold for $c$, $|B_c|>\nc$. Let $b\in B_c$.
We claim that $b\notin C$. 
Suppose to the contrary that $b\in C$. 
Construct elements $b_0,b_1,b_2,\ldots$ in $C\cap B_c$ as follows. Set $b_0=b$. Suppose $b_i\in C\cap B_c$ has been
constructed ($i\geq0$). Let $b_{i+1}$ be any element of $B_c$ such that $b_{i+1}\gam_i=b_i$ for some $\gam_i\in S$.
Then $b_{i+1}\in C$ as $b_{i+1} \in \dom(\gam_i) \cap B_c=\dom(\gam_i)\cap \{c\}S$.
If such an element $b_{i+1}$ does not exist, stop the construction. Note that the construction must stop
after finitely many steps. (Indeed, otherwise, since $X$ is finite, we would have $b_k=b_j$ with $k>j\geq0$.
But then $b_j\gam=b_k\gam=b_j$ for $\gam=\gam_{k-1}\gam_{k-2}\cdots\gam_j\in S$, which is impossible since $S$ consists of nilpotents.)
Thus, there exists $i\geq0$ such that $b_i\in C\cap B_c$ and no element of $B_c$ is mapped to $b_i$
by some transformation in $S$. 

Let $b'=b_i$ and note that $A_{b'}\subseteq X-B_c$.
Since $A_c=\{a\}$, $a\al=c$ for some $\al\in S$. Since $b'\in B_c$,
$c\bt=b'$ for some $\bt\in S$. Let $q=q(c,a,b')$. Then, by Lemma~\ref{lqabc}, $q\al=b'$,
and so $\{c,q\}\subseteq A_{b'}$. If $c\ne q$, then $|A_{b'}|\geq2$. Suppose $c=q$. Then
$a(\al\al)=c\al=q\al=b'$, and so $\{c,a\}\subseteq A_{b'}$. But $a\ne c$ (since $a\al=c$ and $\al$ is a nilpotent),
and we again have $|A_{b'}|\geq2$. On the other hand, since $A_{b'}\subseteq X-B_c$ and $|B_c|>\nc$,
we have $|A_{b'}|\leq\nc$. But $b'\in C$ with $2\leq|A_{b'}|\leq\nc$ contradicts our assumption (see the first sentence of the proof).

The claim has been proved. Hence, no element of $B_c$ is in $C$, that is, $C\subseteq X-B_c$. 
Now, by Lemma \ref{lqabc}, for each $b_i \in B_c$, there exists $q_i=q(c,a,b_i)\in C$ such that $a\in A_{q_i}$
and $b_i\in B_{q_i}$. Moreover, by Lemma~\ref{ldif1}, $q_i\ne q_j$ if $i\ne j$. But this is a contradiction
since $|B_c|>\nc>|X-B_c|\geq|C|$.

If $|B_c|=1$, we obtain a contradiction in a similar way. This concludes the proof.
\end{proof}

We continue the proof of Theorem~\ref{tnil} by considering two cases. First, we suppose that $S$ is a commutative
semigroup of nilpotents such that $C=\emptyset$, that is, there is no $c\in X$
such that $c\in\dom(\al)\cap\ima(\bt)$ for some $\al,\bt\in S$. Note that this implies
that each nonzero element of $S$ is a nilpotent of index $2$. 

\begin{prop}\label{pcase1}
Let $X$ be a set with $n\geq2$ elements and let $m=\nc$. Let $S$ be a commutative nilpotent subsemigroup
of $\mi(X)$ with $C=\emptyset$.
Suppose
$S\ne S_{\!\tk\!,\tl}$ for every balanced null semigroup $S_{\!\tk\!,\tl}$ (see {\rm Definition~\ref{dbal}}).
Then $|S|<\sum_{r=0}^m\binom{m}{r}\binom{n-m}{r}r!$.
\end{prop}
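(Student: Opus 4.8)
The plan is to analyze the structure forced by the hypothesis $C = \emptyset$. Since no point lies in both some domain and some image, every nonzero $\al \in S$ satisfies $\spa(\al^2) = \emptyset$ except trivially, i.e.\ each nonzero element has index $2$, so by the characterization in Section~2, every nonzero $\al \in S$ is a join of chains of length $1$: $\al = [x_1\,y_1]\jo\cdots\jo[x_r\,y_r]$. The key structural observation is that $C = \emptyset$ means the set $D = \bigcup_{\al \in S}\dom(\al)$ and the set $I = \bigcup_{\al \in S}\ima(\al)$ are \emph{disjoint}. Indeed if $c \in D \cap I$ then $c \in \dom(\al) \cap \ima(\bt)$ for some $\al,\bt \in S$, which is exactly $c \in C$. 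So setting $K = D$ and $L = I$ (and distributing the remaining points of $X$ arbitrarily to balance), every element of $S$ sends points of $K$ into $L$, which says precisely that $S \subseteq S_{\!\tk\!,\tl}$ for this partition.

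Once $S \subseteq S_{\!\tk\!,\tl}$ is established, the order bound follows from Lemma~\ref{lmax} together with the inequality Lemma~\ref{lcom2}, which shows that the quantity $\sum_{r=0}^{a}\binom{a}{r}\binom{n-a}{r}r!$ strictly increases as $a$ moves toward $\nc$. So the \emph{balanced} choice $|K| = \nc$ (equivalently $m = \nc$) maximizes $|S_{\!\tk\!,\tl}|$, giving $|S| \le |S_{\!\tk\!,\tl}| \le \lam_n = \sum_{r=0}^m \binom{m}{r}\binom{n-m}{r}r!$. The remaining work is to upgrade this weak inequality to the \emph{strict} inequality claimed, using the hypothesis that $S$ is not equal to any balanced null semigroup.

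The strictness argument splits naturally. First I would let $\{K,L\}$ be the partition with $K = D$, $L \supseteq I$ built as above, with the free points of $X$ assigned to make the partition as balanced as possible. If this partition is already balanced, i.e.\ $|K| = \nc$ or $|L| = \nc$, then $S \subseteq S_{\!\tk\!,\tl}$ with $S_{\!\tk\!,\tl}$ balanced; since $S \ne S_{\!\tk\!,\tl}$ by hypothesis, the inclusion is proper, so $|S| < |S_{\!\tk\!,\tl}| = \lam_n$. If the partition is \emph{not} balanced, then $|K| = a$ with $a \ne \nc$; here Lemma~\ref{lcom2} gives strict inequality between the unbalanced count and the balanced count $\lam_n$, so even $|S| \le |S_{\!\tk\!,\tl}| < \lam_n$ suffices regardless of whether the inclusion is proper.

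The main obstacle I anticipate is handling the degrees of freedom in assigning the ``free'' points of $X$ (those in neither $D$ nor $I$) to $K$ or $L$, and making sure the partition can genuinely be chosen balanced whenever $|D| \le \nc \le n - |I|$ (using $D \cap I = \emptyset$). One must check that $D$ and $I$ are small enough that a balanced split containing $D \subseteq K$ and $I \subseteq L$ exists; since $D \cap I = \emptyset$ we have $|D| + |I| \le n$, and if neither $D$ nor $I$ already exceeds $\nc$ the balancing is possible, while if one of them does exceed $\nc$ we land in the unbalanced case and invoke Lemma~\ref{lcom2} directly. The bookkeeping connecting ``which case the partition falls into'' with ``whether we need the hypothesis $S \ne S_{\!\tk\!,\tl}$ or just Lemma~\ref{lcom2}'' is the delicate part; everything else is a direct application of the already-established lemmas.
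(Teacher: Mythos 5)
Your proposal is correct and takes essentially the same route as the paper's proof: both observe that $C=\emptyset$ forces $S\subseteq S_{\!\tk\!,\tl}$ for a partition separating the union of domains from the union of images, then obtain strictness either from proper inclusion in a balanced $S_{\!\tk\!,\tl}$ (via the hypothesis) or from iterated application of Lemma~\ref{lcom2} when the containing null semigroup is unbalanced. The only cosmetic difference is that the paper puts all non-domain points into $L$ and disposes of the case $|D|>\nc$ by passing to the semigroup of inverses, whereas you redistribute the free points to balance the partition when possible (note that for odd $n$ the correct dichotomy is whether $\min\{|K|,|L|\}=\nc$, not whether $|D|$ or $|I|$ exceeds $\nc$, but your second paragraph already states it that way).
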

\begin{proof}
Let $A=\{x\in X:x\in\dom(\al)\mbox{ for some $\al\in S$}\}$ and $B=X-A$.
Since $C=\emptyset$, we have $A\cap\{y\in X:y\in\ima(\bt)\mbox{ for some $\bt\in S$}\}=\emptyset$,
and so $S\subseteq S_{\!\ta,\tb}$.

Suppose $|A|=m$. Then $S\ne S_{\!\ta,\tb}$ by the assumption, and so, by Lemma~\ref{lmax},
$|S|<|S_{\!\ta,\tb}|=\sum_{r=0}^m\binom{m}{r}\binom{n-m}{r}r!$.
Suppose $|A|<m$. Let $a=|A|$ and $b=|B|=n-a$. By Lemma~\ref{lmax} again,
\[
|S|\leq|S_{\!\ta,\tb}|=\sum_{r=0}^{a}\binom{a}{r}\binom{b}{r}r!.
\]
Applying Lemma~\ref{lcom2} $m-a$ times, we obtain
\[
|S|\leq\sum_{r=0}^{a}\binom{a}{r}\binom{b}{r}r!<\sum_{r=0}^m\binom{m}{r}\binom{n-m}{r}r!.
\]
Suppose $|A|>m$. Consider the semigroup $S'=\{\al^{-1}:\al\in S\}$ and note that $S'$ is a nilpotent
commutative semigroup with $C=C(S')=\emptyset$ and the corresponding set $A'$ included in the original set $B$.
Since $|A'|\leq m$, $|S|=|S'|<\sum_{r=0}^m\binom{m}{r}\binom{n-m}{r}r!$ by the foregoing argument.
\end{proof}

Second, we suppose that $S$ is a commutative nilpotent
subsemigroup of $\mi(X)$ such that $C\ne\emptyset$. Note that this is possible only if $n\geq3$.
Fix $c\in C$ that satisfies one of the conditions (1)--(3) from Lemma~\ref{lbou}.
Our objective is to prove that for all $n\geq3$,
\begin{equation}\label{eqsl1}
|S|\leq\lam_n=\sum_{r=0}^{\nc}\binom{\nc}{r}\binom{n-\nc}{r}r!.
\end{equation}

We will proceed by strong induction on $n=|X|$. Let $n=3$. Then the maximal commutative nilpotent semigroups in $\mi(X)$
are the balanced null semigroups $\{0,[i\,j],[i\,k]\}$ and  $\{0,[i\,k],[j\,k]\}$, and the cyclic
semigroups $\{0,[i\,j\,k],[i\,k]\}$, where $i,j,k$ are fixed, pairwise distinct, elements of $X$.
Thus (\ref{eqsl1}) is true for $n=3$.
\vskip 1mm
\noindent{\bf Inductive Hypothesis.} Let $n\geq4$ and suppose that (\ref{eqsl1}) is true whenever $3\leq|X|<n$.
\vskip 1mm
Consider the following subset of $S$:
\begin{equation}\label{eqsl2}
S_c=\{\al\in S:c\in\spa(\al)\}.
\end{equation}
Then $S-S_c$ is a commutative nilpotent subsemigroup of $\mi(X-\{c\})$.
If there is no $d\in X-\{c\}$ such that $d\in\dom(\al)\cap\ima(\bt)$ for some $\al,\bt\in S-S_c$,
then $|S-S_c|\leq \lam_{n-1}$ by Proposition~\ref{pcase1}. If such a $d\in X-\{c\}$ exists, then
$|S-S_c|\leq\lam_{n-1}$ by the inductive hypothesis. Thus, at any rate,
\begin{equation}\label{eqsl3}
|S-S_c|\leq\lam_{n-1}.
\end{equation}

We now want to find a suitable upper bound for the size of $S_c$ (Lemma~\ref{lbound}). To this end, we will map $S_c$ onto
a commutative subset $S_c^*$ of $\mi(X-\{c\})$ and analyze the preimages of the elements of $S_c^*$.

\begin{defi}\label{dscs}
{\rm 
For $\al \in S_c$ with $c\in\ima(\al)$, let $U_\al$ be the smallest subset of $X$ containing 
$c\al^{-1}$ and closed under all transformations $\gam^{-1}$ and 
$\al\del\al^{-1}$, where $\gam,\del\in S_c$.

For $\al \in S_c$ with $c\in\dom(\al)$, let $D_\al$ be the smallest subset of $X$ containing 
$c$ and closed under all transformations $\gam^{-1}$ and 
$\al\del\al^{-1}$, where $\gam,\del\in S_c$.

For $\al \in S_c$, define $\als\in\mi(X-\{c\})$ as follows:
\[
\als=
\left\{\begin{array}{ll}
\al|_{X-U_\al} & \mbox{if $c\in\ima(\al)-\dom(\al)$,}\\
\al|_{X-D_\al} & \mbox{if $c\in\dom(\al)-\ima(\al)$,}\\
\al|_{X-(D_\al\cup\, U_\al)} & \mbox{if $c\in\dom(\al)\cap\ima(\al)$.}
\end{array}\right.
\]
Let $S_c^*=\{\als:\al\in S_c\}$ and note that $S_c^*$ is a subset of $\mi(X-\{c\})$.
}
\end{defi}

We will need the following lemma about the sets $U_\al$ and $D_\al$.

\begin{lemma}\label{lpre}
Let $\al,\bt\in S_c$. Then:
\begin{itemize} 
  \item [\rm(1)] If $c\in\ima(\al)$, then $U_\al\subseteq\dom(\al)$. Moreover, if $c\in\ima(\bt)$ and
$c\al^{-1}=c\bt^{-1}$, then $U_\al=U_\bt$ and $x\al=x\bt$ for all $x\in U_\al$. 
  \item [\rm(2)] If $c\in\dom(\al)$, then $D_\al\subseteq\dom(\al)$. Moreover, if $c\in\dom(\bt)$ and
$c\al=c\bt$, then $D_\al =D_\bt$ and $x\al=x\bt$ for all $x\in D_\al$. 
  \item  [\rm(3)] If $c\in\dom(\al)\cap\ima(\al)$, then $D_\al=U_\al$. 
Moreover, if $c\in\ima(\bt)$ and $c\al^{-1}=c\bt^{-1}$, then $c\in\dom(\bt)$, $U_\al=U_\bt=D_\bt$,
and $x\al=x\bt$ for all $x\in U_\al$.
If $c\in\dom(\bt)$ and $c\al=c\bt$, then $c\in\ima(\bt)$, $U_\al=U_\bt=D_\bt$,
and $x\al=x\bt$ for all $x\in U_\al$.
\end{itemize}
\end{lemma}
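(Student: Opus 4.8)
We have $S$ a commutative nilpotent subsemigroup of $\mi(X)$, a fixed $c \in C$, and $S_c = \{\al \in S : c \in \spa(\al)\}$. For $\al \in S_c$ with $c \in \ima(\al)$, the set $U_\al$ is the smallest subset of $X$ containing $c\al^{-1}$ and closed under all $\gam^{-1}$ and all $\al\del\al^{-1}$ (for $\gam, \del \in S_c$). Dually $D_\al$ (when $c \in \dom(\al)$) is the smallest set containing $c$, closed under the same operations. The lemma makes three claims: (1) $U_\al \subseteq \dom(\al)$ and $U_\al$ depends only on $c\al^{-1}$ (and $\al$ agrees with any $\bt$ having the same value there, on $U_\al$); (2) the dual for $D_\al$; (3) when $c$ is in both domain and image, $D_\al = U_\al$, with the cross-compatibility statements.

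Let me think about what I'd actually prove.

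**Part (1).** I need $U_\al \subseteq \dom(\al)$. Since $U_\al$ is built by closing $\{c\al^{-1}\}$ under the operations, I'd argue by induction on the construction: the seed $c\al^{-1}$ is in $\dom(\al)$ (indeed $c \in \ima(\al)$ means $c\al^{-1}$ is defined). Then I must show $\dom(\al)$ is invariant under $\gam^{-1}$ and $\al\del\al^{-1}$ for $\gam, \del \in S_c$. For invariance under $\gam^{-1}$: if $x \in \dom(\al)$ and $x\gam^{-1}$ is defined, I want $x\gam^{-1} \in \dom(\al)$. Here is where commutativity enters: by Lemma~\ref{lclo}, $\al\gam = \gam\al$ gives $(\dom(\al))\gam^{-1} \subseteq \dom(\al)$, which is exactly what I need. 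For invariance under $\al\del\al^{-1}$: if $x \in \dom(\al)$, then $x\al \in \ima(\al)$, and I need $x\al\del\al^{-1} \in \dom(\al)$, i.e. $(x\al\del) \in \ima(\al)$; but Lemma~\ref{lclo} with $\al\del = \del\al$ gives $(\ima\al)\del \subseteq \ima(\al)$, so $x\al\del \in \ima(\al) = (\dom\al)\al$, whence applying $\al^{-1}$ lands back in $\dom(\al)$. So $U_\al \subseteq \dom(\al)$. For the "moreover," if $c\al^{-1} = c\bt^{-1}$, the two seeds coincide; I'd show $U_\al$ and $U_\bt$ satisfy the same closure because I can prove $x\al = x\bt$ holds on the common closure set by a simultaneous induction: the seed agrees, and the closure operations $\gam^{-1}, \al\del\al^{-1}$ versus $\gam^{-1}, \bt\del\bt^{-1}$ transport agreement forward, using that $\al, \bt$ agree on the points already produced. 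The main subtlety is that the operations defining $U_\bt$ use $\bt$, not $\al$, so I must interleave the "sets are equal" and "values agree" claims and push them through the inductive closure together.

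**Part (2) and Part (3).** Part (2) is the exact formal dual of (1) under the involution $\al \mapsto \al^{-1}$ (which sends domains to images, $S_c$ to itself since $c \in \spa(\al) \miff c \in \spa(\al^{-1})$, and swaps the roles of the two closure operations), so I would say it follows by the symmetric argument. For (3), when $c \in \dom(\al) \cap \ima(\al)$, I'd show $D_\al = U_\al$ by proving each contains the other's seed and both are closed under the same operations: the seed $c$ of $D_\al$ equals $(c\al^{-1})\al$, so $c \in U_\al$ via the operation $\al\del\al^{-1}$ with a suitable $\del$ (or directly since $U_\al$ is $\gam^{-1}$-closed and $\al$-related), and conversely $c\al^{-1} \in D_\al$ since $D_\al$ is $\gam^{-1}$-closed and contains $c$; as both sets are the smallest closed sets containing a point of the other, minimality forces equality. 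The cross-statements (if $c \in \ima(\bt)$ with $c\al^{-1} = c\bt^{-1}$, then $c \in \dom(\bt)$, etc.) I'd handle by invoking Proposition~\ref{pcen}: since $c$ lies on a chain of $\al$ with $c$ both a non-initial and non-terminal point, the commuting condition forces the analogous chain structure on $\bt$, so $c \in \dom(\bt)$ as well, and then the value agreement reduces to the "moreover" clauses of (1) and (2).

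**The main obstacle.** I expect the hard part to be the "moreover" clauses — establishing simultaneously that the closure sets coincide and that $\al, \bt$ agree pointwise on them. The difficulty is that $U_\al$ and $U_\bt$ are defined by closure under operations built from $\al$ and $\bt$ respectively, so one cannot simply compare the two closures directly; the cleanest route is a joint induction on the closure process showing that, given agreement on the already-constructed points, each closure step produces the same new point with $\al$ and $\bt$ still agreeing there. Carrying this interleaved induction carefully — and correctly invoking Lemma~\ref{lclo} at each closure step to stay inside $\dom(\al)$ — is the technical crux, whereas the containment $U_\al \subseteq \dom(\al)$ and the duality for~(2) are comparatively routine.
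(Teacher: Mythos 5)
Your overall architecture matches the paper's: the containment $U_\al\subseteq\dom(\al)$ via Lemma~\ref{lclo} (seed in $\dom(\al)$, then invariance of $\dom(\al)$ under $\gam^{-1}$ and under $\al\del\al^{-1}$), the ``moreover'' clauses by induction on the closure construction, and part (3) by showing each of $D_\al$, $U_\al$ contains the other's seed (take $\gam=\del=\al$, so $c\al^{-1}\in D_\al$ and $c=(c\al^{-1})(\al\al\al^{-1})\in U_\al$) and appealing to minimality, with the cross-statements then following from (1) and (2). Those pieces are fine. (One small correction: part (2) is not the image of part (1) under $\al\mapsto\al^{-1}$ — $D_\al$ is closed under the \emph{same} operations $\gam^{-1}$ and $\al\del\al^{-1}$ as $U_\al$, only the seed changes from $c\al^{-1}$ to $c$ — so it is proved by repeating the argument, not by a formal involution.)

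The gap is in the one step you yourself call the crux: you assert that the closure operations ``transport agreement forward, using that $\al,\bt$ agree on the points already produced,'' but you never say how, and this is not automatic. Two distinct things must be supplied in the inductive step. First, if $x=y\gam^{-1}$ with $y\al=y\bt$ already known, the conclusion $x\al=x\bt$ does not follow from agreement at $y$ alone; it needs the rigidity statement of Lemma~\ref{lqabc} applied with $y\in C$, $x\in A_y$, $y\al\in B_y$ (equivalently, the computation $(x\al)\gam=x(\gam\al)=y\al=y\bt=(x\bt)\gam$ together with injectivity of $\gam$ — commutativity of $S$ enters again here, not just the inductive hypothesis). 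Second, and more seriously, when $x=y(\bt\del\bt^{-1})$ it is not even clear that $x\in U_\al$ at all, since $U_\al$ is closed under $\al\del\al^{-1}$, not under $\bt\del\bt^{-1}$; one must first prove $y(\bt\del\bt^{-1})=y(\al\del\al^{-1})$, which the paper obtains by applying Lemma~\ref{lqabc} to $y\al\in C$, $y\in A_{y\al}$ and $p=(y\al)\del\in B_{y\al}$ to conclude $p\al^{-1}=p\bt^{-1}$. Without naming this mechanism — Lemma~\ref{lqabc} is precisely where the sets $A_c$, $B_c$ and the element $q(c,a,b)$ earn their keep — the interleaved induction you describe cannot be completed.
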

\begin{proof}
To prove (1), suppose $c\in\ima(\al)$ and let  $a=c\al^{-1}$. Then clearly $a\in\dom(\al)$.
By Lemma~\ref{lclo}, $\dom(\al)$ is closed under $\gam^{-1}$ for all $\gam\in S_c$.
Let $x\in\dom(\al)$ and $\del\in S_c$ be such that $x(\al\del\al^{-1})$ is defined.
Since $x\al\in\ima(\al)$, we have $(x\al)\del\in\ima(\al)$ by Lemma~\ref{lclo},
and so $x(\al\del\al^{-1})=((x\al)\del)\al^{-1}\in\dom(\al)$. Thus $\dom(\al)$ is also closed
under $\al\del\al^{-1}$ for all $\del\in S_c$. It follows that $U_\al\subseteq\dom(\al)$.

Suppose $c\in\ima(\bt)$ and $c\al^{-1}=c\bt^{-1}$. Let $a=c\al^{-1}=c\bt^{-1}$. Let $x\in U_\bt$. We will prove that $x\in U_\al$ and 
$x\al=x\bt$ by induction on the minimum number of steps needed to generate $x$ from $a$.

If $x=a$, then $x\in U_\al$ and $x\al=x\bt$ since $x=a=c\bt^{-1}=c\al^{-1}$. Suppose 
$x=y\gam^{-1}$ for some $y\in U_\bt$ and $\gam\in S_c$. Then 
$y\in U_\al$ and $y\al=y\bt$ by the inductive hypothesis. Then $x=y\gam^{-1}\in U_\al$ by the definition of $U_\al$.
Further, $y\in C$ (since $y\in\dom(\al)\cap\ima(\gam)$), $x\in A_y$ (since $x\gam=y$), and $y\al\in B_y$. Since we also have $y\bt=y\al$,
Lemma~\ref{lqabc} implies
\[
x\al=q(y,x,y\al)=q(y,x,y\bt)=x\bt.
\]
Finally, suppose
$x=y(\bt\del\bt^{-1})$ for some $y\in U_\bt$ and $\del\in S_c$. Then
$y\in U_\al$ and $y\al=y\bt$ by the inductive hypothesis. Let $p=y(\al\del)$.
Then $y\al\in C$ (since $y\al\in\dom(\del)\cap\ima(\al))$), $y\in A_{y\al}$, and $p\in B_{y\al}$ (since $(y\al)\del=p$). Again, since $y\bt=y\al$,
Lemma~\ref{lqabc} implies
\[
p\al^{-1}=q(y\al,y,p)=q(y\bt,y,p)=p\bt^{-1}.
\]
Then $x=y(\bt\del\bt^{-1})=(y(\al\del))\bt^{-1}=p\bt^{-1}=p\al^{-1}=y(\al\del\al^{-1})$. It follows
that $x\in U_\al$ and $x\al=p=x\bt$. We have proved that $U_\bt\subseteq U_\al$ and $x\al=x\bt$ for all $x\in U_\bt$.
By symmetry, $U_\al\subseteq U_\bt$ and $x\al=x\bt$ for all $x\in U_\al$. We have proved (1). The proof of (2) is similar.

To prove (3), suppose $c\in\dom(\al)\cap\ima(\al)$, say $c\al=b$ and $a\al=c$. 
Then $a=c\al^{-1}\in D_\al$ and $c=a\al\al\al^{-1}\in U_\alpha$. Hence $U_\al=D_\al$ by the definitions
of $U_\al$ and $D_\al$. The remaining claims in (3) follow from (1) and (2).
\end{proof}

\begin{lemma}\label{lsccom}
Any two transformations in $S_c^*$ commute.
\end{lemma}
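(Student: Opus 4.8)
The plan is to exploit the fact that $\als$ and $\bts$ are restrictions of the commuting pair $\al,\bt$, so that verifying $\als\bts=\bts\als$ collapses to matching the domains of the two products. Write $\als=\al|_{X-R_\al}$ and $\bts=\bt|_{X-R_\bt}$, where by Definition~\ref{dscs} each of $R_\al,R_\bt$ is one of the sets $U$, $D$, or $U\cup D$. The one structural fact I would record up front is that, in all three cases, $R_\al\subseteq\dom(\al)$ (by Lemma~\ref{lpre}) and $R_\al$ is closed under $\gam^{-1}$ and under $\al\del\al^{-1}$ for every $\gam,\del\in S_c$, with the analogous statement for $R_\bt$ (replacing $\al$ by $\bt$). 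This is immediate from the definitions of $U_\al$ and $D_\al$ as smallest closed sets, together with the observation that a union of two sets closed under a given family of partial maps is again closed. Isolating these two closure properties once is what makes the three-case split of Definition~\ref{dscs} disappear in the rest of the argument.

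Next I would reduce the claim to a domain computation. Since $\als\subseteq\al$, $\bts\subseteq\bt$, and $\al\bt=\bt\al$, at any point where both products are defined we have $x(\als\bts)=(x\al)\bt=(x\bt)\al=x(\bts\als)$; moreover $\dom(\als\bts)$ and $\dom(\bts\als)$ are both contained in $D:=\dom(\al\bt)=\dom(\bt\al)$. Hence it suffices to prove that the two products have the same domain, i.e. that for every $x\in D$,
\[
x\notin R_\al\ \text{and}\ x\al\notin R_\bt\quad\Longleftrightarrow\quad x\notin R_\bt\ \text{and}\ x\bt\notin R_\al,
\]
equivalently, after negating,
\[
x\in R_\al\ \text{or}\ x\al\in R_\bt\quad\Longleftrightarrow\quad x\in R_\bt\ \text{or}\ x\bt\in R_\al.
\]

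For the forward implication I would split into two cases. If $x\in R_\al$, then since $x\in D$ and Lemma~\ref{lclo} gives $(\ima\al)\bt\subseteq\ima(\al)$, the element $x(\al\bt\al^{-1})$ is defined, and because $\al\bt=\bt\al$ it simplifies to $x\bt$; closure of $R_\al$ under $\al\del\al^{-1}$ with $\del=\bt\in S_c$ then forces $x\bt\in R_\al$. If instead $x\al\in R_\bt$, then applying $\al^{-1}$ and using closure of $R_\bt$ under $\gam^{-1}$ with $\gam=\al\in S_c$ gives $x=(x\al)\al^{-1}\in R_\bt$. In either case the right-hand disjunction holds, and the reverse implication follows by the identical argument with the roles of $\al$ and $\bt$ interchanged. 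This yields $\dom(\als\bts)=\dom(\bts\als)$, and with the value computation above we conclude $\als\bts=\bts\als$.

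The main obstacle is not any single calculation but keeping the bookkeeping uniform across the three cases; folding all of it into the abstract "removed set" $R_\al$ and its two closure properties is the organizing device. The genuinely delicate step is the identity $x(\al\bt\al^{-1})=x\bt$: it needs Lemma~\ref{lclo} precisely to guarantee that $(x\al)\bt$ lands back in $\ima(\al)$ so that the trailing $\al^{-1}$ can be applied, and this is exactly why the closure defining $U_\al$ and $D_\al$ was built from the operators $\gam^{-1}$ and $\al\del\al^{-1}$ rather than from anything simpler.
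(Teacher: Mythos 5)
Your proof is correct and follows essentially the same route as the paper's: the paper also reduces the problem to showing that $x(\als\bts)$ and $x(\bts\als)$ are undefined at the same points of $\dom(\al\bt)$, and handles the two failure modes exactly as you do, via closure of the removed set under $\al\del\al^{-1}$ (using Lemma~\ref{lclo} to make $x(\al\bt\al^{-1})$ legitimate) and under $\gam^{-1}$. Your device of folding the three cases of Definition~\ref{dscs} into a single removed set $R_\al$ with the two closure properties is a mild streamlining of the paper's bookkeeping, not a different argument.
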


\begin{proof}
Let $\al,\bt \in S_c$. We want to prove that $\als\bts=\bts\als$. 
Let $x\in X-\{c\}$. Since $\al\bt=\bt\al$, both $\al\bt$ and $\bt\al$ are either defined at $x$ or undefined at $x$.
In the latter case, both $\als\bts$ and $\bts\als$ are undefined at $x$.

So suppose that $x(\al\bt)=x(\bt\al)$ exists.
If both $\als\bts$ and $\bts\als$ are defined at $x$, then $x(\als\bts)=x(\al\bt)=x(\bt\al)=x(\bts\als)$.
Hence, it suffices to show that 
\[
x(\als\bts)\mbox{ is undefined}\miff x(\bts\als)\mbox{ is undefined}.
\]
By symmetry, we may suppose that that $x(\als\bts)$ is undefined. We consider two possible cases.
\vskip 1mm
\noindent{\bf Case 1.} $x\als$ is undefined.
\vskip 1mm
Since we are working under the assumption that $x(\al\bt)$ exists (and so $x\al$ exists),
it follows from Definition~\ref{dscs} and Lemma~\ref{lpre} that $x\in K$, where $K=U_\al$ or $K=D_\al$. 
Since $x(\al\bt)$ exists, it is in $\ima(\al)$ by Lemma~\ref{lclo}, so $x(\al\bt\al^{-1})$ exists.
Hence $x(\al\bt\al^{-1})\in K$ by the definitions of $U_\al$ and $D_\al$. We have
$x(\al\bt\al^{-1}) =(x\bt\al)\al^{-1}=x\bt$, and so $x\bt\in K$. Thus $(x\bt)\als$ is undefined, and so 
$x(\bts\als)$ is undefined.
\vskip 1mm
\noindent{\bf Case 2.} $x\als$ is defined and $(x\als)\bts$ is undefined.
\vskip 1mm
This can only happen when $x\als=x\al$ is in $K$, where $K=U_\beta$ or $K=D_\bt$.
By the definitions of $U_\bt$ and $D_\bt$,
$x=(x\al)\al^{-1}\in K$ as well.  But then $x\bts$ is undefined, and hence
$x(\bts\als)$ is also undefined. 
\end{proof}

\begin{lemma}\label{lemp}
Let $\al\in S_c$. Then:
\begin{itemize}
  \item [\rm(1)] If $c\in\ima(\al)$, then $\spa(\als)\cap B_c=\emptyset$. 
  \item [\rm(2)] If $c\in\dom(\al)$, then $\spa(\als)\cap A_c=\emptyset$. 
\end{itemize}
\end{lemma}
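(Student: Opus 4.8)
The plan is to prove (1) in detail and then to read off (2) by a dual (and in fact shorter) argument. Throughout fix $\al\in S_c$. For (1) we have $c\in\ima(\al)$, and I note first that $\als=\al|_{X-U_\al}$ in both relevant cases of Definition~\ref{dscs}: this is the definition when $c\in\ima(\al)-\dom(\al)$, and when $c\in\dom(\al)\cap\ima(\al)$ it holds because $D_\al=U_\al$ by Lemma~\ref{lpre}(3). Consequently $\dom(\als)=\dom(\al)-U_\al$ and $\ima(\als)\subseteq\ima(\al)$, so it suffices to show that no $b\in B_c$ lies in $\dom(\als)$ or in $\ima(\als)$. Fix such a $b$, pick $\del\in S$ with $c\del=b$ (so $\del\in S_c$, since $c\in\dom(\del)$), and set $a=c\al^{-1}\in U_\al$.

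First I would rule out $b\in\ima(\als)$: if $b\notin\ima(\al)$ this is automatic, so suppose $b\in\ima(\al)$; then $b\al^{-1}=(a\al)\del\al^{-1}=a(\al\del\al^{-1})\in U_\al$, since $a\in U_\al$ and $U_\al$ is closed under the maps $\al\del\al^{-1}$ with $\del\in S_c$. Hence $b\al^{-1}\notin\dom(\als)$, so $b$ is the $\al$-image of no point of $\dom(\als)$, i.e.\ $b\notin\ima(\als)$. Next I would rule out $b\in\dom(\als)$, assuming $b\in\dom(\al)$ (otherwise there is nothing to show). If $c\notin\dom(\al)$ (the case $c\in\ima(\al)-\dom(\al)$), then $c(\al\del)$ is undefined because $c\al$ is, while commutativity gives $c(\al\del)=c(\del\al)=(c\del)\al=b\al$; thus $b\al$ is undefined, contradicting $b\in\dom(\al)$, so this situation does not arise. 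If $c\in\dom(\al)\cap\ima(\al)$, then since $a\al=c$ and $c\in\dom(\del)$, Proposition~\ref{pcen} forces $a\in\dom(\del)$, and $(a\del)\al=a(\del\al)=a(\al\del)=c\del=b$ shows $b\in\ima(\al)$ with $b\al^{-1}=a\del\in U_\al$. Finally, using $b\in\dom(\al)$ and taking $\del=\al\in S_c$ in the closure rule, $(b\al^{-1})(\al\,\al\,\al^{-1})=(b\al)\al^{-1}=b$, so $b\in U_\al$; hence $b\notin\dom(\al)-U_\al=\dom(\als)$. This proves (1).

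For (2) we have $c\in\dom(\al)$, and now $\als=\al|_{X-D_\al}$ in both cases (again $D_\al=U_\al$ in the overlap case), so $\dom(\als)=\dom(\al)-D_\al$ and $\ima(\als)\subseteq\ima(\al)$. Let $a'\in A_c$ and pick $\eta\in S$ with $a'\eta=c$; then $\eta\in S_c$. Since $c\in D_\al$ by definition and $D_\al$ is closed under $\gam^{-1}$ for $\gam\in S_c$, applying $\eta^{-1}$ gives $a'=c\eta^{-1}\in D_\al$, so $a'\notin\dom(\als)$. If moreover $a'\in\ima(\al)$, then applying $\al^{-1}$ (closure under $\gam^{-1}$ with $\gam=\al$) gives $a'\al^{-1}\in D_\al$, whence $a'\notin\ima(\als)$; and if $a'\notin\ima(\al)$ this is trivial. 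Thus $\spa(\als)\cap A_c=\emptyset$.

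The main obstacle is the final step of the second paragraph: passing from $b\al^{-1}\in U_\al$ to $b\in U_\al$. Backward closure alone cannot reach the successors of $c$, so this is exactly where the $\al\del\al^{-1}$-closure built into $U_\al$ (applied with $\del=\al$) is essential, together with the commutativity bookkeeping from Proposition~\ref{pcen} that guarantees $b\in\ima(\al)$ in the overlap case. By contrast, (2) is immediate because $D_\al$ already contains $c$, so its backward closure under the $\gam^{-1}$ captures all of $A_c$ at once.
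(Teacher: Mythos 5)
Your proof is correct and follows essentially the same route as the paper's: the decisive steps in both are that $b\al^{-1}\in U_\al$ by closure under $\al\del\al^{-1}$ (killing $b\in\ima(\als)$) and then $b=(b\al^{-1})(\al\al\al^{-1})\in U_\al$ (killing $b\in\dom(\als)$). The only cosmetic differences are that the paper obtains $b\in\ima(\al)$ in one stroke from Lemma~\ref{lclo}, which makes your case split on whether $c\in\dom(\al)$ unnecessary, and that where the paper dismisses (2) as ``similar'' you give the direct, slightly shorter argument starting from $c\in D_\al$.
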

\begin{proof}
To prove (1), let $c\in\ima(\al)$ and $b\in B_c$, that is, $c\gam=b$ for some $\gam\in S_c$.
Note that $b\in\ima(\al)$ by Lemma \ref{lclo}. Then, since $c\al^{-1}\in U_\al$, we have
$b\al^{-1}=(c\al^{-1})(\al\gam\al^{-1})\in U_\al$. Thus $b\al^{-1}\notin\dom(\als)$, and so $b\notin\ima(\als)$.
If $b\notin\dom(\al)$, then clearly $b\notin\dom(\als)$. Suppose $b\in\dom(\al)$. We have already
established that $b\al^{-1}\in U_\al$. Thus $b=(b\al^{-1})(\al\al\al^{-1})\in U_\al$, and so $b\notin\dom(\als)$.
We have proved (1). The proof of (2) is similar.
\end{proof}

We can now obtain an upper bound for the size of $S_c$.

\begin{lemma}\label{lbound}
Let $p=|A_c|$ and $t=|B_c|$. Then 
\[
|S_c|\leq(p-1)\lam_{n-t-1}+(t-1)\lam_{n-p-1}+2\lam_{n-p-t-1}.
\]
\end{lemma}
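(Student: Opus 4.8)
\emph{Proof idea.} The plan is to control $|S_c|$ through the correspondence $\al\mapsto\als$ of Definition~\ref{dscs}, organising the count by where $c$ sits in the span of each $\al\in S_c$. First I would split $S_c$ into three parts: $P_1=\{\al\in S_c:c\in\ima(\al)-\dom(\al)\}$, $P_2=\{\al\in S_c:c\in\dom(\al)-\ima(\al)\}$, and $P_3=\{\al\in S_c:c\in\dom(\al)\cap\ima(\al)\}$. To each $\al$ with $c\in\ima(\al)$ (that is, $\al\in P_1\cup P_3$) I attach the parameter $a=c\al^{-1}\in A_c$, and to each $\al$ with $c\in\dom(\al)$ (that is, $\al\in P_2\cup P_3$) the parameter $b=c\al\in B_c$.

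The first technical step is a reconstruction principle: on $\{\al\in S_c:c\in\ima(\al)\}$ the assignment $\al\mapsto(a,\als)$ is injective, and dually $\al\mapsto(b,\als)$ is injective on $\{\al\in S_c:c\in\dom(\al)\}$. This follows from Lemma~\ref{lpre}: by parts (1) and (3) the set $U_\al$ and the restriction $\al|_{U_\al}$ depend only on $a=c\al^{-1}$, and since $U_\al\subseteq\dom(\al)$ and $\als$ is $\al$ restricted to the complement of $U_\al$ (or of $U_\al\cup D_\al$, which equals $U_\al$ for $\al\in P_3$), the pair $(a,\als)$ recovers $\al$; part (2) gives the dual statement. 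The second step confines the images. By Lemma~\ref{lsccom} the set $S_c^*$ is commutative, and each $\als$, being a restriction of the nilpotent $\al$, is itself nilpotent; since a commuting family of nilpotents generates a commutative semigroup in which $(\sig\mu)^k=\sig^k\mu^k$, that semigroup is nilpotent, so any such family inside $\mi(Y)$ has at most $\lam_{|Y|}$ members by the inductive hypothesis~(\ref{eqsl1}) together with Proposition~\ref{pcase1}. Lemma~\ref{lemp} then forces $\spa(\als)\cap B_c=\emptyset$ when $c\in\ima(\al)$ and $\spa(\als)\cap A_c=\emptyset$ when $c\in\dom(\al)$. Hence the $\als$ coming from $P_1\cup P_3$ live in $\mi(X-(\{c\}\cup B_c))$ (ground set of size $n-t-1$), those from $P_2\cup P_3$ in a set of size $n-p-1$, and those from $P_3$ in a set of size $n-p-t-1$. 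Combined with the reconstruction principle, each fixed value of $a$ accounts for at most $\lam_{n-t-1}$ elements, each fixed $b$ for at most $\lam_{n-p-1}$, and each $P_3$-parameter for at most $\lam_{n-p-t-1}$.

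The decisive step is the coefficient bookkeeping that turns the crude bounds $p\,\lam_{n-t-1}$ and $t\,\lam_{n-p-1}$ into the sharp $(p-1)\lam_{n-t-1}$ and $(t-1)\lam_{n-p-1}$ while leaving a Type~III contribution of only $2\lam_{n-p-t-1}$. Here I would exploit the matching structure: if $\al\in P_3$ with $a\al=c$ and $c\al=b$, then applying the defining property of $q$ in Lemma~\ref{lqabc} to $\al$ itself yields $q(c,a,b)=c$, and by Lemma~\ref{ldif1} the set $\{(a,b):q(c,a,b)=c\}$ is a partial matching of $A_c$ with $B_c$. The aim is to show that one element $a^*\in A_c$ can be charged entirely to $P_3$ — i.e.\ that every $\al$ with $c\al^{-1}=a^*$ already has $c\in\dom(\al)$, so that its $\als$ avoids $A_c$ as well and is counted by $\lam_{n-p-t-1}$ rather than $\lam_{n-t-1}$ — and dually for a $b^*\in B_c$. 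Granting this, $|P_1\cup P_3|\le(p-1)\lam_{n-t-1}+\lam_{n-p-t-1}$ and $|P_2\cup P_3|\le(t-1)\lam_{n-p-1}+\lam_{n-p-t-1}$, whence $|S_c|=|P_1\cup P_3|+|P_2\cup P_3|-|P_3|\le(p-1)\lam_{n-t-1}+(t-1)\lam_{n-p-1}+2\lam_{n-p-t-1}$.

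I expect the production of the distinguished parameters $a^*,b^*$ to be the main obstacle, and it is exactly where the commuting hypothesis must be used in force: Proposition~\ref{pcen} makes the configuration around $c$ rigid (two distinct preimages of $c$ reaching it along chains that both terminate at $c$ cannot coexist with an element having $c$ in its domain), and this rigidity should guarantee that the preimage of $c$ lying adjacent to $c$ along some element of $P_3$ absorbs all of the $\ima$-side elements with that parameter. I would treat the degenerate situations separately: when $p=1$ or $t=1$ the corresponding coefficient $p-1$ or $t-1$ vanishes and the statement reduces to a direct estimate, and when $P_3=\emptyset$ one must check, using the same rigidity, that $p$ and $t$ are forced small enough for the bare term $2\lam_{n-p-t-1}$ to carry the count; the extreme case $p+t=n-1$, where $n-p-t-1=0$ and $\lam_{0}=1$, is covered by the conventions already fixed for $\lam$.
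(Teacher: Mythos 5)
Your overall framework is close to the paper's: the same map $\al\mapsto\als$, the same reconstruction principle from Lemma~\ref{lpre}, the same use of Lemmas~\ref{lsccom} and~\ref{lemp} to place the images in $\mi(X-(B_c\cup\{c\}))$, $\mi(X-(A_c\cup\{c\}))$, or $\mi(X-(A_c\cup B_c\cup\{c\}))$, and the same appeal to Proposition~\ref{pcase1} and the inductive hypothesis to bound a commuting family of nilpotents by $\lam$ of the ground set. The inclusion--exclusion $|S_c|=|P_1\cup P_3|+|P_2\cup P_3|-|P_3|$ is also fine. Moreover, the half of your plan that you flag as the obstacle is actually available when $P_3\ne\emptyset$: part (3) of Lemma~\ref{lpre} says precisely that if some $\al_0$ has $c\in\dom(\al_0)\cap\ima(\al_0)$ and $c\bt^{-1}=c\al_0^{-1}$, then $c\in\dom(\bt)$, so the parameter $a^*=c\al_0^{-1}$ does absorb its whole fibre into $P_3$, and your bookkeeping then closes.

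The genuine gap is the case $P_3=\emptyset$, which your proposal does not handle and which your suggested fallback cannot repair. If no element of $S_c$ has $c$ in both its domain and its image, there is no distinguished $a^*$ or $b^*$ (every $a\in A_c$ has all of its fibre in $P_1$), and your count degenerates to $p\,\lam_{n-t-1}+t\,\lam_{n-p-1}$, which strictly exceeds the claimed bound; this already fails for $p=t=1$, where the target is $2\lam_{n-3}$ but your estimate gives $2\lam_{n-2}$, and such configurations do occur (e.g.\ $\al=[a\,c]\jo[q\,b]$, $\bt=[a\,q]\jo[c\,b]$ commute, with $A_c=\{a\}$, $B_c=\{b\}$, and no element having $c$ on both sides). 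The claim that ``$p$ and $t$ are forced small enough'' has no content here. The idea you are missing is the paper's \emph{local} exclusion, which replaces your global $a^*$: group the count by the image $\als$ rather than by the parameter, and observe that if $a_0\in\spa(\als)\cap A_c$, then $a_0$ cannot equal $c\al^{-1}$ for any preimage $\al$ of $\als$, because $c\al^{-1}$ lies in $U_\al$ and so do all of its $\al$-preimages, all of which are excised in forming $\als=\al|_{X-U_\al}$. Hence each $\als$ whose span meets $A_c$ has at most $p-1$ preimages, each $\als$ whose span meets $B_c$ has at most $t-1$, and each $\als$ avoiding both has at most $p+t$; writing $(p-1)|F_A|+(t-1)|F_B|+(p+t)|F_0|=(p-1)(|F_A|+|F_0|)+(t-1)(|F_B|+|F_0|)+2|F_0|$ and applying the three $\lam$-bounds gives the lemma uniformly, with no case split on $P_3$.
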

\begin{proof}
Let $A=A_c$, $B=B_c$, and consider the following subsets of $S_c^*$:
\begin{align}
F_A&=\{\als\in S_c^*:\spa(\als)\cap A\ne\emptyset\},\notag\\
F_B&=\{\als\in S_c^*:\spa(\als)\cap B\ne\emptyset\},\notag\\
F_0&=\{\als\in S_c^*:\spa(\als)\cap(A\cup B)=\emptyset\}.\notag
\end{align}
Suppose $\als\in F_A$. Then, by Lemma~\ref{lemp}, $c\in\ima(\al)-\dom(\al)$ and $\spa(\als)\cap B=\emptyset$.
Hence $\als\in\mi(X-(B\cup\{c\}))$.
Similarly, if $\als\in F_B$, then $c\in\dom(\al)-\ima(\al)$, $\spa(\als)\cap A=\emptyset$, and
$\als\in\mi(X-(A\cup\{c\}))$. If $\als\in F_0$, then clearly $\als\in\mi(X-(A\cup B\cup\{c\}))$.
Thus, $S_c^*=F_A\cup F_B\cup F_0$ and the sets $F_A$, $F_B$, and $F_0$ are pairwise disjoint.

By Lemma~\ref{lsccom}, $F_A$, $F_B$, and $F_0$ are sets of commuting transformations (as subsets of $S_c^*$).
Let $F$ be any subset of $S_c^*$ and denote by $\langle F\rangle$ the semigroup generated by $F$.
Then $\langle F\rangle$ is clearly commutative.
Suppose to the contrary that $\langle F\rangle$ is not a nilpotent semigroup. Then it contains a nonzero
idempotent, say  $\vep=\als_1\cdots\als_k$, where $\als_i\in F$. Let $x\in X$ be any element fixed by $\vep$.
Then $x(\als_1\cdots\als_k)=x$, and so $x(\al_1\cdots\al_k)=x$ since each $\als_i$ is a restriction of $\al_i$.
But this is a contradiction since $\al_1\cdots\al_k$ is a nilpotent as an element of $S$. Thus
$\langle F\rangle$ is a nilpotent semigroup.

Hence, by Proposition~\ref{pcase1} and the inductive
hypothesis applied to $\langle F_A\cup F_0\rangle\subseteq \mi(X-(B\cup\{c\}))$,
$\langle F_B\cup F_0\rangle\subseteq \mi(X-(A\cup\{c\}))$, and $\langle F_0\rangle\subseteq\mi(X-(A\cup B\cup\{c\}))$), we have
\begin{equation}\label{e1lbound}
|F_A|+|F_0|\leq\lam_{n-t-1},\,\,|F_B|+|F_0|\leq\lam_{n-p-1},\,\,|F_0|\leq\lam_{n-p-t-1}.
\end{equation}

Suppose $\als\in F_A$. Then $c\in\ima(\al)-\dom(\al)$, and so $a\al=c$ for some $a\in X$. Note that $a\in A$.
Fix $a_0\in\spa(\als)\cap A$.
Suppose to the contrary that $a_0=a$. Then $a_0\notin\dom(\als)$ since $a_0=a=c\al^{-1}\in U_\al$
and $\als=\al|_{X-U_\al}$. Hence $a_0\in\ima(\als)$, that is, $x\als=a_0=a$ for some $x\in\dom(\als)$.
But this is a contradiction since $x=a\al^{-1}\in U_\al$, and so $x\notin\dom(\als)$. We have proved that $a_0\ne a$.
Suppose $\als=\bts$. By the foregoing argument, there is $a'\in A$ such that $a'\bt=c$ and $a'\ne a_0$.
Moreover, if $a=a'$, then $\al=\bt$ by Lemma~\ref{lpre}. 

It follows that any $\als\in F_A$ has at most $p-1$
preimages under the mapping $^*$ (which correspond to the number of elements
from the set $A-\{a_0\}$ that $\al$ can map to $c$ if $\als\in F_A$). By similar arguments, any $\al\in F_B$ has at most $t-1$ preimages under~$^*$,
and any $\als\in F_0$ has at most $p+t$ preimages under~$^*$. These considerations about the number of preimages
that an element of $S_c^*$ can have, together with (\ref{e1lbound}), give
\begin{align}
|S_c|&\leq(p-1)|F_A|+(t-1)|F_B|+(p+t)|F_0|\notag\\
&=(p-1)(|F_A|+|F_0|)+(t-1)(|F_B|+|F_0|)+2|F_0|\notag\\
&\leq (p-1)\lam_{n-t-1}+(t-1)\lam_{n-p-1}+2\lam_{n-p-t-1},\notag
\end{align}
which completes the proof.
\end{proof}

The following proposition will finish our inductive proof of (\ref{eqsl1}). The proposition is stronger than what we need in this section,
but we will also use it in the proof of the general case.

\begin{prop}\label{pcase2}
Let $X$ be a set with $n\geq4$. Let $S$ be a commutative nilpotent subsemigroup
of $\mi(X)$ with $C\ne\emptyset$. Then:
\begin{itemize}
  \item [\rm(1)] If $n\leq7$, then $|S|<\lam_n$.
  \item [\rm(2)] If $n\geq8$, then $|S|<\lam_n-n$.
\end{itemize}
\end{prop}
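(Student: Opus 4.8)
The plan is to bound $|S|$ by splitting it along the distinguished point $c$ supplied by Lemma~\ref{lbou}. First I would fix $c\in C$ satisfying one of the conditions (a)--(c) of Lemma~\ref{lbou} and set $p=|A_c|$, $t=|B_c|$; since $A_c$, $B_c$, and $\{c\}$ are pairwise disjoint, $p+t\le n-1$. Writing $|S|=|S_c|+|S-S_c|$, using $|S-S_c|\le\lam_{n-1}$ from (\ref{eqsl3}) together with the bound on $|S_c|$ from Lemma~\ref{lbound}, and invoking the recursion $\lam_n=\lam_{n-1}+\nc\lam_{n-2}$ of Lemma~\ref{lcom1}, the whole statement reduces to the single inequality
\[
|S_c|<\nc\lam_{n-2}\ \ (n\le7),\qquad |S_c|<\nc\lam_{n-2}-n\ \ (n\ge8),
\]
where I use the natural convention $\lam_0=1$ (consistent with (\ref{e1snil})). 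This inequality is then checked in the three cases of Lemma~\ref{lbou}; since (b) and (c) are interchanged by swapping $p$ and $t$, and the bound in Lemma~\ref{lbound} is symmetric in $(p,t)$, it suffices to treat (a) and (b).

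For $n\ge8$ I expect the estimate to be routine, driven entirely by the fast growth of $\lam$. In case (a), where $p,t\ge2$, the monotonicity bounds $\lam_{n-t-1},\lam_{n-p-1}\le\lam_{n-3}$ together with $p+t\le n-1$ give $|S_c|\le(n-3)\lam_{n-3}+2$; since $2\nc\ge n-1$ and $\lam_{n-2}\ge2\lam_{n-3}$ by Lemma~\ref{lcom4} (valid because $n-2\ge6$), one obtains $\nc\lam_{n-2}-|S_c|\ge2\lam_{n-3}-2>n$. In case (b), with $p=1$, Lemma~\ref{lbound} gives $|S_c|\le(t-1)\lam_{n-2}+2\lam_{n-t-2}$, so the target is $(\nc-t+1)\lam_{n-2}-2\lam_{n-t-2}>n$; using $\nc-t+1\ge1$, $\lam_{n-2}\ge2\lam_{n-t-2}$, and, when $t$ is near $\nc$, the large gap between $\lam_{n-2}$ and $\lam_{n-t-2}$ furnished by Lemma~\ref{lcom4}, this holds with room to spare. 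Thus the linear term $-n$ is absorbed by the exponential growth of $\lam_n$.

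For $n\in\{4,5,6,7\}$ I would substitute the explicit values $\lam_0=1,\lam_1=1,\lam_2=2,\lam_3=3,\lam_4=7,\lam_5=13,\lam_6=34$ (from the table in Lemma~\ref{lcom4}) into the case bounds for every admissible pair $(p,t)$. A direct check shows that the required strict inequality $|S_c|<\nc\lam_{n-2}$ holds in every sub-case except two boundary configurations: $n=4$ with $p=1,t=2$ (and its mirror $p=2,t=1$), and $n=5$ with $p=t=2$, where the bound of Lemma~\ref{lbound} evaluates to exactly $\nc\lam_{n-2}$ (namely $4$ and $6$). In these two configurations the crude count yields only $|S|\le\lam_n$, so strictness must be obtained separately.

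The \emph{main obstacle} is precisely these two tight small cases. Here $|S|=\lam_n$ would force every intermediate inequality to be sharp: $|S-S_c|=\lam_{n-1}$ and the bound of Lemma~\ref{lbound} attained, which in turn requires a single element of $S_c^*$ lying in $F_0$ to have the full $p+t$ preimages in $S_c$. Note that in both configurations $X=A_c\cup B_c\cup\{c\}$, so nothing lies outside and the relevant $F_0$ element is the empty map. I would rule out the maximal preimage count directly from Proposition~\ref{pcen}: its preimages are short chains running through $c$ (of the forms $[a\,c]$, $[c\,b]$, $[a\,c\,b]$ on the $4$- or $5$-element set), and a short computation shows that distinct such chains fail to commute, so the full preimage count cannot be realized. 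This contradicts $|S_c|=\nc\lam_{n-2}$ and yields the strict inequality, finishing the two exceptional cases. I expect this commutativity analysis, rather than the arithmetic, to be the delicate part of the proof.
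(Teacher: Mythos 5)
Your treatment of $n\geq8$ is essentially the paper's own proof: the same split $|S|=|S-S_c|+|S_c|$, the same inputs (\ref{eqsl3}) and Lemma~\ref{lbound}, the same case division dictated by Lemma~\ref{lbou}, and the same use of the recursion of Lemma~\ref{lcom1}. One slip there: in case (a) you bound $2\lam_{n-p-t-1}$ by $2$, but $p+t$ can be as small as $4$, so the correct bound is $2\lam_{n-5}$; since $2\lam_{n-3}-2\lam_{n-5}>n$ for $n\geq8$, this does not affect the conclusion. Reducing everything to $|S_c|<\nc\lam_{n-2}$ (resp.\ $<\nc\lam_{n-2}-n$) via Lemma~\ref{lcom1} at the outset is a tidier organization than the paper's, which carries $\lam_{n-1}$ through each case.

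Where you genuinely diverge is part (1). The paper does not prove $n\leq7$ by hand at all: it verifies it computationally (GAP/GRAPE), enumerating the maximal commutative nilpotent subsemigroups for $n\in\{4,5,6,7\}$ and observing that the only ones of order $\lam_n$ are balanced null semigroups, which have $C=\emptyset$. Your hand analysis correctly isolates the two tight configurations ($n=4$ with $\{p,t\}=\{1,2\}$ and $n=5$ with $p=t=2$), but the step you yourself identify as the crux contains a genuine error. The preimages of $0\in F_0$ under $^*$ are not only the chains $[a\,c]$, $[c\,b]$, $[a\,c\,b]$: any $\al\in S_c$ with $\dom(\al)\subseteq D_\al\cup U_\al$ has $\als=0$, and for $n=4$ with $A_c=\{a\}$, $B_c=\{b_1,b_2\}$ this includes $[a\,c\,b_1\,b_2]$, which commutes with its square $[a\,b_1]\jo[c\,b_2]$ --- itself a preimage of $0$. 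So ``distinct such chains fail to commute'' is false as stated, and the enumeration on which it rests is incomplete (for $n=5$ one must also consider chains such as $[a_1\,a_2\,c\,b_1\,b_2]$). The conclusion is rescuable --- for instance, by (the proof of) Lemma~\ref{ldia4}, anything commuting with an index-$n$ nilpotent is one of its powers, and at most two of those powers lie in $S_c$ with star equal to $0$, still short of the required $p+t\in\{3,4\}$ --- but as written the delicate step is not correct, so the two exceptional cases remain open in your argument.
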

\begin{proof}
We have checked that (1) is true by direct calculations using GRAPE \cite{So06},
which is a package for GAP \cite{Scel92}. For $n\in\{4,5,6,7\}$, we have calculated the orders of the
maximal commutative nilpotent semigroups and the number of semigroups of each order.
The following table contains the maximum order of a commutative nilpotent semigroup
(row 2) and the number of commutative nilpotent semigroups of the maximum order.

\[
\begin{tabular}{|c|c|c|c|c|}\hline
$n$&4&5&6&7\\\hline
Max order&7&13&34&73\\\hline
No of sgps of max order&6&20&20&70\\\hline
\end{tabular}
\]
The numbers in the second row of the table are $\lam_4$, $\lam_5$, $\lam_6$, and $\lam_7$ (see the table in Lemma~\ref{lcom4}).
The numbers in the third row are $\binom{4}{2}$, $2\binom{5}{2}$, $\binom{6}{3}$, and $2\binom{7}{3}$. This means that the commutative
nilpotent semigroups of the maximum order are the balanced null semigroups  $S_{\!\tk\!,\tl}$
since, for $m=\nc$, there are $\binom{n}{m}$ such semigroups if $n$ is even,
and $2\binom{n}{m}$ such semigroups if $n$ is odd (see the proof of Theorem~\ref{tgen}). Since $C=\emptyset$ for each semigroup  $S_{\!\tk\!,\tl}$ (balanced or not), (1) follows.

To prove (2), suppose $n\geq8$. Let $c\in C$ be an element that satisfies one of the conditions (1)--(3)
from Lemma~\ref{lbou}. Let $p=|A_c|$ and $t=|B_c|$. By (\ref{eqsl3}) and Lemma~\ref{lbound},
\begin{equation}\label{e1}
|S|=|S-S_c|+|S_c|\leq\lam_{n-1}+(p-1)\lam_{n-t-1}+(t-1)\lam_{n-p-1}+2\lam_{n-p-t-1}.
\end{equation}
We consider four possible cases.
\vskip 1mm
\noindent{\bf Case 1.} $p\geq2$ and $t\geq2$.
\vskip 1mm
By (\ref{e1}),
\begin{align}
|S|&\leq\lam_{n-1}+(p-1)\lam_{n-t-1}+(t-1)\lam_{n-p-1}+ 2\lam_{n-p-t-1}\notag\\
\label{e11}&\leq\lam_{n-1}+(p-1)\lam_{n-3}+(t-1)\lam_{n-3}+2\lam_{n-5}\\
\label{e21}&\leq\lam_{n-1}+(n-3)\lam_{n-3}+2\lam_{n-5},
\end{align}
where (\ref{e11}) follows from $n-t-1,n-p-1\leq n-3$ and $n-p-t-1\leq n-5$, and (\ref{e21}) from $p+t\leq n-1$
(so $p+t-2\leq n-3$).
For $n=8$ and $n=9$, $\lam_{n-1}+(n-3)\lam_{n-3}+2\lam_{n-5}<\lam_n-n$ by direct calculations:
\[
\begin{tabular}{|c|c|c|}\hline
$n$&8&9\\\hline
$\lam_n-n$&201&492\\\hline
$\lam_{n-1}+(n-3)\lam_{n-3}+2\lam_{n-5}$&144&427\\\hline
\end{tabular}
\]
For $n\geq10$, $\lam_{n-5}>n$ (see the table in Lemma~\ref{lcom4}), and so
\begin{align}
|S|&\leq\lam_{n-1}+(n-3)\lam_{n-3}+2\lam_{n-5}\notag\\
\label{e2a1}&<\lam_{n-1}+(n-3)\lam_{n-3}+3\lam_{n-5}-n\\
\label{e31}&<\lam_{n-1}+(n-3)\lam_{n-3}+\mbox{$\frac{3}{4}$}\lam_{n-3}-n\\
&<\lam_{n-1}+(n-2)\lam_{n-3}-n\notag\\
\label{e41}&\leq \lam_{n-1}+\mbox{$\nc$}\lam_{n-2}-n\\
\label{e51}&=\lam_n-n,
\end{align}
where  (\ref{e2a1}) follows from $\lam_{n-5}>n$ when $n\geq10$ (see Lemma~\ref{lcom4} and the table in its proof),
(\ref{e31}) from $\lam_{n-3}>4\lam_{n-5}$ when $n\geq10$ (see Lemma~\ref{lcom4} and the table in its proof),
(\ref{e41}) from $\lam_{n-2}>2\lam_{n-3}>\frac{n-2}{\nc}\lam_{n-3}$ when $n\geq8$ (see Lemma~\ref{lcom4}),
and (\ref{e51}) from Lemma~\ref{lcom1}.
\vskip 1mm
\noindent{\bf Case 2.} $p=1$ and $t=1$.
\vskip 1mm
Then, by (\ref{e1}),
\begin{align}
|S|&\leq\lam_{n-1}+2\lam_{n-3}\notag\\
\label{e12}&<\lam_{n-1}+3\lam_{n-3}-n\\
\label{e22}&<\lam_{n-1}+\mbox{$\frac{3}{2}$}\lam_{n-2}-n\\
&<\lam_{n-1}+\mbox{$\nc$}\lam_{n-2}-n\notag\\
\label{e32}&=\lam_n-n,
\end{align}
where (\ref{e12}) follows from $\lam_{n-3}>n$ when $n\geq8$, (\ref{e22}) from $\lam_{n-2}>2\lam_{n-3}$ when $n\geq8$ (see Lemma~\ref{lcom4}),
and (\ref{e32}) from Lemma~\ref{lcom1}.
\vskip 1mm
\noindent{\bf Case 3.} $p=1$ and $2 \le t\leq\nc$.
\vskip 1mm
Again by (\ref{e1}),
\begin{align}
|S|&\leq\lam_{n-1}+(t-1)\lam_{n-2}+2\lam_{n-t-2}\notag\\
&\leq\lam_{n-1}+(\mbox{$\nc-1$})\lam_{n-2}+2\lam_{n-4}\notag\\
\label{e13}&\leq\lam_{n-1}+(\mbox{$\nc-1$})\lam_{n-2}+3\lam_{n-4}-n+1\\
\label{e23}&<\lam_{n-1}+(\mbox{$\nc-1$})\lam_{n-2}+\mbox{$\frac34$}\lam_{n-2}-n+1\\
&=\lam_{n-1}+\mbox{$\nc$}\lam_{n-2}-\mbox{$\frac14$}\lam_{n-2}-n+1\notag\\
\label{e33}&<\lam_{n-1}+\mbox{$\nc$}\lam_{n-2}-n\\
\label{e43}&=\lam_n-n,
\end{align}
where (\ref{e13}) follows from $\lam_{n-4}\geq n-1$ when $n\geq8$ (see Lemma~\ref{lcom4} and the table in its proof), 
(\ref{e23}) from $\lam_{n-2}>4\lam_{n-4}$ when $n\geq8$ (see Lemma~\ref{lcom4} and the table in its proof),
(\ref{e33}) from $\frac14\lam_{n-2}>1$ for $n\geq6$,
and (\ref{e43}) from Lemma~\ref{lcom1}.
\vskip 1mm
\noindent{\bf Case 4.} $2\le p\leq\nc$ and $t=1$.
\vskip 1mm
This case is symmetric to Case~3.
\end{proof}

The inductive proof of (\ref{eqsl1}) is complete. As a bonus, we have Proposition~\ref{pcase2}. We can now
prove the main theorem of this section.

\begin{theorem}\label{tnil}
Let $X$ be a set with $n\geq1$ elements and let $m=\nc$. Then:
\begin{itemize}
   \item [\rm(1)] The maximum cardinality of a commutative nilpotent subsemigroup of $\mi(X)$ is
\[ 
\lam_n=\sum_{r=0}^m\binom{m}{r}\binom{n-m}{r}r!.
\]
  \item [\rm(2)] If $n\notin\{1,3\}$, then the only commutative nilpotent subsemigroups of $\mi(X)$ of order
$\lam_n$ are the balanced null semigroups $S_{\!\tk\!,\tl}$.
\end{itemize}
\end{theorem}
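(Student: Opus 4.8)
The plan is to reduce Theorem~\ref{tnil} to the two propositions already established, namely Proposition~\ref{pcase1} (the case $C=\emptyset$) and Proposition~\ref{pcase2} (the case $C\ne\emptyset$), and to the bound $\lam_n$ carried by the balanced null semigroups themselves. First I would prove statement (1). Let $S$ be a commutative nilpotent subsemigroup of $\mi(X)$. If $n\in\{1,3\}$ the claim is trivial or was verified when establishing (\ref{eqsl1}); assume $n\ge 4$. Form the set $C=C(S)$ from Notation~\ref{nabc}. If $C=\emptyset$, then either $S$ equals some balanced null semigroup $S_{\!\tk\!,\tl}$, in which case $|S|=\lam_n$ by Lemma~\ref{lmax} and Definition~\ref{dbal}, or $S$ is not such a semigroup and Proposition~\ref{pcase1} gives $|S|<\lam_n$. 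If $C\ne\emptyset$, then $n\ge 3$ and the inequality (\ref{eqsl1}), whose inductive proof was completed just before the theorem, gives $|S|\le\lam_n$. In all cases $|S|\le\lam_n$, and since the balanced null semigroups attain $\lam_n$ (Lemma~\ref{lmax}), the maximum is exactly $\lam_n$. This disposes of (1).

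For statement (2), assume $n\notin\{1,3\}$ and let $S$ be a commutative nilpotent subsemigroup with $|S|=\lam_n$. The strategy is to show that $C(S)$ must be empty and that $S$ must in fact be balanced. First I would argue $C=\emptyset$: if instead $C\ne\emptyset$, then $n\ge 3$, and since $n\ne 3$ we have $n\ge 4$; Proposition~\ref{pcase2} then forces $|S|<\lam_n$ (strictly, in both the $n\le 7$ and $n\ge 8$ sub-cases), contradicting $|S|=\lam_n$. Hence $C=\emptyset$. Now apply Proposition~\ref{pcase1}: with $C=\emptyset$, if $S$ were \emph{not} equal to any balanced null semigroup $S_{\!\tk\!,\tl}$ then $|S|<\lam_n$, again a contradiction. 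Therefore $S=S_{\!\tk\!,\tl}$ for some balanced null semigroup, which is precisely the conclusion of (2).

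The genuinely substantive work has already been packaged into the earlier propositions, so the remaining task is essentially bookkeeping: checking that the hypotheses of Proposition~\ref{pcase1} and Proposition~\ref{pcase2} are met and that the strict inequalities they deliver close off every alternative to a balanced null semigroup. The one point requiring care is the role of the excluded values $n\in\{1,3\}$. For $n=1$ the only nilpotent semigroup is $\{0\}$ of order $\lam_1=1$, and for $n=3$ the base case computation recorded before the inductive hypothesis exhibits the cyclic semigroups $\{0,[i\,j\,k],[i\,k]\}$ as commutative nilpotent semigroups of order $\lam_3=3$ that are \emph{not} balanced null semigroups; this is exactly why (2) must exclude $n=3$. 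I would note this explicitly so the reader sees that the exclusion is sharp rather than an artifact of the proof.

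The main obstacle, such as it is, is not in the logic of the reduction but in confirming that Proposition~\ref{pcase2} genuinely covers every $n\ge 4$ with $C\ne\emptyset$, including the small cases $n\in\{4,5,6,7\}$ handled there by the GRAPE computation and the cases $n\ge 8$ handled by the four-case estimate. Since both ranges yield strict inequality $|S|<\lam_n$, the contradiction argument goes through uniformly, and no separate treatment inside the proof of the theorem is needed. Thus the proof of Theorem~\ref{tnil} is short: it is a clean case split on whether $C(S)$ is empty, invoking Proposition~\ref{pcase1}, Proposition~\ref{pcase2}, and the completed inequality (\ref{eqsl1}).
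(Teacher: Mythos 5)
Your proposal is correct and follows essentially the same route as the paper: a case split on whether $C(S)$ is empty, with Lemma~\ref{lmax} giving the value $\lam_n$ for balanced null semigroups, Proposition~\ref{pcase1} handling $C=\emptyset$, and Proposition~\ref{pcase2} (together with the $n=3$ base case of \eqref{eqsl1}) handling $C\ne\emptyset$. Your explicit remark that the cyclic semigroup $\{0,[i\,j\,k],[i\,k]\}$ shows the exclusion of $n=3$ in (2) is sharp matches the paper's treatment.
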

\begin{proof}
Let $S$ be a commutative nilpotent subsemigroup of $\mi(X)$. If $n=1$, then $S=\{0\}$, so $|S|=1=\lam_1$.
Let $n\geq2$. If $S=S_{\!\tk\!,\tl}$ is a balanced null semigroup, then
$|S|=\lam_n$ by Lemma~\ref{lmax}. Suppose $S$ is not one of the balanced null semigroups $S_{\!\tk\!,\tl}$.
If $C=\emptyset$, then $|S|<\lam_n$ by Proposition~\ref{pcase1}.
Suppose $C\ne\emptyset$. If $n=3$, then 
$S=\langle[i\,j\,k]\rangle=\{0,[i\,j\,k],[i\,k]\}$, where $i,j,k$ are pairwise distinct elements of $X$,
so $|S|=3=\lam_3$. If $n\geq4$, then $|S|<\lam_n$ by Proposition~\ref{pcase2}.
The result follows. 
\end{proof}

\section{The Largest Commutative Semigroups in $\mi(X)$}\label{sgen}
In this section, we determine the maximum order of a commutative subsemigroup of $\mi(X)$,
and describe the commutative subsemigroups of $\mi(X)$ of the maximum order (Theorem~\ref{tgen}).

\begin{lemma}\label{leix1}
Let $X$ be a set with $n<10$ elements. Suppose $S$ is a commutative subsemigroup of $\mi(X)$
such that $S\ne E(\mi(X))$, where $E(\mi(X))$ is the semilattice of idempotents of $\mi(X)$.
Then $|S|<2^n$.
\end{lemma}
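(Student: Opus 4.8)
The plan is to argue by strong induction on $n$, proving the slightly more precise statement that for every set $Y$ with $1\le|Y|\le9$, every commutative subsemigroup of $\mi(Y)$ has order at most $2^{|Y|}$, with equality if and only if it equals $E(\mi(Y))$. The cases $|Y|\le2$ I would settle by direct inspection of the (tiny) semigroups $\mi(Y)$. For the inductive step I would fix $X$ with $3\le n=|X|\le9$ and a commutative subsemigroup $S\ne E(\mi(X))$. If $S\subseteq E(\mi(X))$ then $S$ is a proper subset and $|S|<2^n$ at once, so I would assume $S$ contains a non-idempotent element and split according to whether $S$ contains a \emph{splitting element}, by which I mean an idempotent $\gam\notin\{0,1\}$ or a permutation not all of whose cycles have equal length.

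In the decomposable case (a splitting element $\gam\in S$ exists), Lemma~\ref{lres1} supplies a partition $\{A,B\}$ of $X$ with $A,B\ne\emptyset$ such that $\al|_A\in\mi(A)$ and $\al|_B\in\mi(B)$ for every $\al\in S$, since each $\al$ commutes with $\gam$. Setting $S_{\!\ta}=\{\al|_A:\al\in S\}$ and $S_{\!\tb}=\{\al|_B:\al\in S\}$, these are commutative subsemigroups of $\mi(A),\mi(B)$ by Lemma~\ref{lres3}(1), and the restriction map $\al\mapsto(\al|_A,\al|_B)$ is injective, so $|S|\le|S_{\!\ta}|\,|S_{\!\tb}|\le2^{|A|}2^{|B|}=2^n$ by the inductive hypothesis. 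If $|S|=2^n$ then $|S_{\!\ta}|=2^{|A|}$ and $|S_{\!\tb}|=2^{|B|}$, forcing $S_{\!\ta}=E(\mi(A))$ and $S_{\!\tb}=E(\mi(B))$; but then every $\al\in S$ has both restrictions equal to partial identities, so $\al$ is itself a partial identity and $S\subseteq E(\mi(X))$, contrary to assumption. Hence $|S|<2^n$.

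In the indecomposable case, $S$ has no nontrivial idempotent and every permutation in $S$ is semiregular, so $E(S)\subseteq\{0,1\}$. As $S$ is a finite commutative semigroup, each element has an idempotent power in $\{0,1\}$, hence is either a unit or a nilpotent, and so $S=G\sqcup N$, where $G=S\cap\sym(X)$ is the (possibly empty) group of units and $N$ is the set of nilpotents of $S$. Since a product of nilpotents in a commutative semigroup is again nilpotent, $N$ is a commutative nilpotent subsemigroup, so $|N|\le\lam_n$ by Theorem~\ref{tnil}; and $G$, when nonempty, is a semiregular subgroup of $\sym(X)$, so $|G|\le n$. Therefore $|S|=|G|+|N|\le n+\lam_n$, and the table in the proof of Lemma~\ref{lcom4} gives $n+\lam_n<2^n$ for every $3\le n\le9$ (indeed the gap is tightest at $n=9$, where $9+501=510<512$).

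I expect the decomposition bookkeeping and the final numerical comparison to be routine. The step needing the most care is the equality analysis in the decomposable case, where the \emph{uniqueness} clause of the inductive hypothesis is indispensable: without it one obtains only $|S|\le2^n$, whereas the whole content of the lemma is the strictness. The conceptual crux there is the observation that component-wise idempotence glues to global idempotence, which is what converts the equality $|S|=2^n$ into $S\subseteq E(\mi(X))$. The secondary point, and the reason the hypothesis reads $n<10$, is confirming that the crude indecomposable bound $n+\lam_n$ already drops below $2^n$ exactly in this range, since $\lam_{10}=1546>1024=2^{10}$.
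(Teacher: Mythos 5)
Your proposal is correct and takes essentially the same route as the paper's proof: the same dichotomy between the presence of a splitting element (handled by Lemmas~\lemref{res1} and~\lemref{res3} plus induction on $n$) and its absence (handled by semiregularity of $G$ and Theorem~\thmref{nil}, with the numerical check $\lam_n+n<2^n$ for $3\le n\le 9$). The only noteworthy deviation is that you strengthen the induction to an equality characterization and rely merely on injectivity of $\al\mapsto(\al|_A,\al|_B)$ instead of the isomorphism $S\cong S_{\!\ta}\times S_{\!\tb}$ of Lemma~\lemref{res3}(3), which quietly sidesteps that lemma's maximality hypothesis.
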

\begin{proof}
The lemma is vacuously true when $n=1$. It is also true when $n=2$ since then the only maximal
commutative subsemigroups of $\mi(X)$ other than $E(\mi(X))$ are $\sym(X)\cup\{0\}$ and $\{0,1,[i\,j]\}$,
where $i$ and $j$ are distinct elements of $X$. Let $n\geq3$ and suppose, as the inductive hypothesis,
that the result is true whenever $|X|<n$. Let $G=S\cap\sym(X)$ and $T=S-G$.

Suppose $G$ is a semiregular subgroup of $\sym(X)$ and $T$ is a nilpotent semigroup. Then $|G|\leq n$ (since $G$ is semiregular)
and $|T|\leq\lam_n$ (by Theorem~\ref{tnil}). Thus $|S|\leq \lam_n+n<2^n$, where the latter inequality follows from
the table below.
\[
\begin{tabular}{|c|c|c|c|c|c|c|c|}\hline
$n$&3&4&5&6&7&8&9\\\hline
$\lam_n+n$&6&11&18&40&80&217&510\\\hline
$2^n$&8&16&32&64&128&256&512\\\hline
\end{tabular}
\]

Suppose $G$ is not a semiregular group or $T$ is not a nilpotent semigroup. 
Then, by Lemmas~\ref{lres1} and~\ref{lres3},
there is a partition $\{A,B\}$ of $X$ such that $S\cong S_{\!\ta}\times S_{\!\tb}$, where $S_{\!\ta}$ is a commutative subsemigroup
of $\mi(A)$ and $S_{\!\tb}$ is a commutative subsemigroup of $\mi(B)$. 
If $S\subseteq E(\mi(X))$, then $|S|<|E(\mi(X))|=2^n$. Suppose $S$ is not included in $E(\mi(X))$.
Then at least one of $S_{\!\ta}$ and $S_{\!\tb}$, say $S_{\!\ta}$,
must contain an element that is not an idempotent.
Let $k=|S_{\!\ta}|$. By the inductive hypothesis, $|S_{\!\ta}|<2^k$ and $|S_{\!\tb}|\leq2^{n-k}$, and so
$|S|=| S_{\!\ta}|\cdot| S_{\!\tb}|<2^k\cdot2^{n-k}=2^n$.
\end{proof}

\begin{lemma}\label{leix2}
Let $n=|X|\geq5$. Suppose $S=G\cup T$ is a commutative subsemigroup of $\mi(X)$ such that $G$ is a 
nontrivial semiregular subgroup of $\sym(X)$ and $T$ is a subsemigroup of 
$S_{\!\ta,\tb}$, where $\{A,B\}$ is a partition of $X$. Then $|S|<\lam_n+1$.
\end{lemma}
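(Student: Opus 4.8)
The plan is to exploit the disjointness of $G$ and $T$ together with the rigidity that commuting with a nontrivial semiregular group imposes on the nilpotents of $T$. First I would note that $G\cap T=\emptyset$: every element of $G$ is a permutation and so has domain $X$, whereas every element of $T$ lies in $S_{\!\ta,\tb}$ and hence has domain contained in $A$ or equals $0$, so none of them is a permutation. Thus $|S|=|G|+|T|$. Writing $g=|G|$, semiregularity gives $g\mid n$, so $g\le n$, while nontriviality gives $g\ge2$. These two facts about $g$ are all I will use on the group side.

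The main step is a structural claim: every $\tau\in T$ has $\rank\tau$ divisible by $g$. To prove it, fix $\sigma\in G$ and $\tau\in T$. Since $S$ is commutative, $\sigma\tau=\tau\sigma$, so Lemma~\ref{lclo} gives $(\dom\tau)\sigma^{-1}\subseteq\dom\tau$; as $\sigma$ is a bijection of the finite set $X$, this inclusion is an equality, so $\dom\tau$ is $\sigma$-invariant. Letting $\sigma$ range over $G$ shows $\dom\tau$ is $G$-invariant, hence a union of $G$-orbits; and since $G$ is semiregular every point-stabilizer is trivial, so each orbit has exactly $g$ points. Therefore $g\mid|\dom\tau|=\rank\tau$, and because $g\ge2$ no element of $T$ has rank $1$. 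This invariance observation is the only nonroutine part of the argument, and it is where I expect the real work to lie; everything after it is counting.

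With the claim in hand, set $a=|A|$ and $b=|B|$, so $a+b=n$. By Lemma~\ref{lmax} the semigroup $S_{\!\ta,\tb}$ has exactly $ab$ elements of rank $1$, all of which are excluded from $T$, so $|T|\le|S_{\!\ta,\tb}|-ab$; moreover $|S_{\!\ta,\tb}|\le\lam_n$ by applying Lemma~\ref{lcom2} repeatedly, exactly as in the proof of Proposition~\ref{pcase1}. When $a,b\ge2$ we have $ab\ge a+b=n\ge g$, and hence
\[
|S|=g+|T|\le g+|S_{\!\ta,\tb}|-ab\le|S_{\!\ta,\tb}|\le\lam_n<\lam_n+1 .
\]
The one case needing separate care is $\min\{a,b\}\le1$: then $\rank\tau\le\min\{a,b\}\le1$ for every $\tau\in T$, which combined with $g\mid\rank\tau$ and $g\ge2$ forces $\rank\tau=0$, so $T\subseteq\{0\}$ and $|S|=g+|T|\le n+1$. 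Finally $n+1\le\lam_n$ for all $n\ge5$, checked directly for $5\le n\le10$ from the table in Lemma~\ref{lcom4} and for $n>10$ from the geometric growth $\lam_n>2\lam_{n-1}$. In every case $|S|<\lam_n+1$, as required.
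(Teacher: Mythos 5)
Your proof is correct. It follows essentially the same counting skeleton as the paper's: in both arguments the point is that $T$ misses all $|A|\cdot|B|$ rank-one nilpotents of $S_{\!\ta,\tb}$, that $|A|\cdot|B|\geq n\geq|G|$ when $|A|,|B|\geq2$, and that the degenerate case $\min\{|A|,|B|\}=1$ is trivial. The one place where you genuinely diverge is the justification of the exclusion step: the paper applies Proposition~\ref{pcen} directly to show that no rank-one nilpotent $[x\,y]$ commutes with a fixed-point-free permutation, whereas you use Lemma~\ref{lclo} plus the orbit--stabilizer theorem to show that $\dom(\tau)$ is a union of $G$-orbits, so that $|G|$ divides $\rank(\tau)$ for every $\tau\in T$. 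Your version proves a strictly stronger structural fact (divisibility of all ranks, not just the absence of rank one) at the cost of a slightly longer argument, and it also lets you dispatch the case $\min\{|A|,|B|\}=1$ by concluding $T\subseteq\{0\}$ rather than by the paper's cruder bound $|S|\leq 2n$; both yield the same final inequality.
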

\begin{proof}
Let $k=|A|$, so $|B|=n-k$.
We have $|G|\leq n$ (since $G$ is semiregular) and $|T|\leq|S_{\!\ta,\tb}|\leq\lam_n$ (by Proposition~\ref{pcase1}). 
If $k=1$ or $k=n-1$, then $|T|\leq |S_{\!\ta,\tb}|=n$, and so
$|S|\leq n+n=2n<\lam_n+1$ since $n\geq5$ (see the table in Lemma~\ref{lcom4}). 

Suppose $1<k<n-1$. The semigroup $S_{\!\ta,\tb}$ contains $|A|\cdot|B|=k(n-k)$ nilpotents $[x\,y]$.
Let $\sig$ be a nontrivial element of $G$. Then no nilpotent $[x\,y]$ commutes with $\sig$ (by Proposition~\ref{pcen}), and so
such a nilpotent cannot be in $T$. Thus $|T|\leq|S_{\!\ta,\tb}|-k(n-k)\leq\lam_n-k(n-k)$. But, since $n\geq5$ and $1<k<n-1$,
we have $k(n-k)\geq n$ by elementary algebra, and so
\[
|S|=|G|+|T|\leq n+\lam_n-k(n-k)\leq n+\lam_n-n=\lam_n<\lam_n+1.
\]
\end{proof}

We can now prove the main theorem of the paper regarding largest commutative subsemigroups of $\mi(X)$.

\begin{theorem}\label{tgen}
Let $X$ be a set with $n\geq1$ elements and let $m=\nc$. Then:
\begin{itemize}
  \item [\rm(1)] If $n<10$, then the maximum cardinality of a commutative subsemigroup of $\mi(X)$ is $2^n$, and
the semilattice $E(\mi(X))$ is the unique commutative subsemigroup of $\mi(X)$ of order~$2^n$.
  \item [\rm(2)] Suppose $n\geq10$. Then the maximum cardinality of a commutative subsemigroup of $\mi(X)$ is
\[ 
\lam_n+1=\sum_{r=0}^m\binom{m}{r}\binom{n-m}{r}r!+1.
\]
\begin{itemize}
  \item [\rm(a)] If $n$ is even, then there are exactly $\binom{n}{m}$, pairwise isomorphic,
commutative subsemigroups of $\mi(X)$ of order $\lam_n+1$, namely
the balanced null monoids $S_{\!\tk\!,\tl}\cup\{1\}$.
  \item [\rm(b)] If $n$ is odd, then there are exactly $2\binom{n}{m}$, pairwise isomorphic,
commutative subsemigroups of $\mi(X)$ of order $\lam_n+1$, namely
the balanced null monoids $S_{\!\tk\!,\tl}\cup\{1\}$.
\end{itemize}
\end{itemize}
\end{theorem}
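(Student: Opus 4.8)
The plan is to prove (1) and (2) together by strong induction on $n$, establishing the upper bound and the classification of the extremal semigroups at the same time. Statement~(1) is immediate from Lemma~\ref{leix1} together with the fact that $E(\mi(X))$ is itself commutative of order $2^n$, so I concentrate on~(2). For the lower bound, each balanced null monoid $S_{\!\tk\!,\tl}\cup\{1\}$ is commutative of order $\lam_n+1$ by Lemma~\ref{lmax} and Definition~\ref{dbal}, so it remains to show that every commutative subsemigroup has order at most $\lam_n+1$ and that equality forces a balanced null monoid. Since a commutative subsemigroup of maximum order is necessarily maximal, it suffices to analyse a maximal commutative $S$; note $0,1\in S$, as they commute with everything. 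Everything then hinges on the dichotomy furnished by Lemma~\ref{lres1}: either $S$ contains a \emph{splitting element} (a nontrivial idempotent $\vep\notin\{0,1\}$, or a permutation whose cycles do not all share one length) or it does not.

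First I would dispose of the splitting case, where I expect the main obstacle to lie. If such a $\gam$ exists, Lemmas~\ref{lres1} and~\ref{lres3} yield a partition $\{A,B\}$ with $|A|=k$, $1\le k\le n-1$, and $S\cong S_{\!\ta}\times S_{\!\tb}$, with $S_{\!\ta},S_{\!\tb}$ commutative in $\mi(A),\mi(B)$. Bounding each factor by the inductive hypothesis (using $2^j$ when $j<10$ and $\lam_j+1$ when $j\ge10$), the task reduces to verifying $f(k)f(n-k)<\lam_n+1$ for all $1\le k\le n-1$, where $f(j)=2^j$ for $j<10$ and $f(j)=\lam_j+1$ for $j\ge10$. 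When both parts are at least $10$ this is Lemma~\ref{lcom3}(2); when both are below $10$ it follows from $2^n<\lam_n+1$, valid for $n\ge10$ because $\lam_n>2\lam_{n-1}$ by Lemma~\ref{lcom4}; and in the mixed case it follows by iterating Lemma~\ref{lcom3}(1). Assembling these recursive estimates and separately checking the boundary value $n=10$ (where every split has both parts below $10$, so Lemma~\ref{lcom3} is not needed) is the delicate part. In each split one gets the strict inequality $|S|<\lam_n+1$, so no extremal semigroup arises here.

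Next I would treat the non-splitting case, which is where the extremal examples live. If $S$ has no splitting element, then every $\al\in S$ is a permutation or a nilpotent, since otherwise a suitable power $\al^N$ would be a nontrivial idempotent. Hence $S=G\cup T$ with $G=S\cap\sym(X)$ a semiregular abelian subgroup and $T=S\setminus G$ a commutative nilpotent subsemigroup (closed, since a product of nilpotents still has domain $\ne X$). If $G=\{1\}$, then $|S|=1+|T|\le 1+\lam_n$ by Theorem~\ref{tnil}(1), with equality precisely when $T$ is a balanced null semigroup (Theorem~\ref{tnil}(2), as $n\notin\{1,3\}$), giving exactly the monoids $S=S_{\!\tk\!,\tl}\cup\{1\}$. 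If $G\ne\{1\}$, I split on the set $C=C(T)$ of Notation~\ref{nabc}: when $C\ne\emptyset$, Proposition~\ref{pcase2} gives $|T|<\lam_n-n$, so $|S|=|G|+|T|<n+(\lam_n-n)=\lam_n$; when $C=\emptyset$, the argument of Proposition~\ref{pcase1} shows $T\subseteq S_{\!\ta,\tb}$ for $A=\bigcup_{\al\in T}\dom(\al)$ and $B=X-A$, so $S=G\cup T$ meets the hypotheses of Lemma~\ref{leix2} and $|S|<\lam_n+1$. Thus $G\ne\{1\}$ again gives only strict inequality, and the balanced null monoids are the sole extremal semigroups.

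Finally I would count them and verify pairwise isomorphism. The identity $1$ of $S_{\!\tk\!,\tl}\cup\{1\}$ is intrinsically recognizable, and from the non-identity elements one recovers $K=\bigcup_{\al\ne1}\dom(\al)$ and $L=\bigcup_{\al\ne1}\ima(\al)$ (every $x\in K$ and $y\in L$ occur, via the rank-one chains $[x\,y]$), so distinct ordered pairs $(K,L)$ give distinct monoids. For $n$ even, $|K|=|L|=m$ forces $\binom{n}{m}$ ordered pairs; for $n$ odd, $\{|K|,|L|\}=\{m,m+1\}$ gives $\binom{n}{m}+\binom{n}{m+1}=2\binom{n}{m}$ ordered pairs, matching (a) and (b). They are pairwise isomorphic: a bijection of $X$ carrying one balanced partition to another of the same shape induces an isomorphism, while the anti-automorphism $\al\mapsto\al\inv$ of $\mi(X)$, which is an isomorphism on the commutative null part and fixes $1$, sends $S_{\!\tk\!,\tl}$ to the set of reversed chains and thereby interchanges the two orientations in the odd case.
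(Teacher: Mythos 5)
Your proposal is correct and follows essentially the same route as the paper's proof: the same strong induction, the same dichotomy (splitting element from Lemma~\ref{lres1} giving $S\cong S_{\!\ta}\times S_{\!\tb}$ versus the semiregular-plus-nilpotent case), the same appeals to Lemmas~\ref{lcom3}, \ref{leix1}, \ref{leix2}, Theorem~\ref{tnil} and Propositions~\ref{pcase1}--\ref{pcase2}, and the same count of balanced null monoids. Your closing paragraph on distinctness and pairwise isomorphism of the extremal monoids is slightly more explicit than the paper's, but the substance is identical.
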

\begin{proof}
Statement (1) follows immediately from Lemma~\ref{leix1} and the fact that if $|X|=n$, then $|E(\mi(X))|=2^n$.

To prove (2), suppose $n\geq10$. Each of the balanced null monoids $S_{\!\tk\!,\tl}\cup\{1\}$
has order $\lam_n+1$ by Lemma~\ref{lmax}. If $n$ is even, then $|K|=|L|=m$, and so there are
$\binom{n}{m}$ balanced null semigroups $S_{\!\tk\!,\tl}$
(since we have $\binom{n}{m}$ choices for $K$ and $L=X-K$ is determined when $K$ has been selected). 
If $n$ is odd, then the number doubles since we have
$\binom{n}{m}$ such semigroups when $|K|=m$ and another $\binom{n}{m}$ when $|K|=n-m$.

Let $S$ be a commutative subsemigroup of $\mi(X)$ that is different from the balanced null monoids $S_{\!\tk\!,\tl}\cup\{1\}$.
Our objective is to prove that
\begin{equation}\label{eq1}
|S|<\lam_n+1=\sum_{r=0}^m\binom{m}{r}\binom{n-m}{r}r!+1.
\end{equation}
Proceeding by induction on $n=|X|$, we suppose that the statement is true for every $X$ with $10\leq|X|<n$.
Let $G=S\cap\sym(X)$ and $T=S-G$.

Suppose $G$ is a semiregular subgroup of $\sym(X)$ and $T$ is a nilpotent semigroup. If $G$ is trivial, then $T$ is not a 
balanced null semigroup, and hence $|S| < \lam_n +1$ by Theorem \ref{tnil}. So assume that $G \ne \{1\}$.
Let  
\[
C=\{c\in X:\mbox{$c\in\dom(\al)\cap\ima(\bt)$ for some $\al,\bt\in T$}\}. 
\]
If $C=\emptyset$,
then $T\subseteq S_{\!\ta,\tb}$, where $\{A,B\}$ is a partition of $X$, and so $|S|<\lam_n+1$
by Lemma~\ref{leix2}. Suppose $C\ne\emptyset$.
Then $|G|\leq n$ (since $G$ is semiregular)
and $|T|<\lam_n-n$ (by Proposition~\ref{pcase2}). Thus $|S|=|G|+|T|<n+\lam_n-n=\lam_n<\lam_n+1$.

Suppose $G$ is not a semiregular subgroup of $\sym(X)$ or $T$ is a not a nilpotent semigroup.
Then, by Lemmas~\ref{lres1} and~\ref{lres3},
there is a partition $\{A,B\}$ of $X$ such that $S\cong S_{\!\ta}\times S_{\!\tb}$, where $S_{\!\ta}$ is a commutative subsemigroup
of $\mi(A)$ and $S_{\!\tb}$ is a commutative subsemigroup of $\mi(B)$.
Notice that $1\leq|A|,|B|<|X|=n$. We may assume that $|A|\leq|B|$. 
Let $k=|A|$. Then $1\leq k<n$ and $|B|=n-k$. We consider three possible cases.
\vskip 1mm
\noindent{\bf Case 1.} $k<10$ and $n-k<10$.
\vskip 1mm
Then, by Lemma~\ref{leix1}, $|S_{\!\ta}|\leq2^k$ and $|S_{\!\tb}|\leq2^{n-k}$, and so
\[
|S|=|S_{\!\ta}|\cdot|S_{\!\tb}|\leq2^k\cdot2^{n-k}=2^n<\lam_n+1,
\]
where the last inequality is true since $n\geq10$ (see Lemma~\ref{lcom4} and the table in its proof).
\vskip 1mm
\noindent{\bf Case 2.} $k<10$ and $n-k\geq10$.
\vskip 1mm
Then, $|S_{\!\ta}|\leq2^k$ (by Lemma~\ref{leix1}) and $|S_{\!\tb}|\leq\lam_{n-k}+1$ (by Theorem~\ref{tnil} and the inductive hypothesis).
Thus, by (1) of Lemma~\ref{lcom3},
\[
|S|=|S_{\!\ta}|\cdot|S_{\!\tb}|\leq2^k(\lam_{n-k}+1)<\lam_n+1.
\]
\noindent{\bf Case 3.}  $k\geq10$ and $n-k\geq10$.
\vskip 1mm
Then, by Theorem~\ref{tnil} and the inductive hypothesis, $|S_{\!\ta}|\leq\lam_k+1$ and $|S_{\!\tb}|\leq\lam_{n-k}+1$.
Thus, by (2) of Lemma~\ref{lcom3},
\[
|S|=|S_{\!\ta}|\cdot|S_{\!\tb}|\leq(\lam_k+1)(\lam_{n-k}+1)<\lam_n+1.
\]
Hence, in all cases, $|S|<\sum_{r=0}^m\binom{m}{r}\binom{n-m}{r}r!+1$, which concludes the proof of (2).
\end{proof}

It follows from Theorem~\ref{tgen} that every symmetric inverse semigroup $\mi(X)$
has, up to isomorphism, a unique commutative subsemigroup of maximum order.
In comparison, the symmetric group $\sym(X)$ has, up to isomorphism, a unique
abelian subgroup of maximum order if $|X|=3k$ or $|X|=3k+2$, and two abelian subgroups of maximum order
if $|X|=3k+1$ \cite[Theorem~1]{BuGo89}.

\section{The Clique Number and Diameter of $\cg(\mi(X))$}\label{scdg}

In this section, we determine the clique number of the commuting graph of $\mi(X)$
and the diameter of the commuting graph of every nonzero ideal of $\mi(X)$. The exception is 
the case of $\cg(\mi(X))$ when $n=|X|$ is odd and composite, and not a prime power, where we are only able to say
that the diameter is either $4$ or $5$. 

Let $\Gamma$ be a simple graph, that is, $\Gamma=(V,E)$, where $V$ is a finite
non-empty set of vertices and $E\subseteq\{\{u,v\}:u,v\in V, u\ne v\}$ is a set of edges.
We will write $u-v$ to mean that $\{u,v\}\in E$.
(If $\cg(S)$ is the commuting graph of a semigroup $S$, then for all vertices
$a$ and $b$ of $\cg(S)$, $a-b$ if and only if $a\ne b$ and $ab=ba$.)

A subset $K$ of $V$ is called a \emph{clique} in $\Gamma$ if $u-v$ for all distinct $u,v\in K$. The \emph{clique number}
of $\Gamma$ is the largest integer $r$ such that $\Gamma$ has a clique $K$ with $|K|=r$.

Let $u,w\in V$. A \emph{path} in $\Gamma$ of length $m-1$ ($m\geq1$) from $u$ to $w$
is a sequence of pairwise distinct vertices
$u=v_1,v_2,\ldots,v_m=w$ such that $v_i-v_{i+1}$ for every $i\in\{1,\ldots,m-1\}$.
The \emph{distance} between vertices $u$ and $w$, denoted $d(u,w)$, is the smallest integer $k\geq0$ such that
there is a path of length $k$ from $u$ to $w$. 
If there is no path from $u$ to $w$, we
say that the distance between $u$ and $w$ is infinity, and write $d(u,w)=\infty$. The maximum
distance $\max\{d(u,w):u,w\in V\}$ between vertices of $\Gamma$ is called the \emph{diameter}
of $\Gamma$. Note that the diameter of $\Gamma$ is finite if and only if $\Gamma$ is connected.

It follows easily from Proposition~\ref{pcen} that the only central elements of $\mi(X)$
are the zero and identity transformations. Therefore, the following result is an immediate
corollary of Theorem~\ref{tgen}. (Note that if $|X|=1$, then $\mi(X)$ is a commutative semigroup.)

\begin{cor}\label{ccli}
Let $X$ be a set with $n\geq2$ elements and let $m=\nc$. Then:
\begin{itemize}
  \item [\rm(1)] If $n<10$, then the clique number of the commuting graph of $\mi(X)$ is $2^n-2$.
  \item [\rm(2)] If $n\geq10$, then the clique number of the commuting graph of $\mi(X)$ is
\[ 
\lam_n-1=\sum_{r=0}^m\binom{m}{r}\binom{n-m}{r}r!-1.
\]
\end{itemize}
\end{cor}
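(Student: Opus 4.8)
The plan is to pin down the clique number of $\cg(\mi(X))$ by reducing it to the maximum order $N$ of a commutative subsemigroup of $\mi(X)$, a quantity already computed in Theorem~\ref{tgen} ($N=2^n$ when $n<10$, and $N=\lam_n+1$ when $n\geq10$). The link between the two quantities is the remark preceding the statement that the center of $\mi(X)$ is precisely $\{0,1\}$, so that the vertex set of $\cg(\mi(X))$ is $\mi(X)-\{0,1\}$. I will establish that the clique number equals $N-2$, which yields $2^n-2$ and $\lam_n-1$ in the two ranges.

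For the upper bound, let $K$ be a clique of maximum size $\omega$; by definition its elements pairwise commute and none lies in $\{0,1\}$. Since the generators commute pairwise, the subsemigroup $\langle K\rangle$ they generate is commutative, and adjoining the zero and the identity keeps it a commutative subsemigroup because $0$ absorbs and $1$ fixes every element. Thus $T=\langle K\rangle\cup\{0,1\}$ is a commutative subsemigroup, and since $K\cap\{0,1\}=\emptyset$ it contains the $\omega+2$ distinct elements of $K\cup\{0,1\}$. Consequently $\omega+2\leq|T|\leq N$, giving $\omega\leq N-2$.

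For the matching lower bound I would exhibit a commutative subsemigroup of order $N$ that contains both central elements and then delete them. When $n<10$, the semilattice $E(\mi(X))$ works, since $0$ and $1$ are both idempotents and hence lie in $E(\mi(X))$. When $n\geq10$, any balanced null monoid $S_{\!\tk\!,\tl}\cup\{1\}$ works: it has order $\lam_n+1=N$ by Lemma~\ref{lmax}, it contains $1$ by construction, and it contains $0$ as the empty join (the case $r=0$ in Definition~\ref{dnul}). In either case, removing $\{0,1\}$ from this maximum semigroup leaves $N-2$ pairwise commuting non-central elements, i.e.\ a clique, so $\omega\geq N-2$. Combining the two bounds gives $\omega=N-2$.

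Once $Z(\mi(X))=\{0,1\}$ is granted, the argument is purely formal, and I do not anticipate a genuine obstacle. The only point requiring attention is the two-element bookkeeping of the center: I must confirm that the extremal commutative subsemigroups really do contain both $0$ and $1$ (so that exactly two, and not fewer, elements are discarded when passing to the clique), which is immediate from the explicit descriptions in Theorem~\ref{tgen}.
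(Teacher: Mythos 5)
Your argument is correct and is exactly the route the paper intends: it states that $Z(\mi(X))=\{0,1\}$ and then declares the corollary immediate from Theorem~\ref{tgen}, which is precisely the reduction you carry out (clique $\cup\,\{0,1\}$ generates a commutative subsemigroup for the upper bound; the extremal commutative subsemigroups minus $\{0,1\}$ for the lower bound). You have simply supplied the bookkeeping the paper leaves implicit, including the correct observation that both extremal semigroups contain $0$ and $1$.
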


It is well known (see \cite[Exercises~5.11.2 and~5.11.4]{Ho95}) 
that $\mi(X)$ has exactly $n+1$ ideals, $J_0,J_1,\ldots,J_n$, where
\[
J_r=\{\al\in\mi(X):\rank(\al)\leq r\}
\]
for $0\leq r\leq n$.
Each ideal $J_r$ is principal and any $\al\in\mi(X)$ of rank $r$ generates $J_r$.
The ideal $J_0=\{0\}$ consists of the zero transformation.
Our next objective is to find the diameter of the commuting graph of every proper nonzero ideal $\mi(X)$.

\begin{lemma}\label{ldia1}
Let $n\geq2$. Suppose $\al\in\mi(X)-\{0,1\}$ is not an $n$-cycle or a nilpotent of index $n$.
Then there exists an idempotent $\vep\in\mi(X)-\{0,1\}$ such that $\rank(\vep)\leq\rank(\al)$ and $\al\vep=\vep\al$.
\end{lemma}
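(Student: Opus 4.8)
The plan is to produce, in each case, an explicit idempotent and to verify that it commutes with $\al$ directly from the join decomposition, rather than invoking the full force of Proposition~\ref{pcen}. First I would write $\al=\rho_1\jo\cdots\jo\rho_k\jo\tau_1\jo\cdots\jo\tau_m$ as in Proposition~\ref{pdec}, recalling that an idempotent of $\mi(X)$ is precisely a partial identity, determined by its domain $Y$, with $\rank(\vep)=|Y|$; such a $\vep$ lies in $\mi(X)-\{0,1\}$ exactly when $\emptyset\ne Y\subsetneq X$. Since $\al\ne0$ we have $k+m\ge1$ and $\rank(\al)\ge1$.

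The main dichotomy is whether $\spa(\al)=X$. If $\spa(\al)\ne X$, I would pick any $u\in X-\spa(\al)$ and let $\vep$ be the partial identity on $\{u\}$. Because $u\notin\dom(\al)$ and $u\notin\ima(\al)$, neither $\vep\al$ nor $\al\vep$ is defined anywhere, so $\al\vep=0=\vep\al$; here $\rank(\vep)=1\le\rank(\al)$ and $\{u\}\ne X$ since $n\ge2$. If instead $\spa(\al)=X$, the spans of the components $\rho_i,\tau_j$ partition $X$, and I would take $\vep$ to be the partial identity on the span of a single well-chosen component $C$. Using that distinct components are completely disjoint, a short computation shows $\al\vep=\vep\al=C$: the domain of each product is exactly $\dom(C)$, and both products agree with $C$ there.

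The delicate point, and the step I expect to be the real obstacle, is the rank bound $\rank(\vep)\le\rank(\al)$ together with $\spa(C)\ne X$, because the span of a chain of length $\ell$ has $\ell+1$ vertices while it contributes only $\ell$ to $\rank(\al)$. To handle this I would split the case $\spa(\al)=X$ according to whether $\al$ has a cycle. If some cycle $\rho_i$ is present I take $C=\rho_i$; then $\rank(\vep)$ equals the length of $\rho_i$, which is a summand of $\rank(\al)$, and $\spa(\rho_i)=X$ would force $\al$ to be that single cycle, i.e.\ an $n$-cycle, contradicting the hypothesis. If $\al$ has no cycle it is a join of chains, and the excluded nilpotents of index $n$ are exactly the single chains spanning $X$; hence here $m\ge2$, and taking $C=\tau_1$ gives $\rank(\vep)=\ell_1+1\le\ell_1+\ell_2\le\rank(\al)$ while the presence of $\tau_2$ guarantees $\spa(\tau_1)\ne X$. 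In every case $\vep$ is then a proper nonzero idempotent commuting with $\al$ with $\rank(\vep)\le\rank(\al)$, as required. The two hypotheses are used precisely to rule out the only configurations, a single full cycle or a single full chain, in which no suitable component $C$ is available.
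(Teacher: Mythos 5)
Your proof is correct, and although it rests on the same basic idea as the paper's --- take $\vep$ to be a partial identity attached to the join decomposition of Proposition~\ref{pdec} --- your case analysis is organized differently and is, on one point, sharper. The paper splits according to whether $\al$ contains both a cycle and a chain (taking $\vep=\al^p$ for a suitable power $p$), only chains (taking $\dom(\vep)=\spa(\tau_1)$), or only cycles (taking $\dom(\vep)=\spa(\rho_1)$), and checks commutation via Proposition~\ref{pcen}; you split on whether $\spa(\al)=X$ and check commutation by direct computation with completely disjoint components, which is equally valid. The payoff of your split is precisely the ``delicate point'' you flag: when $\al$ is a single chain with $\spa(\al)\ne X$ (the paper's sub-case $k=0$, $m=1$), the idempotent with domain $\spa(\tau_1)$ has rank $\rank(\al)+1$, so the paper's closing assertion that $\rank(\vep)\le\rank(\al)$ ``in all cases'' fails for that choice; your rank-one idempotent on a point outside $\spa(\al)$ handles this sub-case correctly and preserves the rank bound, which is what is actually needed when the lemma is invoked inside the ideal $J_r$ in Theorem~\ref{tpro}. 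So your argument not only proves the statement but quietly repairs a small slip in the paper's own proof; the only thing you give up is the slicker one-line treatment $\vep=\al^p$ of the mixed cycle-and-chain case.
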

\begin{proof}
Let $\al=\rho_1\jo\cdots\jo\rho_k\jo\tau_1\jo\cdots\jo\tau_m$ be the decomposition of $\al$
as in Proposition~\ref{pdec}. Suppose $k,m\geq1$ (that is, $\al$ contains at least one cycle and at least one chain).
Then there is an integer $p\geq1$ such that $\vep=\al^p$ is an idempotent different from $0$ and $1$.
Clearly, $\al\vep=\vep\al$.

Suppose $k=0$. Since $\al$ is not a nilpotent of index $n$, $\spa(\tau_1)\ne X$. Let $\vep$
be the idempotent with $\dom(\vep)=\spa(\tau_1)$. Then $\vep\ne1$ (since $\spa(\tau_1)\ne X$), $\vep\ne0$
(since $\spa(\tau_1)\ne\emptyset$), and $\al\vep=\vep\al$ by Proposition~\ref{pcen}.
Suppose $m=0$. Then $k\geq2$ since $\al$ is not an $n$-cycle. Then $\al\vep=\vep\al$
for the idempotent $\vep$ with $\dom(\vep)=\spa(\rho_1)$.

Note that in all cases, $\rank(\vep)\leq\rank(\al)$.
\end{proof}

\begin{lemma}\label{ldia2}
Let $n\geq4$. Suppose $\al,\bt\in\mi(X)-\{0,1\}$ such that neither $\al$ nor $\bt$ is an $n$-cycle.
Then in the commuting graph $\cg(\mi(X))$, there is a path from $\al$ to $\bt$ of length at most $4$
such that all vertices in the path have rank at most $\max\{\rank(\al),\rank(\bt)\}$.
\end{lemma}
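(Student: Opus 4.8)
The plan is to prove Lemma~\ref{ldia2} by exhibiting an explicit short path in $\cg(\mi(X))$ between $\al$ and $\bt$, using idempotents as the connecting vertices. The key tool is Lemma~\ref{ldia1}: since neither $\al$ nor $\bt$ is an $n$-cycle, and provided neither is a nilpotent of index $n$, each commutes with some nontrivial idempotent of no larger rank. The strategy is then to connect $\al$ to an idempotent $\vep_{\al}$, connect $\bt$ to an idempotent $\vep_{\bt}$, and finally connect $\vep_{\al}$ to $\vep_{\bt}$; since any two distinct idempotents of $\mi(X)$ commute (the idempotents form a semilattice), the middle link is essentially free. This would give a path $\al - \vep_{\al} - \vep_{\bt} - \bt$ of length~$3$.

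First I would dispose of the case where $\al$ or $\bt$ is a nilpotent of index $n$, since Lemma~\ref{ldia1} does not apply directly there. If, say, $\al$ is a nilpotent of index $n$, then $\al = [x_0\,x_1\ldots\,x_{n-1}]$ is a single chain spanning all of $X$. Here I would produce an idempotent commuting with $\al$ of smaller rank by a direct construction: for instance, taking $\vep$ with $\dom(\vep) = \{x_0\}$ (an initial segment of the chain) commutes with $\al$ by Proposition~\ref{pcen}, is neither $0$ nor $1$ (as $n\geq 2$), and has rank $1 \leq \rank(\al)$. With this patch in hand, \emph{every} $\al \in \mi(X)-\{0,1\}$ that is not an $n$-cycle commutes with a nontrivial idempotent $\vep_{\al}$ of rank at most $\rank(\al)$.

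The main subtlety, and the step I expect to require the most care, is ensuring the three (or four) vertices on the path are pairwise \emph{distinct}, as the definition of a path demands, and that the rank bound is respected throughout. The rank bound is automatic: each $\vep_{\al}, \vep_{\bt}$ has rank at most $\rank(\al), \rank(\bt)$ respectively, hence at most $\max\{\rank(\al),\rank(\bt)\}$. For distinctness, the problematic coincidences are $\al = \vep_{\bt}$ or $\bt = \vep_{\al}$ or $\vep_{\al} = \vep_{\bt}$; these can collapse the path or make it ill-defined. When $\vep_{\al} = \vep_{\bt}$ the natural path shortens to $\al - \vep_{\al} - \bt$ of length~$2$, which is still at most~$4$. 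When one of $\al,\bt$ happens itself to be an idempotent equal to the chosen connector, one simply drops the redundant vertex. In the remaining generic case the four symbols $\al, \vep_{\al}, \vep_{\bt}, \bt$ are distinct and the path $\al - \vep_{\al} - \vep_{\bt} - \bt$ of length~$3$ works. I would organize the argument so that in every configuration the resulting path has length at most~$4$, which is comfortably within the stated bound and leaves room for the edge cases.

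The reason the stated bound is $4$ rather than the length-$3$ path one typically obtains is presumably to accommodate the case $\vep_{\al}=\vep_{\bt}$ gracefully, or a degenerate situation where an extra intermediate idempotent must be inserted to guarantee distinctness (the hypothesis $n\geq 4$ gives enough room in $X$ to choose, if necessary, a third idempotent distinct from both $\vep_{\al}$ and $\vep_{\bt}$ that still commutes with them, since all idempotents mutually commute). Thus the hypothesis $n\geq 4$ is what I would lean on to break any remaining coincidences among the chosen idempotents, and the fact that $E(\mi(X))$ is a semilattice is what makes the connecting edges between idempotents free.
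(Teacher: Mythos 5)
Your main case---neither $\al$ nor $\bt$ a nilpotent of index $n$---is exactly the paper's argument: Lemma~\ref{ldia1} supplies idempotents $\vep_1,\vep_2\in\mi(X)-\{0,1\}$ with the right rank bounds, and since idempotents commute you get $\al-\vep_1-\vep_2-\bt$. But your patch for the remaining case is wrong. If $\al=[x_0\,x_1\ldots\,x_{n-1}]$ is a nilpotent of index $n$, the idempotent $\vep$ with $\dom(\vep)=\{x_0\}$ does \emph{not} commute with $\al$: condition (2) of Proposition~\ref{pcen} requires $\bt$ to carry initial segments of chains of $\al$ onto \emph{terminal} segments, so it forces $x_0\vep=x_0$ to be the last vertex of some chain of $\al$, and the only chain of $\al$ ends at $x_{n-1}\ne x_0$. (Concretely, $x_0(\vep\al)=x_1$ while $x_0\notin\dom(\al\vep)$, since $x_0\al=x_1\notin\dom(\vep)$.) Worse, the failure is not in your particular choice: as Lemma~\ref{ldia4}(1) records, the only elements of $\mi(X)-\{0,1\}$ commuting with a nilpotent of index $n$ are its powers $\al^q$, all of which are nonzero nilpotents, so \emph{no} nontrivial idempotent commutes with such an $\al$ and no one-step hop from $\al$ to an idempotent exists. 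This obstruction---not the bookkeeping about coincidences among $\vep_{\al},\vep_{\bt}$, which only ever shortens the path---is the reason the lemma says $4$ rather than $3$.

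The repair, which is what the paper does, is to first step from $\al$ to a power of itself: $\al^{n-1}=[x_0\,x_{n-1}]$ has rank $1$, commutes with $\al$, and \emph{does} commute with the idempotent $\vep_1$ whose domain is $\{x_0,x_{n-1}\}$ (the full span of the chain $[x_0\,x_{n-1}]$, so Proposition~\ref{pcen}(2) holds with that chain mapped onto itself). This gives $\al-[x_0\,x_{n-1}]-\vep_1-\vep_2-\bt$ of length $4$ when only $\al$ is an index-$n$ nilpotent. When both $\al$ and $\bt$ are index-$n$ nilpotents one connects the two rank-one nilpotents $[x_0\,x_{n-1}]$ and $[y_0\,y_{n-1}]$ either directly (if their spans are disjoint) or through a rank-one idempotent supported on a point outside both spans---and this is where the hypothesis $n\geq4$ is actually needed. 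As written, your proposal has a genuine gap in every configuration involving a nilpotent of index $n$.
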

\begin{proof}
Suppose neither $\al$ nor $\bt$ is a nilpotent of index $n$. Then, by Lemma~\ref{ldia1}, there are idempotents
$\vep_1,\vep_2\in\mi(X)-\{0,1\}$ such that $\rank(\vep_1)\leq\rank(\al)$, $\rank(\vep_2)\leq\rank(\bt)$,
$\al-\vep_1$, and $\vep_2-\bt$. Since idempotents in $\mi(X)$ commute,
$\al-\vep_1-\vep_2-\bt$.

Suppose $\al=[y_1\,y_2\ldots\,y_n]$ is a nilpotent of index $n$ and $\bt$ is not a nilpotent of index $n$.
Let $\vep_1$ be the idempotent with $\dom(\vep_1)=\{y_1,y_n\}$ (note that $\rank(\vep_1)\leq\rank(\al)$)
and $\vep_2$ be an idempotent different from
$0$ and $1$ such that $\rank(\vep_2)\leq\rank(\bt)$ and $\vep_2-\bt$ (such an idempotent exists by Lemma~\ref{ldia1}).
Then $\al-[y_1\,y_n]-\vep_1-\vep_2-\bt$.

Finally, suppose $\al=[y_1\,y_2\ldots\,y_n]$ and $\bt=[x_1\,x_2\ldots\,x_n]$ are nilpotents of index $n$.
If $\{y_1,y_n\}\cap\{x_1,x_n\}=\emptyset$, then $[y_1\,y_n]$ and $[x_1\,x_n]$ commute, and so
$\al-[y_1\,y_n]-[x_1\,x_n]-\bt$. Suppose $\{y_1,y_n\}\cap\{x_1,x_n\}\ne\emptyset$. Then, since $n\geq4$,
there is $z\in X-\{y_1,y_n,x_1,x_n\}$. Let $\vep$ be the idempotent with $\dom(\vep)=\{z\}$.
Then, by Proposition~\ref{pcen}, $\al-[y_1\,y_n]-\vep-[x_1\,x_n]-\bt$.
\end{proof}

\begin{lemma}\label{ldia4}
Let $n\geq2$. Suppose $\al,\bt\in\mi(X)-\{0,1\}$ with $\al\bt=\bt\al$. Then
\begin{itemize}
  \item [\rm(1)] If $\al=[x_1\ldots\,x_n]$ is a nilpotent of index $n$, 
then there is $q\in\{1,\ldots,n-1\}$ such that $\bt=\al^q$.
  \item [\rm(2)] If $\al=(x_0\,x_1\ldots\,x_{n-1})$ is an $n$-cycle,
then there is $q\in\{1,\ldots,n-1\}$ such that $\bt=\al^q$.
\end{itemize}
\end{lemma}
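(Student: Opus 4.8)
The plan is to read off the structure of $\bt$ directly from Proposition~\ref{pcen}, exploiting two features shared by both cases: since $\al$ is an $n$-cycle or a nilpotent of index $n$, we have $\spa(\al)=X$ (so part~(3) of that proposition never applies), and $\al$ has \emph{exactly one} constituent chain (resp.\ cycle), which forces the ``target'' chain $\tau'$ (resp.\ cycle $\rho'$) delivered by the proposition to be $\al$ itself. Once this is observed, the commutation condition simply says that $\bt$ acts as a shift (resp.\ rotation), and the whole content of the lemma is identifying that shift with the appropriate power of $\al$.

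For~(1), write $\al=[x_1\,x_2\ldots\,x_n]$, so $x_i\al=x_{i+1}$ for $1\le i\le n-1$ and $\dom(\al)=\{x_1,\ldots,x_{n-1}\}$. Since $\bt\ne0$ and $\dom(\bt)\subseteq X=\{x_1,\ldots,x_n\}=\spa(\al)$, some $x_i$ lies in $\dom(\bt)$, so part~(2) of Proposition~\ref{pcen} applies. As the only chain of $\al$ is $[x_1\ldots x_n]$ itself, the proposition returns an integer $p$ with $\dom(\bt)=\{x_1,\ldots,x_p\}$ (an initial segment of the chain) and $x_i\bt=x_{n-p+i}$ for $1\le i\le p$ (the matching terminal segment). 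I would then set $q=n-p$ and check that $\dom(\al^q)=\{x_1,\ldots,x_p\}$ and $x_i\al^{q}=x_{i+q}=x_{n-p+i}$, so that $\bt$ and $\al^q$ have the same domain and agree on it; hence $\bt=\al^q$. Finally $1\le p\le n$ gives $0\le q\le n-1$, and the value $q=0$ (i.e.\ $p=n$) would force $\bt=1$, which is excluded; thus $q\in\{1,\ldots,n-1\}$.

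For~(2), write $\al=(x_0\,x_1\ldots\,x_{n-1})$, a permutation with $x_i\al=x_{i+1}$ (indices mod $n$) and $\dom(\al)=X$. Since $\bt\ne0$, some $x_i\in\dom(\bt)$, so part~(1) of Proposition~\ref{pcen} applies; because $\al$ has a single cycle, the cycle $\rho'$ it produces is $\al$ itself, yielding $j\in\{0,\ldots,n-1\}$ with $\dom(\bt)=X$ and $x_i\bt=x_{i+j}$ for all $i$ (indices mod $n$). This is precisely the action of $\al^{j}$ (also a permutation of $X$), so $\bt=\al^{j}$; and $j=0$ would give $\bt=1$, which is excluded, so $j\in\{1,\ldots,n-1\}$.

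No genuine obstacle is expected here: the substance is entirely contained in Proposition~\ref{pcen}, and the only care required is the index bookkeeping that matches the forced ``shift by $n-p$'' (resp.\ ``rotation by $j$'') to the correct power of $\al$, together with ruling out the degenerate value of the exponent that would yield the identity.
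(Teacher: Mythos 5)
Your proof is correct and follows essentially the same route as the paper: both arguments apply Proposition~\ref{pcen} and use the fact that $\al$ consists of a single chain (resp.\ cycle) spanning all of $X$, so that $\bt$ is forced to be the shift $x_i\mapsto x_{n-p+i}$ (resp.\ the rotation by $j$), which is then identified with $\al^{n-p}$ (resp.\ $\al^{j}$), with the degenerate exponent excluded because $\bt\ne 1$. The only difference is cosmetic: you make explicit the verification that $\al^{q}$ has the same domain as $\bt$, which the paper leaves implicit.
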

\begin{proof}
Suppose $\al=[x_1\ldots\,x_n]$.
Since $\bt\notin\{0,1\}$, it follows by Proposition~\ref{pcen} that there is $t\in\{1,\ldots,n-1\}$ 
such that $\dom(\bt)\cap\{x_1,\ldots,x_n\}=\{x_1,\ldots,x_t\}$ and
\[
x_1\bt=x_{n-t+1},\,\,x_2\bt=x_{n-t+2},\ldots,\,\,x_t\bt=x_n.
\]
Thus $\bt=\al^q$, where $q=n-t$, and $q\notin\{0,n\}$ (since $1\leq t\leq n-1$).
We have proved (1).

Suppose $\al=(x_0\,x_1\ldots\,x_{n-1})$. 
Since $\bt\ne0$, $\{x_0,x_1,\ldots,x_{n-1}\}\subseteq\dom(\bt)$ by Proposition~\ref{pcen}.
Let $x_q=x_0\bt$, where $q\in\{0,1,\ldots,n-1\}$, and note that $q\ne0$ since $\al\ne1$. 
Then, by Proposition~\ref{pcen}, $x_i\bt=x_{q+i}$ for every $i\in\{0,\ldots,n-1\}$
(where $x_{q+i}=x_{q+i-n}$ if $q+i\geq n$). Thus $\bt=\al^q$.
We have proved (2).
\end{proof}

\begin{lemma}\label{ldia5}
Let $n\geq3$. Then there are nilpotents $\al,\bt\in\mi(X)$ of index $n$ such that $d(\al,\bt)=4$.
\end{lemma}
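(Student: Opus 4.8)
The plan is to exhibit one explicit pair of index-$n$ nilpotents and compute $d(\al,\bt)$ directly, exploiting the fact (Lemma~\ref{ldia4}(1)) that the neighbourhood of an index-$n$ nilpotent in $\cg(\mi(X))$ consists only of its own powers. Identify $X=\{1,2,\ldots,n\}$ and put $\al=[1\,2\,\ldots\,n]$, the chain with $i\mapsto i+1$, and $\bt=[n\,(n-1)\,\ldots\,1]$, the chain with $i\mapsto i-1$. Each is a single chain of length $n-1$ spanning $X$, hence a nilpotent of index $n$ (and of rank $n-1$). For $1\leq q\leq n-1$ we have $\al^q\colon i\mapsto i+q$ with $\dom(\al^q)=\{1,\ldots,n-q\}$ and $\bt^q\colon i\mapsto i-q$ with $\dom(\bt^q)=\{q+1,\ldots,n\}$. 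By Lemma~\ref{ldia4}(1), the vertices adjacent to $\al$ are exactly $\al^2,\ldots,\al^{n-1}$, and those adjacent to $\bt$ are exactly $\bt^2,\ldots,\bt^{n-1}$.

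Next I would prove $d(\al,\bt)\geq4$ in three steps. First, $\al\neq\bt$ and they are non-adjacent: every neighbour $\al^q$ ($q\geq2$) of $\al$ has rank $n-q\leq n-2$, whereas $\bt$ has rank $n-1$, so $\bt$ is not among the neighbours of $\al$; thus $d\geq2$. Second, $\al$ and $\bt$ have no common neighbour, since $1\in\dom(\al^q)$ for every neighbour $\al^q$ of $\al$ while $1\notin\dom(\bt^{q'})=\{q'+1,\ldots,n\}$ for every neighbour $\bt^{q'}$ of $\bt$; hence $d\geq3$. Third, and crucially, no neighbour of $\al$ commutes with any neighbour of $\bt$, which rules out a path $\al-\gam_1-\gam_2-\bt$ and yields $d\geq4$.

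For this third step I would compute, for $q,q'\in\{2,\ldots,n-1\}$, that $\dom(\al^q\bt^{q'})=\{\max(1,q'-q+1),\ldots,n-q\}$ and $\dom(\bt^{q'}\al^q)=\{q'+1,\ldots,\min(n,n-q+q')\}$. Both are nonempty, and since $q\geq2$ the least element of the first is at most $q'-1$, strictly below the least element $q'+1$ of the second. Hence the two products have different domains, so $\al^q\bt^{q'}\neq\bt^{q'}\al^q$ and $\al^q$, $\bt^{q'}$ do not commute.

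Finally, for the upper bound I would exhibit a length-$4$ path valid for all $n\geq3$. Choose any $z\in\{2,\ldots,n-1\}$ (possible since $n\geq3$) and let $\vep$ be the idempotent with $\dom(\vep)=\{z\}$. Since $\al^{n-1}=[1\,n]$ is adjacent to $\al$, $\bt^{n-1}=[n\,1]$ is adjacent to $\bt$, and $\vep$ commutes with both $[1\,n]$ and $[n\,1]$ (because $z\notin\spa([1\,n])=\spa([n\,1])=\{1,n\}$, by Proposition~\ref{pcen}), the sequence $\al-[1\,n]-\vep-[n\,1]-\bt$ is a path of pairwise distinct vertices. Thus $d(\al,\bt)\leq4$, and with the lower bound $d(\al,\bt)=4$. (For $n\geq4$ the upper bound also follows from Lemma~\ref{ldia2}; the explicit path additionally covers the case $n=3$.) I expect the only real obstacle to be the third step: one must verify carefully that the two composite domains are nonempty and have distinct least elements in both cases $q\geq q'$ and $q<q'$.
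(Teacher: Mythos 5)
Your proposal is correct and follows essentially the same route as the paper: it uses the identical witnesses (a maximal chain $\al=[1\,2\ldots n]$ and its reversal $\bt$), invokes Lemma~\ref{ldia4} to confine any intermediate vertex to a power of $\al$ or of $\bt$, and then shows no such pair of powers commutes. The only differences are cosmetic: you verify the non-commuting of $\al^q$ and $\bt^{q'}$ by a uniform computation of the two product domains for all $q,q'\geq2$ (where the paper first reduces to powers $p,q\geq\lceil n/2\rceil$ and reduces length-$2$ paths to length-$3$ paths), and you give one explicit length-$4$ path valid for all $n\geq3$ instead of citing Lemma~\ref{ldia2} for $n\geq4$.
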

\begin{proof}
Let $\al=[x_1\,x_2\ldots\,x_k\,y_1\,y_2\ldots\,y_m]$ and $\bt=[y_m\ldots\,y_2\,y_1\,x_k\ldots\,x_2\,x_1]$, 
where $k+m=n$ and $k=\lceil\frac{n}{2}\rceil$. If $n\geq4$, then $d(\al,\bt)\leq4$ by Lemma~\ref{ldia2}.
If $n=3$, then $\al=[x_1\,x_2\,y_1]-[x_1\,y_1]-\vep-[y_1\,x_1]-[y_1\,x_2\,x_1]=\bt$,
where $\vep$ is the idempotent with $\dom(\vep)=\{x_2\}$, so $d(\al,\bt)\leq4$.

Note that $\al$ and $\bt$ do not commute, so $d(\al,\bt)\geq2$. Suppose $\al-\gam-\del-\bt$ is a path
from $\al$ to $\bt$ of length $3$. By Lemma~\ref{ldia4}, $\gam=\al^p$ and $\del=\bt^q$ for some
$p,q\in\{1,\ldots,n-1\}$. We may assume that $p\geq k$. (If not, then there exists an integer $t$ such that
$k\leq pt\leq n-1$, and so $\al^p$ can be replaced with $\al^{pt}=(\al^p)^t$ in the path.)
Similarly, we may assume that $q\geq k$. Then
\[
\al^p=[x_1\,y_i]\jo[x_2\,y_{i+1}]\jo\cdots\jo[x_{m-i+1}\,y_m]\mbox{ and }
\bt^q=[y_m\,x_j]\jo[y_{m-1}\,x_{j-1}]\jo\cdots\jo[y_{m-j+1}\,x_1],
\]
for some $i\in\{1,\ldots,m\}$ and $j\in\{1,\ldots,k\}$ (with $j\in\{1,\ldots,k-1\}$ when $n$ is odd).
But then $\al^p$ and $\bt^q$ do not commute (since $x_{m-i+1}(\al^p\bt^q)=x_j$ and $x_{m-i+1}\notin\dom(\bt^q\al^p)$),
which is a contradiction.

We have proved that there is no path from $\al$ to $\bt$ of length $3$. But then there is no path from $\al$ to $\bt$
of length $2$ either since any such path would have the form $\al-\al^p-\bt$ (and then $\al-\al^p-\bt^2-\bt$ would
be a path of length $3$) or $\al-\bt^q-\bt$ (and then $\al-\al^2-\bt^q-\bt$ would be a path of length $3$).
It follows that $d(\al,\bt)=4$.
\end{proof}

\begin{lemma}\label{ldia6}
Let $n\geq3$ and $\nca<r<n-1$. Then there are $\al,\bt\in J_r$ such that for every nonzero $\gam\in\mi(X)$,
if $\al-\gam-\bt$, then $\gam=1$.
\end{lemma}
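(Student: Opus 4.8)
The plan is to exhibit two elements $\al,\bt\in J_r$ whose only nonzero common commuting element in all of $\mi(X)$ is the identity $1$; the stated conclusion then follows immediately, since any $\gam$ with $\al-\gam-\bt$ commutes with both $\al$ and $\bt$ and is nonzero (vertices of $\cg(\mi(X))$ are never $0$), hence must be $1$. I would use the hypothesis $\nca<r<n-1$ in the form $2r\ge n$ together with $r\le n-2$. The inequality $2r\ge n$ is exactly what allows the spans of two rank-$r$ chains to cover $X$, and covering $X$ is essential: if some point $z$ lay outside $\spa(\al)\cup\spa(\bt)$, then the idempotent $(z)$ would commute with both $\al$ and $\bt$ and be neither $0$ nor $1$, wrecking the statement.

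\textbf{The construction.} Identifying $X=\{1,2,\dots,n\}$, I would take $\al=[1\,2\,\cdots\,r\,(r+1)]$, a chain of rank $r$ with $\spa(\al)=\{1,\dots,r+1\}$, and let $\bt$ be \emph{any} chain of rank $r$ whose first point is $r+1$, whose last point is $1$, and whose span contains $\{1,r+1\}\cup\{r+2,\dots,n\}$. Such a $\bt$ exists because $\spa(\bt)$ has $r+1$ points while $\{1,r+1\}\cup\{r+2,\dots,n\}$ has only $n-r+1\le r+1$ points (this is where $2r\ge n$ enters), the remaining $2r-n$ span points being chosen arbitrarily from $\{2,\dots,r\}$. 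By design $\spa(\al)\cup\spa(\bt)=X$, and the two shared endpoints are placed so that $1$ is simultaneously the first point of $\al$ and the last point of $\bt$, while $r+1$ is the last point of $\al$ and the first point of $\bt$.

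\textbf{The rigidity argument.} Let $\gam\ne0$ commute with both $\al$ and $\bt$ and apply Proposition~\ref{pcen}. Since $\al$ and $\bt$ consist of a single chain and no cycles, only condition~(2) is relevant: it says that $\dom(\gam)$ meets the span of a chain in an \emph{initial} segment, which $\gam$ carries to a \emph{terminal} segment of that (unique) chain. If $\dom(\gam)\cap\spa(\al)\ne\emptyset$, this initial segment contains the first point $1$ of $\al$, so $1\in\dom(\gam)$; but $1$ is the \emph{last} point of $\bt$, so the initial segment $\dom(\gam)\cap\spa(\bt)$ must reach the end of $\bt$, forcing it to be all of $\spa(\bt)$ with zero shift, i.e.\ $\gam$ fixes $\spa(\bt)$ pointwise. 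In particular $\gam$ fixes $r+1$, the last point of $\al$, and re-applying condition~(2) to $\al$ then forces $\dom(\gam)\cap\spa(\al)=\spa(\al)$ with zero shift, so $\gam$ fixes $\spa(\al)$ pointwise as well. Hence $\gam$ is the identity on $\spa(\al)\cup\spa(\bt)=X$, that is, $\gam=1$. The only alternative is $\dom(\gam)\cap\spa(\al)=\emptyset$; then $1,r+1\notin\dom(\gam)$, and since any nonempty initial segment of $\bt$ contains its first point $r+1$, $\gam$ cannot meet $\spa(\bt)$ either, so $\dom(\gam)=\emptyset$, contradicting $\gam\ne0$.

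\textbf{The main obstacle.} The delicate point — and the reason the naive choice of two $r$-cycles fails — is the tension between the rank bound and the need to cover $X$. Two cycles of rank $r$ have spans of size $r$, so in the case $2r=n$ (namely $n$ even and $r=n/2$, which the hypothesis permits) they can cover $X$ only when disjoint, and disjoint elements commute, producing unwanted common neighbours. Passing to chains (span $r+1$) is precisely what buys the extra room to cover $X$ with a two-point overlap in \emph{every} admissible case, and seating that overlap at the two endpoints is what lets condition~(2) propagate rigidly from one chain to the other. I expect the only real work to be the careful bookkeeping of initial-versus-terminal segments in Proposition~\ref{pcen}; once the endpoint incidences are set up correctly, the rest is forced.
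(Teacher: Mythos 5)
Your proposal is correct and takes essentially the same route as the paper: the same construction of two rank-$r$ chains whose spans cover $X$ (via $2r\ge n$) and whose first and last points are swapped, followed by the same rigidity argument using condition (2) of Proposition~\ref{pcen} to propagate ``fixes pointwise'' from one chain to the other through the shared endpoints. No gaps.
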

\begin{proof}
Consider a nilpotent $\al=[x\,z_1\ldots\,z_{r-1}\,y]$ of rank $r$ (possible since $r<n-1$). 
Since $r>\nca$, we have $r>\frac{n-1}2$, and so $2r\geq n$.
Therefore, there are pairwise distinct elements $w_1,\ldots,w_{r-1}$ of $X$ such that
$\{x,y,x_1,\ldots,x_{r-1},w_1,\ldots,w_{r-1}\}=X$. Let $\bt=[y\,w_1\ldots\,w_{r-1}\,x]\in J_r$,
and suppose $0\ne\gam\in\mi(X)$ is such that $\al-\gam-\bt$. We want to prove that $\gam=1$.

Since $\gam\ne0$ and $\spa(\al)\cup\spa(\bt)=X$, we have $\dom(\gam)\cap\spa(\al)\ne\emptyset$
or $\dom(\gam)\cap\spa(\bt)\ne\emptyset$. We may assume that $\dom(\gam)\cap\spa(\al)\ne\emptyset$.
Then, since $\al\gam=\gam\al$, $x\in\dom(\gam)$ by Proposition~\ref{pcen}. Since $\bt\gam=\gam\bt$,
$x\in\dom(\gam)$ and Proposition~\ref{pcen} imply that $\spa(\bt)\subseteq\dom(\gam)$ and $\gam$ maps
$\bt$ onto a terminal segment of some chain in $\bt$. But $\bt$ is a single chain, so $\gam$
must map $\bt$ onto $\bt$, which is only possible if $\gam$ fixes every element of $\spa(\bt)$.
We now know that $\dom(\gam)\cap\spa(\bt)\ne\emptyset$. By the foregoing argument, with the roles of $\al$ and $\bt$ reversed,
we conclude that $\gam$ must also fix every element of $\spa(\al)$. Hence $\gam=1$.
\end{proof}

We can now determine the diameter of $\cg(J_r)$ for every $r<n$.

\begin{theorem}\label{tpro}
Let $n=|X|\geq3$ and let $J_r$ be a proper nonzero ideal of $\mi(X)$. Then:
\begin{itemize}
  \item [\rm(1)] The diameter of $\cg(J_{n-1})$ is $4$.
  \item [\rm(2)] If $\nca<r<n-1$, then the diameter of $\cg(J_r)$ is $3$.
  \item [\rm(3)] If $1\leq r\leq\nca$, then the diameter of $\cg(J_r)$ is $2$.
\end{itemize}
\end{theorem}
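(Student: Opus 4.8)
The plan is to handle the three cases separately, in each one matching an upper bound valid for all vertices with a pair that realises the claimed value. Throughout I would first record two preliminary facts. First, the only central element of $J_r$ is $0$ (an easy consequence of Proposition~\ref{pcen}: any nonzero $\al\in J_r$ is non-identity and fails to commute with a suitable rank-one chain, which lies in $J_r$ since $r\ge1$), so the vertex set of $\cg(J_r)$ is exactly $J_r\setminus\{0\}$. Second, since $1\notin J_r$ for $r<n$, this vertex set is contained in $\mi(X)\setminus\{0,1\}$ and adjacency agrees, so $\cg(J_r)$ is the induced subgraph of $\cg(\mi(X))$ on $J_r\setminus\{0\}$; deleting vertices never decreases distances, so any lower bound obtained in $\cg(\mi(X))$ transfers to $\cg(J_r)$. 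The structural fact organising the whole split is that $J_r$ contains an $n$-cycle or a nilpotent of index $n$ if and only if $r=n-1$.

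For (1), the upper bound is immediate from Lemma~\ref{ldia2}: every $\al\in J_{n-1}$ has $\rank(\al)\le n-1<n$, so is not an $n$-cycle, and hence any two vertices are joined by a path of length at most $4$ whose interior vertices have rank at most $\max\{\rank(\al),\rank(\bt)\}\le n-1$ and therefore lie in $J_{n-1}$. For the matching lower bound I would invoke Lemma~\ref{ldia5}, which gives two nilpotents $\al,\bt$ of index $n$ (rank $n-1$, hence vertices of $\cg(J_{n-1})$) with $d(\al,\bt)=4$ in $\cg(\mi(X))$; by the induced-subgraph remark this distance is still at least $4$ in $\cg(J_{n-1})$, so the diameter is exactly $4$. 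The conceptual reason these nilpotents are far apart is Lemma~\ref{ldia4}: their only commuting partners are their own powers, which rules out the short idempotent bridges available in the smaller ideals. The boundary value $n=3$ must be verified by hand, since Lemma~\ref{ldia2} assumes $n\ge4$.

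For the upper bounds in (2) and (3) the point is that when $r<n-1$ no vertex of $J_r$ is an $n$-cycle or a nilpotent of index $n$, so Lemma~\ref{ldia1} applies to \emph{every} vertex: each $\al$ commutes with an idempotent $\vep_\al\in\mi(X)\setminus\{0,1\}$ of rank $\le\rank(\al)\le r$, hence $\vep_\al\in J_r\setminus\{0\}$. Since idempotents of $\mi(X)$ commute, $\al-\vep_\al-\vep_\bt-\bt$ is a path, giving diameter at most $3$ and settling the upper bound for (2). For the lower bound in (2) I would use Lemma~\ref{ldia6}, which is stated precisely for $\nca<r<n-1$: it produces $\al,\bt\in J_r$ whose only common neighbour in $\mi(X)$ is the identity $1$; as $1\notin J_r$ and $\al,\bt$ do not commute, they have no common neighbour in $\cg(J_r)$, so $d(\al,\bt)=3$, and the diameter is $3$.

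It remains to sharpen the bound to $2$ in case (3), and this is the step I expect to be the main obstacle. The lower bound is trivial: a non-commuting pair such as $[1\,2]$ and $[2\,3]$ lies in $J_r$ and forces the diameter to be at least $2$. The hard part is the upper bound, namely that every non-commuting pair $\al,\bt\in J_r$ already has a common neighbour once $r\le\nca$. Here the hypothesis enters through $|\spa(\al)|\le 2r\le n-1$, so each element leaves room outside its span; the natural candidate for a common neighbour is a small idempotent supported on points lying outside (or fixed by) both $\al$ and $\bt$, which by Proposition~\ref{pcen} commutes with each. The delicate point is to guarantee that the two spans interact so that such a point—or, when they overlap heavily, some other suitable low-rank common neighbour—always exists; this is exactly the range below the threshold $\nca$ at which the obstruction of Lemma~\ref{ldia6} disappears, and controlling it (together with any small-$n$ exceptions) is the crux of the whole theorem.
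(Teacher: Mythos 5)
Your treatment of parts (1) and (2) follows the paper's route exactly: Lemma~\ref{ldia2} together with Lemma~\ref{ldia5} for the diameter-$4$ ideal (with the rank bound of Lemma~\ref{ldia2} keeping the connecting path inside $J_{n-1}$), and Lemma~\ref{ldia1} together with Lemma~\ref{ldia6} for the diameter-$3$ range. Two things are left undone, and one of them is a genuine gap. The smaller one: you flag the case $n=3$ of part (1) as needing a hand check but do not perform it; the paper does this by an explicit case analysis, exhibiting a path of length at most $4$ from $[x\,y\,z]$ to each of the other five nilpotents of index $3$ (the remaining cases being covered by the proof of Lemma~\ref{ldia2}, whose hypothesis $n\ge4$ is only used when both endpoints are nilpotents of index $n$).

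The real gap is the upper bound in part (3), which you yourself identify as the crux and then leave open. Your proposed witness --- an idempotent supported on a point lying outside both spans --- need not exist: already for $n=5$ and $r=2=\nca$, the elements $\al=[1\,2]\jo[3\,4]$ and $\bt=[2\,3]\jo[4\,5]$ of $J_2$ satisfy $\spa(\al)\cup\spa(\bt)=X$, because $|\spa(\al)\cup\spa(\bt)|$ is bounded only by $4r$, not by $2r$. The paper's argument is short but uses a different witness. From $2r\le n-1<n$ one obtains a point $x\notin\ima(\al)\cup\ima(\bt)$ and, by the same count applied to domains, a point $y\notin\dom(\al)\cup\dom(\bt)$; these two points need not coincide. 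If $x=y$, the idempotent with domain $\{x\}$ commutes with both $\al$ and $\bt$. If $x\ne y$, the rank-one nilpotent $[x\,y]$ does the job: $\al[x\,y]=0$ because $x\notin\ima(\al)$, and $[x\,y]\al=0$ because $y\notin\dom(\al)$, and likewise for $\bt$. Either witness is a nonzero element of $J_r$ since $r\ge1$, giving a path of length $2$. Without this (or an equivalent) construction, part (3) is not proved.
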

\begin{proof}
We first note that for every $r\in\{1,\ldots,n-1\}$, the only central element of $J_r$ is $0$. 

To prove (1), observe that $J_{n-1}=\mi(X)-\sym(X)$. The diameter of $\cg(J_{n-1})$
is at least
$4$ by Lemma~\ref{ldia5}. If $n\geq4$, then it is at most $4$ by Lemma~\ref{ldia2}. 

Let $n=3$ and let $\al,\bt\in J_{n-1}-\{0\}$.
If $\al$ or $\bt$ is not a nilpotent of index $3$, then $d(\al,\bt)\leq4$ by the proof of Lemma~\ref{ldia2}
(where the assumption $n\geq4$ was only used in the case when both $\al$ and $\bt$ were nilpotents of index $n$).

Let $\al=[x\,y\,z]$ and $\bt$ be distinct nilpotents of index $3$. We want to show that $d(\al,\bt)\leq4$. Since $\al-[x\,z]$,
it suffices to show that $d([x\,z],\bt)\leq3$. If $\bt=[x\,z\,y]$, then $[x\,z]-[x\,y]-[x\,z\,y]$;
if $\bt=[y\,x\,z]$, then $[x\,z]-[y\,z]-[y\,x\,z]$; if $\bt=[y\,z\,x]$, then $[x\,z]-[y\,z]-[y\,x]-[y\,z\,x]$;
if $\bt=[z\,x\,y]$, then $[x\,z]-[x\,y]-[z\,y]-[z\,x\,y]$; finally, if $\bt=[z\,y\,x]$, then
$[x\,z]-\vep-[z\,x]-[z\,y\,x]$, where $\vep$ is the idempotent with $\dom(\vep)=\{y\}$. Thus $d(\al,\bt)\leq4$,
which concludes the proof of (1).

To prove (2), suppose $\nca<r<n-1$. Then the diameter of $\cg(J_r)$ is at least $3$ by Lemma~\ref{ldia6}.
Let $\al,\bt\in J_r$. Since $r<n-1$, neither $\al$ nor $\bt$ is an $n$-cycle or a nilpotent of index $n$.
Thus, by Lemma~\ref{ldia1}, there are idempotents $\vep_1,\vep_2\in J_r-\{0\}$
such that $\al\vep_1=\vep_1\al$ and $\bt\vep_2=\vep_2\bt$. Since the idempotents in $\mi(X)$ commute,
we have $\al-\vep_1-\vep_2-\bt$, so the diameter of $\cg(J_r)$ is at most $3$.

To prove (3), suppose $1\leq r\leq\nca$. Then the diameter of $\cg(J_r)$ is at least $2$ since 
for any distinct $x,y\in X$, the nilpotents $[x\,y]$ and $[y\,x]$ (which are in $J_r$
since $r\geq1$) do not commute. Let $\al,\bt\in J_r-\{0\}$. We have $r\leq\nca\leq\frac{n-1}2$, and so
$2r\leq n-1<n$. Therefore,
\[
|\ima(\al)\cup\ima(\bt)|\leq|\ima(\al)|+|\ima(\bt)|\leq r+r=2r<n,
\]
and so there is $x\in X$ such that $x\notin\ima(\al)\cup\ima(\bt)$. By the same argument,
there is $y\in X$ such that $y\notin\dom(\al)\cup\dom(\bt)$.
If $x=y$, then $\al-\vep-\bt$, where $\vep$ is the idempotent with $\dom(\vep)=\{x\}$.
If $x\ne y$, then $\al-[x\,y]-\bt$. Thus, the diameter of $\cg(J_r)$ is at most $2$.
\end{proof}

We now want to prove that if $n\geq4$ is even, then the diameter of $\cg(\mi(X))$ is $4$.

\begin{defi}\label{dali}
{\rm 
Let $\gam,\del\in\mi(X)$. We say that $\gam$ and $\del$ are \emph{aligned} if there exists an integer $r\geq2$
and pairwise distinct elements $a_1,\ldots,a_r,c_1,\ldots,c_{r-1},b_1$ of $X$ such that
\begin{align}
\gam&=(a_1\,b_1)\jo(a_2\,c_1)\jo(a_3\,c_2)\jo\cdots\jo(a_{r-1}\,c_{r-2})\jo(a_r\,c_{r-1}),\notag\\
\del&=(a_1\,c_1)\jo(a_2\,c_2)\jo(a_3\,c_3)\jo\cdots\jo(a_{r-1}\,c_{r-1})\jo(a_r\,b_1).\notag
\end{align}
}
\end{defi}

The following lemma follows immediately from Definition~\ref{dali}

\begin{lemma}\label{lali}
Let $\gam,\del\in\mi(X)$ be aligned. Then, with the notation from {\rm Definition~\ref{dali}},
\[
\gam-(a_1\,a_2\ldots\,a_r)\jo(b_1\,c_1\ldots\,c_{r-1})-\del.
\]
\end{lemma}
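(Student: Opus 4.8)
The plan is to set $\sig=(a_1\,a_2\ldots\,a_r)\jo(b_1\,c_1\ldots\,c_{r-1})$ and verify directly that $\sig$ commutes with both $\gam$ and $\del$, and that $\sig$ is distinct from each of them, so that $\gam-\sig$ and $\sig-\del$ are genuine edges of $\cg(\mi(X))$. The first observation is that all three transformations have the same span, namely $Y=\{a_1,\ldots,a_r,b_1,c_1,\ldots,c_{r-1}\}$, and that each restricts to a bijection of $Y$ and is undefined off $Y$; hence $\gam,\del,\sig$ may be regarded as elements of $\sym(Y)$, and commutativity in $\mi(X)$ coincides with commutativity in the group $\sym(Y)$. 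This reduces the whole lemma to a computation with honest permutations, where I can use inverses freely.

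For the commuting relations I would use conjugation. Since $\sig$ is invertible on $Y$, the relation $\gam\sig=\sig\gam$ is equivalent to $\sig\inv\gam\sig=\gam$. Under the right-action convention, conjugation by $\sig$ replaces each cycle $(x\,y)$ of $\gam$ by $(x\sig\,y\sig)$, so I would simply track where $\sig$ sends the two points of each transposition. Using $a_i\sig=a_{i+1}$ (indices cyclic, so $a_r\sig=a_1$), together with $b_1\sig=c_1$, $c_i\sig=c_{i+1}$, and $c_{r-1}\sig=b_1$, the transposition $(a_1\,b_1)$ goes to $(a_2\,c_1)$, each $(a_i\,c_{i-1})$ goes to $(a_{i+1}\,c_i)$, and the last one $(a_r\,c_{r-1})$ wraps back to $(a_1\,b_1)$. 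Thus $\sig$ merely permutes cyclically the $r$ transpositions that constitute $\gam$, so their join is unchanged and $\sig\inv\gam\sig=\gam$. The identical bookkeeping applied to $\del$ sends $(a_i\,c_i)\mapsto(a_{i+1}\,c_{i+1})$, $(a_{r-1}\,c_{r-1})\mapsto(a_r\,b_1)$, and $(a_r\,b_1)\mapsto(a_1\,c_1)$, again a cyclic permutation of its constituent transpositions, giving $\sig\inv\del\sig=\del$. As an alternative to the conjugation language, one can read off exactly these facts from condition~(1) of Proposition~\ref{pcen}, since $\sig$ carries each $2$-cycle of $\gam$, respectively $\del$, onto a $2$-cycle of the same length.

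Finally, distinctness is immediate: $a_1\sig=a_2$, whereas $a_1\gam=b_1$ and $a_1\del=c_1$, and $a_2,b_1,c_1$ are pairwise distinct by hypothesis, so $\sig\ne\gam$ and $\sig\ne\del$; likewise $a_1\gam=b_1\ne c_1=a_1\del$ shows $\gam\ne\del$, so the three vertices are pairwise distinct and $\gam-\sig-\del$ is a genuine path. The only point demanding care — and the one I would check most slowly — is the wrap-around at the ends of the two cycles of $\sig$: one must use precisely $a_r\sig=a_1$ and $c_{r-1}\sig=b_1$ so that the boundary transpositions $(a_r\,c_{r-1})$ and $(a_r\,b_1)$ land back on $(a_1\,b_1)$ and $(a_1\,c_1)$ respectively. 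Everything else is routine, consistent with the lemma being essentially a direct consequence of Definition~\ref{dali}.
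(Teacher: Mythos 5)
Your verification is correct and is precisely the direct check the paper omits: it states that the lemma ``follows immediately from Definition~\ref{dali}'', and your conjugation bookkeeping (showing $\sig$ cyclically permutes the transpositions of $\gam$ and of $\del$, plus the distinctness of the three vertices) is exactly that immediate verification made explicit. No issues.
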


\begin{lemma}\label{lncy}
Let $n=2k=|X|\geq4$ be even. Suppose $\al,\bt\in\sym(X)$ are joins of $k$ cycles of length~$2$ with no cycle in common.
Then $\al=\gam\jo\al'$ and $\bt=\del\jo\bt'$, where $\gam$ and $\del$ are aligned.
\end{lemma}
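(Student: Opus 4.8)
The plan is to realize the aligned pair as a single alternating cycle in the overlay of the two matchings $\al$ and $\bt$. Since each of $\al,\bt$ is a join of $k$ cycles of length $2$ with $\dom=X$, every $x\in X$ lies in exactly one transposition of $\al$ and exactly one of $\bt$; I read these transpositions as edges of a graph $\Gamma$ on vertex set $X$, labelled according to whether they come from $\al$ or from $\bt$. Each vertex then meets exactly one $\al$-edge and one $\bt$-edge, so $\Gamma$ is $2$-regular and its connected components are cycles whose edges alternate in label, hence of even length. The hypothesis that $\al$ and $\bt$ have no cycle in common is exactly what forbids a component of length $2$ (a repeated edge), so every component has length at least $4$.

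Next I would fix any $b_1\in X$ and trace the component of $\Gamma$ through $b_1$, leaving along its $\al$-edge: set $a_1=b_1\al$, then alternate, $c_i=a_i\bt$ and $a_{i+1}=c_i\al$, stopping at the first $a_r$ with $a_r\bt=b_1$ (equivalently $a_r=b_1\bt$). Because a connected $2$-regular graph is a simple cycle, the vertices $b_1,a_1,c_1,a_2,c_2,\ldots,a_r,c_{r-1}$ met before the walk closes are pairwise distinct, and the closing step is forced to be the $\bt$-edge back into $b_1$ (leaving on the $\al$-edge, one must return on the other edge at $b_1$). Here the length-at-least-$4$ observation gives $r\ge2$: if $r=1$ then $b_1\al=a_1=b_1\bt$, so $(a_1\,b_1)$ would be a common transposition of $\al$ and $\bt$, contradicting the hypothesis.

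I would then set
\[
\gam=(a_1\,b_1)\jo(a_2\,c_1)\jo\cdots\jo(a_r\,c_{r-1}),\qquad
\del=(a_1\,c_1)\jo\cdots\jo(a_{r-1}\,c_{r-1})\jo(a_r\,b_1).
\]
By the defining relations of the walk ($a_1=b_1\al$, $a_i=c_{i-1}\al$, $c_i=a_i\bt$, $a_r\bt=b_1$) together with the fact that $\al$ and $\bt$ are involutions, each listed transposition of $\gam$ is a cycle of $\al$ and each listed transposition of $\del$ is a cycle of $\bt$; and since the $a_i,c_i,b_1$ are pairwise distinct with $r\ge2$, the pair $\gam,\del$ matches Definition~\ref{dali} verbatim, so $\gam$ and $\del$ are aligned. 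Finally, $\spa(\gam)=\spa(\del)=\{b_1,a_1,\ldots,a_r,c_1,\ldots,c_{r-1}\}$ is a union of whole transpositions of $\al$ (namely $\{a_1,b_1\},\{a_2,c_1\},\ldots,\{a_r,c_{r-1}\}$) and, separately, a union of whole transpositions of $\bt$; hence, taking $\al'$ to be the join of the remaining transpositions of $\al$ and $\bt'$ the join of the remaining transpositions of $\bt$, the pieces are completely disjoint and $\al=\gam\jo\al'$, $\bt=\del\jo\bt'$, as required.

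I expect the only genuine subtlety to be the bookkeeping that the alternating walk closes up correctly and visits pairwise distinct points; this is precisely the standard fact that a connected $2$-regular graph is a simple cycle, so invoking the overlay graph $\Gamma$ lets me take it for granted rather than re-proving distinctness by hand. The rest is purely a matter of aligning indices with Definition~\ref{dali} and checking that the two spans split off as complete unions of transpositions.
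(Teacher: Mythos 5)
Your proof is correct and follows essentially the same route as the paper: both trace the alternating sequence of $\al$-transpositions and $\bt$-transpositions starting from $(a_1\,b_1)$ until it closes up at $b_1$, and read off $\gam$ and $\del$ from the two halves of that cycle. The only difference is presentational --- you package the trace as a component of the overlay graph of two perfect matchings, which makes the distinctness and termination bookkeeping (left implicit in the paper's ``this process must terminate'' step) follow from the standard fact that a connected $2$-regular simple graph is a cycle.
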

\begin{proof}
Select any cycle $(a_1\,b_1)$ in $\al$. Then $\bt$ has a cycle $(a_1\,c_1)$ with $c_1\ne b_1$
(since $\al$ and $\bt$ have no cycle in common). Continuing, $\al$ must have a cycle $(a_2\,c_1)$,
and so $\bt$ must have either a cycle $(a_2\,b_1)$ or a cycle $(a_2\,c_2)$ with $c_2\ne b_1$. 
In the latter case, $\al$ must have a cycle $(a_3\,c_2)$, and so $\bt$ must have a cycle $(a_3\,b_1)$ or
a cycle $(a_3\,c_3)$ with $c_3\ne b_1$. This process must terminate after at most $k$, say $r$, steps. That is,
at step $r$, we will obtain a cycle $(a_{r}\,c_{r-1})$ in $\al$ and a cycle $(a_r\,b_1)$ in $\bt$. Hence
\begin{align}
\al&=(a_1\,b_1)\jo(a_2\,c_1)\jo(a_3\,c_2)\jo\cdots\jo(a_{r-1}\,c_{r-2})\jo(a_r\,c_{r-1})\jo\al',\notag\\
\bt&=(a_1\,c_1)\jo(a_2\,c_2)\jo(a_3\,c_3)\jo\cdots\jo(a_{r-1}\,c_{r-1})\jo(a_r\,b_1)\jo\bt',\notag
\end{align}
where $\al'=\bt'=0$ if $r=k$. The proof is completed by the observation that
$\gam=(a_1\,b_1)\jo(a_2\,c_1)\jo(a_3\,c_2)\jo\cdots\jo(a_{r-1}\,c_{r-2})\jo(a_r\,c_{r-1})$
and $\del=(a_1\,c_1)\jo(a_2\,c_2)\jo(a_3\,c_3)\jo\cdots\jo(a_{r-1}\,c_{r-1})\jo(a_r\,b_1)$
are aligned.
\end{proof}

\begin{lemma}\label{ldia3}
Let $n\geq6$ be composite. Suppose $\al,\bt\in\mi(X)-\{0,1\}$ such that $\al$ is an $n$-cycle and $\bt$ is not an $n$-cycle.
Then $d(\al,\bt)\leq4$.
\end{lemma}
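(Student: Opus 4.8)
The plan is to use the compositeness of $n$ to step from the $n$-cycle $\al$ to a nearby \emph{non}-$n$-cycle, and then reach $\bt$ by the idempotent route already used in Lemma~\ref{ldia2}, controlling the total length by the triangle inequality for the graph distance $d$. Since $n\ge6$ is composite it has a divisor $s$ with $3\le s<n$ (take $s=n/2$ when $n$ is even, and $s$ equal to the least prime factor of $n$ when $n$ is odd). As noted after Proposition~\ref{pdec}, if $\al=(x_0\,x_1\ldots\,x_{n-1})$ then $\al^s$ is a join of exactly $s$ cycles, each of length $n/s\ge2$; hence $\al^s$ is a permutation that is not an $n$-cycle, $\al^s\notin\{0,1\}$, and $\al^s\ne\al$. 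Being a power of $\al$, it satisfies $\al-\al^s$, so $d(\al,\al^s)=1$, and it suffices to bound $d(\al^s,\bt)$ (or the distance from $\al$ to a suitable rank-one neighbour of $\bt$) by $3$.

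First I would treat the case in which $\bt$ is \emph{not} a nilpotent of index $n$. Then neither $\al^s$ nor $\bt$ is an $n$-cycle, and, being a permutation, $\al^s$ is not a nilpotent of index $n$ either. By Lemma~\ref{ldia1}, $\al^s$ commutes with a nontrivial idempotent $\vep_1$ and $\bt$ commutes with a nontrivial idempotent $\vep_2$; since idempotents in $\mi(X)$ commute, $\al^s-\vep_1-\vep_2-\bt$ gives $d(\al^s,\bt)\le3$, exactly as in the first case of Lemma~\ref{ldia2}. The triangle inequality then yields $d(\al,\bt)\le d(\al,\al^s)+d(\al^s,\bt)\le 1+3=4$.

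The remaining case, $\bt=[x_1\,x_2\ldots\,x_n]$ a nilpotent of index $n$, is the main obstacle: such a $\bt$ commutes with no idempotent other than $0$ and $1$ (an idempotent meeting the single full-span chain must, by Proposition~\ref{pcen}, fix the entire span and hence equal $1$), so the idempotent route \emph{to} $\bt$ is unavailable. I would instead first step from $\bt$ to its power $\bt^{\,n-1}=[x_1\,x_n]$, a rank-one nilpotent with $\bt-[x_1\,x_n]$, so $d(\bt,[x_1\,x_n])=1$, and then bound $d(\al,[x_1\,x_n])\le3$. Here is precisely where $s\ge3$ is needed: the two points $x_1,x_n$ lie in the spans of at most two of the $s\ge3$ cycles of $\al^s$, so some cycle $\rho$ of $\al^s$ has $\spa(\rho)\cap\{x_1,x_n\}=\emptyset$. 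Let $\mu$ be the idempotent with $\dom(\mu)=\spa(\rho)$. Then $\mu\notin\{0,1\}$, and from Proposition~\ref{pcen} one checks that $\mu$ commutes with $\al^s$ (its domain is a union of complete cycle-spans of $\al^s$) and with $[x_1\,x_n]$ (its domain is disjoint from $\{x_1,x_n\}=\spa([x_1\,x_n])$). This produces the path $\al-\al^s-\mu-[x_1\,x_n]$, whence $d(\al,[x_1\,x_n])\le3$ and $d(\al,\bt)\le d(\al,[x_1\,x_n])+d([x_1\,x_n],\bt)\le 3+1=4$.

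The only genuinely delicate points are securing a divisor $s\ge3$ (this fails precisely at $n=4$, which is why the hypothesis is $n\ge6$ composite rather than merely $n$ composite) and recognising, via Lemma~\ref{ldia4} and Proposition~\ref{pcen}, that a nilpotent of index $n$ has no nontrivial idempotent neighbour, forcing the detour through $[x_1\,x_n]$. All of the adjacency verifications are routine applications of Proposition~\ref{pcen}, and both length bounds follow from the triangle inequality, so no computation beyond these checks is required.
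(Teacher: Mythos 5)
Your proposal is correct and follows essentially the same route as the paper: for $\bt$ not a nilpotent of index $n$ the paper also passes through $\al^k$ and two commuting idempotents via Lemma~\ref{ldia1}, and for $\bt$ a nilpotent of index $n$ it likewise uses the path $\al-\al^k-\vep-[x_1\,x_n]-\bt$, with $k$ the largest proper divisor of $n$ chosen so that $\al^k$ has at least three cycles, one of which avoids $\{x_1,x_n\}$. The only cosmetic difference is your choice of divisor ($n/2$ or the least prime factor instead of the largest proper divisor), which serves the identical purpose.
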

\begin{proof}

Suppose $\bt$ is not a nilpotent of index $n$. Since $n$ is composite, there is a divisor
$k$ of $n$ with $1<k<n$. Then $\al^k\in\mi(X)-\{0,1\}$ is not an $n$-cycle. Thus, by Lemma~\ref{ldia1}, there are
idempotents $\vep_1,\vep_2\in\mi(X)-\{0,1\}$ such that $\al^k-\vep_1$ and $\vep_2-\bt$.
Then $\al-\al^k-\vep_1-\vep_2-\bt$, and so $d(\al,\bt)\leq4$.

Suppose $\bt=[x_1\,x_2\ldots\,x_n]$ is a nilpotent of index $n$. Let $k$ be the largest proper
divisor of $n$. Then $\al=\rho_1\jo\cdots\jo\rho_k$, where each $\rho_i$
is a cycle of length $\frac{n}{k}$. Since $n\geq6$, we have $k>2$. Thus, there exists $t\in\{1,\ldots,k\}$
such that $x_1,x_n\notin\spa(\rho_t)$. Let $\vep$ be the idempotent with $\dom(\vep)=\spa(\rho_t)$.
Then $\vep\ne0,1$ and, by Proposition~\ref{pcen}, $\al-\al^k-\vep-[x_1\,x_n]-\bt$. Hence $d(\al,\bt)\leq4$.
\end{proof}

\begin{theorem}\label{tdie}
Let $n=|X|\geq4$ be even. Then the diameter of $\cg(\mi(X))$ is $4$.
\end{theorem}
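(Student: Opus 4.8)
The plan is to prove the two inequalities $\diam\cg(\mi(X))\ge 4$ and $\diam\cg(\mi(X))\le 4$ separately. The lower bound is immediate from Lemma~\ref{ldia5}: there are nilpotents $\al,\bt$ of index $n$ with $d(\al,\bt)=4$, and since the only central elements of $\mi(X)$ are $0$ and $1$, these are genuine vertices of $\cg(\mi(X))$. It then remains to show $d(\al,\bt)\le 4$ for all non-central $\al,\bt$ (assuming $\al\ne\bt$). I would split on how many of $\al,\bt$ are $n$-cycles.

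If neither $\al$ nor $\bt$ is an $n$-cycle, then $d(\al,\bt)\le 4$ is exactly Lemma~\ref{ldia2} (which already covers the case where one or both is a nilpotent of index $n$). If exactly one of them, say $\al$, is an $n$-cycle and $\bt$ is not, then for $n\ge 6$ the bound is Lemma~\ref{ldia3}, since an even $n\ge 6$ is composite. The first argument of Lemma~\ref{ldia3} in fact works verbatim for $n=4$ provided $\bt$ is not a nilpotent of index $n$: take $k=2$, note that $\al^2\in\mi(X)-\{0,1\}$ is not an $n$-cycle, and use idempotent neighbours from Lemma~\ref{ldia1} to obtain $\al-\al^2-\vep_1-\vep_2-\bt$. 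Thus the only residual case in this branch is $n=4$ with $\al$ a $4$-cycle and $\bt$ a nilpotent of index $4$, treated below.

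When both $\al$ and $\bt$ are $n$-cycles, I pass to the fixed-point-free involutions $\mu=\al^{n/2}$ and $\nu=\bt^{n/2}$, each a join of $n/2$ cycles of length $2$, which commute with $\al$ and $\bt$ respectively. If $\mu=\nu$, then $\al-\mu-\bt$ has length $2$. If $\mu,\nu$ share a cycle $(a\,b)$, the idempotent $\vep$ with $\dom(\vep)=\{a,b\}$ commutes with both, giving the length-$4$ path $\al-\mu-\vep-\nu-\bt$. Otherwise $\mu$ and $\nu$ have no cycle in common, and I decompose $X$ into the alternating cycles of $\mu\cup\nu$; on each such block, which is a single alternating cycle, Lemma~\ref{lncy} presents the relevant pieces of $\mu$ and $\nu$ as aligned $\gam,\del$, and Lemma~\ref{lali} produces a permutation $\sigma_i$ commuting with both restrictions. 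Joining the $\sigma_i$ (using that commutativity is local to completely disjoint parts, Lemma~\ref{lres2}) yields a single $\sigma\in\sym(X)-\{1\}$, not an $n$-cycle and hence distinct from $\mu,\nu,\al,\bt$, with $\mu-\sigma-\nu$; so $\al-\mu-\sigma-\nu-\bt$ has length $4$.

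The main obstacle is the corner case $n=4$, $\al=(x_0\,x_1\,x_2\,x_3)$ a $4$-cycle and $\bt=[a\,b\,c\,d]$ a nilpotent of index $4$, where the Lemma~\ref{ldia3} reduction fails because $n$ has no proper divisor exceeding $2$. By Lemma~\ref{ldia4} the only neighbours of $\al$ are $\al^2,\al^3$ and the only neighbours of $\bt$ are $\bt^2=[a\,c]\jo[b\,d]$ and $\bt^3=[a\,d]$, so any short path must pass through $\al^2$ (the only non-$n$-cycle power of $\al$) and a power of $\bt$. Writing $\bar a=a\al^2$ for the $\al^2$-antipode of $a$, we have $\bar a\in\{b,c,d\}$, and I would split on these three possibilities: if $\bar a=b$, then $\{a,b\}$ is a cycle of $\al^2$ and a short application of Proposition~\ref{pcen} shows $\al^2$ commutes with $\bt^2$, giving the length-$3$ path $\al-\al^2-\bt^2-\bt$; if $\bar a=c$, the idempotent on $\{a,c\}=\spa([a\,c])$ commutes with both $\al^2$ and $\bt^2$, giving $\al-\al^2-\vep-\bt^2-\bt$; and if $\bar a=d$, the idempotent on $\{a,d\}=\spa([a\,d])$ commutes with $\al^2$ and $\bt^3$, giving $\al-\al^2-\vep-\bt^3-\bt$. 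Since $a$ is non-adjacent in the $4$-cycle to exactly one of $b,c,d$, one of the three cases always applies. I expect the bookkeeping in this corner case, together with checking that the constructed middle vertices are always distinct non-central vertices, to be the most delicate part of the write-up.
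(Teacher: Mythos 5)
Your proposal is correct and follows essentially the same route as the paper: lower bound from Lemma~\ref{ldia5}, then Lemmas~\ref{ldia2} and~\ref{ldia3} for the non-$n$-cycle cases, passage to the half-powers $\al^{n/2},\bt^{n/2}$ together with Lemmas~\ref{lncy} and~\ref{lali} when both are $n$-cycles, and an ad hoc check for the $n=4$, $4$-cycle versus index-$4$ nilpotent corner. The only (harmless) deviations are that you case on $a\al^2$ in that corner where the paper instead connects $\al^2$ to $\bt^3=[a\,d]$ directly, and that you align $\mu$ and $\nu$ on every alternating block and join the results, whereas the paper aligns a single block and uses the resulting $\eta$ (supported only there) as the middle vertex.
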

\begin{proof}
Let $\al,\bt\in\mi(X)-\{0,1\}$. We will prove that $d(\al,\bt)\leq4$.
If neither $\al$ nor $\bt$ is an $n$-cycle, then $d(\al,\bt)\leq4$ by Lemma~\ref{ldia2}. 

Suppose $\al$ is an $n$-cycle and $\bt$ is not an $n$-cycle. If $n\geq6$, then $d(\al,\bt)\leq4$ by 
Lemma~\ref{ldia3}. If $n=4$ and $\bt$ is not a nilpotent of index $4$, then $d(\al,\bt)\leq4$
again by Lemma~\ref{ldia3} (where the assumption $n\geq6$ was only used when $\bt$ was a nilpotent of index $n$).
Let $n=4$, $\al=(x\,y\,z\,w)$, and $\bt=[a\,b\,c\,d]$. Then $\al^2=(x\,z)\jo(y\,w)$, $\bt^3=[a\,d]$, and so it suffices to find
a path of length $2$ from $(x\,z)\jo(y\,w)$ to $[a\,d]$. If $\{a,d\}=\{x,z\}$ or $\{a,d\}=\{y,w\}$, then
$(x\,z)\jo(y\,w)-\vep-[a\,d]$, where $\vep$ is the idempotent with $\dom(\vep)=\{a,d\}$.
Otherwise, we may assume that $a=x$ and $d=w$, and then $(x\,z)\jo(y\,w)-[x\,y]\jo[z\,w]-[x\,w]=[a\,d]$.
Hence $d(\al,\bt)\leq4$.

Suppose $\al$ and $\bt$ are $n$-cycles. Then for $k=n/2$, $\al^k$ and $\bt^k$ are joins
of $k$ cycles of length $2$. Therefore, it suffices to find a path of length $2$ from $\al^k$ to $\bt^k$.
If $\al^k$ and $\bt^k$ have a cycle in common, say $(a\,b)$, then $\al^k-\vep-\bt^k$, where
$\vep$ is the idempotent with $\dom(\vep)=\{a,b\}$. 

Suppose $\al^k$ and $\bt^k$ have no common cycle.
Then $\al^k=\gam\jo\al'$ and $\bt^k=\del\jo\bt'$, where $\gam,\al',\del,\bt'$
are as in Lemma~\ref{lncy}. By Lemma~\ref{lali},
there is $\eta\in\mi(X)$ such that $\spa(\eta)=\spa(\gam)=\spa(\del)$ and $\gam-\eta-\del$.
It follows that
\[
\al^k=\gam\jo\al'-\eta-\del\jo\bt'=\bt^k.
\]

We have proved that $d(\al,\bt)\leq4$ for all $\al,\bt\in\mi(X)$, which shows that the diameter
of $\cg(\mi(X))$ is at most $4$. Since the diameter of $\cg(\mi(X))$ is at least
$4$ by Lemma~\ref{ldia5}, the proof is complete.
\end{proof}

Suppose $n=2$, say $X=\{x,y\}$. Then the commuting graph $\cg(\mi(X))$ has one edge, $(x)-(y)$
(recall that in our notation $(x)$ is the idempotent with domain $\{x\}$),
and three isolated vertices, $(x\,y)$, $[x\,y]$, and $[y\,x]$. Hence, the diameter of $\cg(\mi(X))$ is $\infty$.

The following proposition and Theorem~\ref{tpow} partially solve the problem of finding the diameter of $\cg(\mi(X))$
when $n$ is odd.

\begin{prop}\label{pdio}
Let $n=|X|\geq3$ be odd. Then:
\begin{itemize}
   \item [\rm(1)] If $n$ is prime, then $\cg(\mi(X))$ is $\infty$.
   \item [\rm(2)] If $n$ is composite, then the diameter of $\cg(\mi(X))$ is either $4$ or $5$.
\end{itemize}
\end{prop}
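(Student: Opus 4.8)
The plan is to treat the two parts separately, relying on the fact (noted just before Corollary~\ref{ccli}) that the only central elements of $\mi(X)$ are $0$ and $1$, so the vertices of $\cg(\mi(X))$ are exactly the elements of $\mi(X)-\{0,1\}$.

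For part~(1), suppose $n$ is an odd prime and fix an $n$-cycle $\al=(x_0\,x_1\ldots\,x_{n-1})$. First I would determine the connected component of $\al$. If $\gam\in\mi(X)-\{0,1\}$ commutes with $\al$, then $\gam=\al^q$ for some $q\in\{1,\ldots,n-1\}$ by Lemma~\ref{ldia4}(2); since $n$ is prime, $\gcd(q,n)=1$, so each such $\al^q$ is again an $n$-cycle generating the same cyclic group $\langle\al\rangle$. Hence every neighbour of $\al$ is a power of $\al$, and the same argument applied to each $\al^q$ (itself an $n$-cycle) shows that no edge leaves the set $\{\al^q:1\le q\le n-1\}$. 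Thus the component of $\al$ is precisely this clique of $n$-cycles. Since a rank-one idempotent $(x)$ is a vertex that is not an $n$-cycle, it lies in a different component, so $d(\al,(x))=\infty$ and the diameter of $\cg(\mi(X))$ is $\infty$.

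For part~(2), suppose $n$ is odd and composite, so $n\ge9$. The lower bound is immediate: Lemma~\ref{ldia5} produces nilpotents of index $n$ at distance $4$, so the diameter is at least $4$. For the upper bound I would show $d(\al,\bt)\le5$ for all vertices $\al,\bt$, splitting into cases according to how many of $\al,\bt$ are $n$-cycles. If neither is an $n$-cycle, Lemma~\ref{ldia2} gives $d(\al,\bt)\le4$; if exactly one of them is an $n$-cycle, then (after possibly swapping $\al$ and $\bt$, since distance is symmetric) Lemma~\ref{ldia3} applies, as $n\ge6$ is composite, and again $d(\al,\bt)\le4$.

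The only remaining, and main, obstacle is the case where both $\al$ and $\bt$ are $n$-cycles, which cannot be attacked directly because Lemma~\ref{ldia4}(2) forces every neighbour of an $n$-cycle to be one of its powers. To get around this I would pass to a proper power. Choose a divisor $k$ of $n$ with $1<k<n$ (which exists since $n$ is composite); then $\al^k$ is a join of $k$ cycles of length $n/k\ge2$, hence a non-identity permutation that is not an $n$-cycle and is not a nilpotent, and likewise for $\bt^k$. Applying Lemma~\ref{ldia1} to $\al^k$ and to $\bt^k$ yields idempotents $\vep_1,\vep_2\in\mi(X)-\{0,1\}$ with $\al^k\vep_1=\vep_1\al^k$ and $\bt^k\vep_2=\vep_2\bt^k$. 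Since idempotents of $\mi(X)$ commute, the sequence
\[
\al\,-\,\al^k\,-\,\vep_1\,-\,\vep_2\,-\,\bt^k\,-\,\bt
\]
consists of commuting consecutive pairs that are distinct (note $\al^k\ne\al$, and $\al^k$ is not an idempotent since it has order $n/k>1$, and similarly for $\bt^k$); if $\vep_1=\vep_2$ one deletes the repeated vertex to obtain a walk of length $4$. In either case there is a walk of length at most $5$ from $\al$ to $\bt$, so $d(\al,\bt)\le5$. Combining the three cases with the lower bound shows the diameter is $4$ or $5$. The extra step on each side, passing through $\al^k$ and $\bt^k$, is exactly what raises the bound from $4$ to $5$ and reflects why the precise value is left undetermined here (unlike the even case, where the halves $\al^{n/2},\bt^{n/2}$ are products of transpositions and can be joined in two steps via the alignment lemmas).
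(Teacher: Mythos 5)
Your proposal is correct and follows essentially the same route as the paper: part (1) via Lemma~\ref{ldia4} showing the component of an $n$-cycle consists only of its powers, and part (2) via Lemmas~\ref{ldia2}, \ref{ldia3} and \ref{ldia5} together with the length-$5$ walk $\al-\al^k-\vep_1-\vep_2-\bt^k-\bt$ for a proper divisor $k$ of $n$. The only cosmetic difference is that you obtain the idempotents $\vep_1,\vep_2$ from Lemma~\ref{ldia1}, whereas the paper takes them explicitly as the idempotents whose domains are the spans of single cycles of $\al^k$ and $\bt^k$.
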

\begin{proof}Suppose $n=p$ is an odd prime. Consider a $p$-cycle $\al=(x_0\,x_1\ldots x_{p-1})$ and let $\bt\in\mi(X)-\{0,1\}$ with
$\al\bt=\bt\al$. By Lemma~\ref{ldia4}, $\bt=\al^q$ for some $q\in\{1,\ldots,p-1\}$. Thus, since $p$ is prime, 
$\bt$ is also a $p$-cycle. It follows that
if $\gam$ is a vertex of $\cg(\mi(X))$ that is not a $p$-cycle, then there is no path in $\cg(\mi(X))$ from
$\al$ to $\gam$. Hence $\cg(\mi(X))$ is not connected, and so the diameter of $\cg(\mi(X))$ is~$\infty$. We have proved (1).

Suppose $n$ is odd and composite (so $n\geq9$). Let $\al,\bt\in\mi(X)-\{0,1\}$. If $\al$ or $\bt$ is not an
$n$-cycle, then $d(\al,\bt)\leq4$ by Lemmas~\ref{ldia2} and~\ref{ldia3}. Suppose $\al$ and $\bt$ are $n$-cycles.
Let $k$ be a proper divisor of $n$ ($1<k<n$). Then $\al=\rho_1\jo\cdots\jo\rho_k$
and $\bt=\sig_1\jo\cdots\jo\sig_k$, where each $\rho_i$ and each $\sig_i$
is a cycle of length $\frac{n}{k}$. Let $\vep_1$ and $\vep_2$ be the idempotents with $\dom(\vep_1)=\spa(\rho_1)$
and $\dom(\vep_2)=\spa(\sig_1)$. Then, $\al^k,\bt^k\ne1$ (since $k<n$), $\vep_1,\vep_2\ne1$ (since $k>1$),
and $\al-\al^k-\vep_1-\vep_2-\bt^k-\bt$. Hence $d(\al,\bt)\leq5$, and so the diameter of $\cg(\mi(X))$ is at most $5$.
On the other hand, the diameter of $\cg(\mi(X))$ is at least $4$ by Lemma~\ref{ldia5}. We have proved (2).
\end{proof}

We will now prove that when $n=p^k$ is a power of an odd prime $p$, with $k\geq2$, then the diameter of $\cg(\mi(X))$ is $5$.

\begin{defi}\label{dhag}
{\rm 
Let $\al=\rho_1\jo\rho_2\jo\cdots\jo\rho_k\in\sym(X)$ and let $\gam\in\mi(X)$ with $\al\gam=\gam\al$.
We define a partial transformation $\hag$ on the set $A=\{\rho_1,\rho_2,\ldots,\rho_k\}$ of cycles of $\al$ by:
\begin{align}
\dom(\hag)&=\{\rho_i\in A:\spa(\rho_i)\cap\dom(\gam)\ne\emptyset\},\notag\\
\rho_i\hag&=\mbox{the unique $\rho_j\in A$ such that $(\spa(\rho_i))\gam=\spa(\rho_j)$.}\notag
\end{align}
Note that $\hag$ is well defined and injective by Proposition~\ref{pcen}. 
}
\end{defi}

The case of $n=3^2=9$ is special and we consider it in the following lemma.

\begin{lemma}\label{lne9}
Let $n=|X|=9$. Then there are $9$-cycles $\al$ and $\bt$ in $\sym(X)$ such that
the distance between $\al$ and $\bt$ in $\cg(\mi(X))$ is $5$.
\end{lemma}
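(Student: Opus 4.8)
The plan is to prove both inequalities. The upper bound $d(\al,\bt)\le 5$ holds for \emph{every} pair of vertices by Proposition~\ref{pdio}(2), since $n=9$ is odd and composite; so the whole content is to exhibit two $9$-cycles $\al,\bt$ with $d(\al,\bt)\ge 5$, i.e.\ with no path of length $\le 4$. The starting point is Lemma~\ref{ldia4}(2): the neighbours of a $9$-cycle $\sig$ in $\cg(\mi(X))$ are exactly its nontrivial powers $\sig^2,\ldots,\sig^8$. Among the powers of $\al$, those $\al^q$ with $\gcd(q,9)=1$ are again $9$-cycles, so all of \emph{their} neighbours again lie in $\langle\al\rangle$; the only powers of $\al$ having a neighbour outside $\langle\al\rangle$ are the two ``gateways'' $\al^3$ and $\al^6$, which are joins of three $3$-cycles. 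Hence any path from $\al$ to a vertex outside $\langle\al\rangle$ must pass through $\al^3$ or $\al^6$, and symmetrically must enter $\langle\bt\rangle$ through $\bt^3$ or $\bt^6$. Tracking the first vertex on the path lying outside $\langle\al\rangle$ and the last vertex lying outside $\langle\bt\rangle$ (the vertices immediately before, resp.\ after, them are gateways), one obtains, for $\al,\bt$ with $\langle\al\rangle\ne\langle\bt\rangle$,
\[
d(\al,\bt)\ \ge\ 2+\min\{\,d(g,h):g\in\{\al^3,\al^6\},\,h\in\{\bt^3,\bt^6\}\,\}.
\]
So it suffices to choose $\al,\bt$ so that each gateway of $\al$ is at distance $\ge 3$ from each gateway of $\bt$.

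Next I would collapse all four gateway distances to one group-theoretic condition. Since $\al^6=(\al^3)^2$ and $\al^3=(\al^6)^2$, a transformation commutes with $\al^3$ iff it commutes with $\al^6$, so $\al^3,\al^6$ have the same centralizer in $\mi(X)$ (likewise $\bt^3,\bt^6$); hence all four pairs $(g,h)$ have the same common neighbours. Now let $g=\al^3$, $h=\bt^3$, each a join of three $3$-cycles, and let $\gam$ be a common neighbour. By Proposition~\ref{pcen}, $\gam$ maps cycles of $g$ onto cycles of $g$ (the induced partial permutation of the three cycle-spans is the map of Definition~\ref{dhag}), so $\dom(\gam)$ is a union of the three $3$-element spans of the cycles of $g$; likewise of the spans of $h$. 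If these two partitions of $X$ share no block, the only common unions have size $0$ or $9$, forcing $\dom(\gam)=X$, i.e.\ $\gam\in\sym(X)$. Therefore, if
\[
C_{\sym(X)}(g)\cap C_{\sym(X)}(h)=\{1\},
\]
then $g,h$ share no block (a common block $B$ would put the $3$-cycle of $g$ supported on $B$, extended by the identity elsewhere, into both centralizers), every common neighbour would be a nonidentity permutation in this trivial intersection, and $g,h$ do not commute; so $g$ and $h$ have no common neighbour and are non-adjacent, giving $d(g,h)\ge 3$ for all four gateway pairs. Everything thus comes down to producing $g,h$ with trivial centralizer intersection in $\sym(X)$.

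For the construction, identify $X=\{0,1,\ldots,8\}$ with a $3\times 3$ grid via $x=3i+j$ and place the gateways along the two grid directions but with mismatched orientations:
\[
g=\al^3=(0\,3\,6)\jo(1\,4\,7)\jo(2\,8\,5),\qquad h=\bt^3=(0\,1\,2)\jo(3\,4\,5)\jo(6\,8\,7),
\]
which are the cubes of the $9$-cycles $\al=(0\,1\,2\,3\,4\,8\,6\,7\,5)$ and $\bt=(0\,3\,6\,1\,4\,8\,2\,5\,7)$ (a direct check gives $\al^3=g$, $\bt^3=h$, and $\langle\al\rangle\ne\langle\bt\rangle$ because $h\notin\langle g\rangle$). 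The cycle-spans of $g$ are the ``columns'' and those of $h$ the ``rows''; they share no block, and any element of $C_{\sym(X)}(g)\cap C_{\sym(X)}(h)$ must preserve both partitions, so (as rows and columns are transversal) it has the form $(i,j)\mapsto(\pi i,\tau j)$ with $\pi,\tau$ permutations of $\{0,1,2\}$. Imposing commutation with $g$ and with $h$, and using that every permutation of $\mathbb{Z}_3$ is affine, the orientation data of $g$ and $h$ (column-shift and row-shift vectors both equal to $(1,1,2)$) force $\pi=\tau=\mathrm{id}$.

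This last verification is the crux and the main obstacle: the orientations must be chosen so that no nontrivial ``transversal shift'' survives in both centralizers. The naive choice in which $g,h$ are the two coordinate translations fails, because it leaves a common $\mathbb{Z}_3$ of shifts (and in fact makes $g,h$ commute); the asymmetric shift vector $(1,1,2)$ succeeds precisely because the unique fixed position of its shift-pattern is incompatible between the two directions, which is what kills the intersection in the elementary affine computation above. Granting $C_{\sym(X)}(g)\cap C_{\sym(X)}(h)=\{1\}$, the displayed lower bound yields $d(\al,\bt)\ge 5$, and together with Proposition~\ref{pdio}(2) we conclude $d(\al,\bt)=5$.
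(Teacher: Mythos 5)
Your proof is correct, and while its skeleton matches the paper's (both get the upper bound from Proposition~\ref{pdio}, use Lemma~\ref{ldia4} to force any short path to run through powers of $\al$ and $\bt$, and then reduce everything to showing that the cubes $\al^3$ and $\bt^3$ admit no common neighbour), the two key ingredients are genuinely different. First, your example is different: you arrange the spans of the cycles of $\al^3$ and $\bt^3$ as the columns and rows of a $3\times3$ grid, i.e.\ two \emph{transversal} partitions, whereas the paper's pair $\al^3=(1\,4\,7)\jo(2\,5\,6)\jo(3\,8\,9)$, $\bt^3=(1\,2\,3)\jo(4\,5\,6)\jo(7\,8\,9)$ has blocks meeting in two points. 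Second, your verification is structurally cleaner: you first use Proposition~\ref{pcen} to show that the domain of a common neighbour $\gam$ is a union of blocks of both partitions, hence (transversality) all of $X$, reducing the problem to the purely group-theoretic statement $C_{\sym(X)}(\al^3)\cap C_{\sym(X)}(\bt^3)=\{1\}$, which you settle by an affine computation over $\mathbb{Z}_3$ (the mismatched shift vectors $(1,1,2)$ kill every nontrivial row/column permutation -- I checked the computation and it does force $\pi=\tau=\mathrm{id}$). The paper instead argues directly on the partial transformation $\gam$ via a case analysis on the induced cycle maps $h_\gam^{\al^3}$ and $h_\gam^{\bt^3}$ of Definition~\ref{dhag}, deriving contradictions from their injectivity. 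Your ``gateway'' inequality $d(\al,\bt)\ge2+\min d(g,h)$ is also articulated more carefully than the paper's reduction (which just takes a path of length exactly $4$ and reduces exponents modulo $9$); the one small imprecision is that the hypothesis you should quote for it is $\langle\al\rangle\cap\langle\bt\rangle=\{1\}$ rather than merely $\langle\al\rangle\ne\langle\bt\rangle$ -- but that stronger statement is exactly what your check $\bt^3\notin\langle\al^3\rangle$ delivers, so nothing is lost. What your route buys is a conceptual criterion (transversal block systems plus trivial centralizer intersection in $\sym(X)$) that explains \emph{why} the example works; what the paper's route buys is a self-contained finite case check needing no detour through $\sym(X)$.
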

\begin{proof}
Let $X=\{1,2,\ldots,9\}$, and consider the following $9$-cycles in $\sym(X)$:
\[
\al=(1\,2\,3\,4\,5\,8\,7\,6\,9)\,\mbox{ and }\,\bt=(1\,4\,7\,2\,5\,8\,3\,6\,9).
\]
We claim that the distance between $\al$ and $\bt$ in $\cg(\mi(X))$ is $5$. We know that $d(\al,\bt)\leq5$
by Proposition~\ref{pdio}. Suppose to the contrary that $d(\al,\bt)<5$. Then there are $\del,\gam,\eta\in\mi(X)-\{0,1\}$
such that $\al-\del-\gam-\eta-\bt$. Then, by Lemma~\ref{ldia4}, $\del=\al^p$ and $\eta=\bt^q$ for some $p,q\in\{1,\ldots,8\}$.
The exponent $p$ is $3$, $6$, or relatively prime to $9$.
In the latter case, there is $t\in\{1,\ldots,8\}$, relatively prime to $9$, such that $pt\equiv1\!\pmod9$. Since
$\gam$ commutes with $\del=\al^p$, it also commutes with $(\al^p)^{3t}=(\al^{pt})^3=(\al^1)^3=\al^3$. If $p=6$,
then $\gam$ commutes with $(\al^6)^5=(\al^{10})^3=(\al^1)^3=\al^3$. Hence, in either case, $\gam$ commutes with $\al^3$.
By a similar argument, $\gam$ also commutes with $\bt^3$, and so
\[
\al^3=(1\,4\,7)\jo(2\,5\,6)\jo(3\,8\,9)-\gam-(1\,2\,3)\jo(4\,5\,6)\jo(7\,8\,9)=\bt^3.
\]
Since $\gam\ne0$, there is a cycle $\sig$ in $\bt^3$ such that $\spa(\sig)\subseteq\dom(\gam)$ (by Proposition~\ref{pcen}).
Therefore, $1$, $4$, or $7$ is in $\dom(\gam)$, and so, since $\gam$ commutes with $\al^3$ and $(1\,4\,7)$
is a cycle in $\al^3$, we have $(1\,4\,7)\in\dom(h_\gam^{\al^3})$. There are three possible cases.
\vskip 1mm
\noindent{\bf Case 1.} $(1\,4\,7)h_\gam^{\al^3}=(1\,4\,7)$.
\vskip 1mm
Then $1\gam=1$, $4$, or $7$. If $1\gam=1$, then $4\gam=4$ and $7\gam=7$ by Proposition~\ref{pcen}. But then, since $\gam$
commutes with $\bt^3$, $\gam$ must fix every element of every cycle of $\bt^3$, that is, $\gam=1$. This is a contradiction.
Suppose $1\gam=4$. Then $(1\,2\,3)h_\gam^{\bt^3}=(4\,5\,6)$ with $2\gam=5$ and $3\gam=6$. But then
$(2\,5\,6)h_\gam^{\al^3}=(2\,5\,6)$ and $(3\,8\,9)h_\gam^{\al^3}=(2\,5\,6)$, which is a contradiction
since $h_\gam^{\al^3}$ is injective. If $1\gam=7$, we obtain a contradiction in a similar way.
\vskip 1mm
\noindent{\bf Case 2.} $(1\,4\,7)h_\gam^{\al^3}=(2\,5\,6)$.
\vskip 1mm
Then $1\gam=2$, $5$, or $6$. If $1\gam=2$, then $4\gam=5$ and $7\gam=6$, and so $(4\,5\,6)h_\gam^{\bt^3}=(4\,5\,6)$
and $(7\,8\,9)h_\gam^{\bt^3}=(4\,5\,6)$. This is a contradiction since $h_\gam^{\bt^3}$ is injective.
If $1\gam=5$, then $4\gam=6$, and so $(1\,2\,3)h_\gam^{\bt^3}=(4\,5\,6)$ and $(4\,5\,6)h_\gam^{\bt^3}=(4\,5\,6)$,
again a contradiction. Finally, if
$1\gam=6$, then $7\gam=5$, and so $(1\,2\,3)h_\gam^{\bt^3}=(4\,5\,6)$ and $(7\,8\,9)h_\gam^{\bt^3}=(4\,5\,6)$,
also a contradiction. 
\vskip 1mm
\noindent{\bf Case 3.} $(1\,4\,7)h_\gam^{\al^3}=(3\,8\,9)$.
\vskip 1mm
In this case, we also obtain a contradiction by the argument similar to the one used in Case~2.

Therefore, the assumption $d(\al,\bt)<5$ leads to a contradiction, and so $d(\al,\bt)\geq5$.
Since we already know that $d(\al,\bt)\leq5$, we have $d(\al,\bt)=5$.
\end{proof}

\begin{theorem}\label{tpow}
Let $|X|=n=p^k$, where $p$ is an odd prime and $k\ge2$.  Then the diameter of $\cg(\mi(X))$ 
is $5$.
\end{theorem}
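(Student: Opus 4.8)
The plan is to lean on Proposition~\ref{pdio}(2), which already gives $\diam(\cg(\mi(X)))\in\{4,5\}$ for odd composite $n$; since $n=p^k$ with $k\ge2$ is such an $n$, it suffices to exhibit two $n$-cycles $\al,\bt$ with $d(\al,\bt)=5$, i.e.\ with no path of length $\le4$ between them. Throughout, write $A=\al^{p^{k-1}}$ and $B=\bt^{p^{k-1}}$. Because $\al$ is an $n$-cycle of order $n=p^k$ and $\gcd(p^{k-1},p^k)=p^{k-1}$, the element $A$ is a join of $p^{k-1}$ cycles of length $p$ with $\spa(A)=X$ and $A\ne1$; similarly for $B$. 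The engine of the proof is a reduction: \emph{if $d(\al,\bt)\le4$, then some $\gam\in\mi(X)-\{0,1\}$ commutes with both $A$ and $B$.} By contraposition, any pair $\al,\bt$ admitting no such $\gam$ satisfies $d(\al,\bt)\ge5$, and hence $d(\al,\bt)=5$ by Proposition~\ref{pdio}.

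To prove the reduction, take a path $\al=v_0-v_1-\cdots-v_\ell=\bt$ in $\cg(\mi(X))$ with $\ell\le4$; every vertex is non-central, so each $v_i\notin\{0,1\}$. Since $v_1$ commutes with the $n$-cycle $\al$, Lemma~\ref{ldia4}(2) gives $v_1=\al^{a}$ for some $a\in\{1,\ldots,n-1\}$. Writing $d=\gcd(a,p^k)$, which is a power of $p$ at most $p^{k-1}$ because $0<a<p^k$, we get $\langle\al^{a}\rangle=\langle\al^{d}\rangle\ni\al^{p^{k-1}}=A$; as $A$ is a power of $v_1$, anything commuting with $v_1$ commutes with $A$. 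Symmetrically $v_{\ell-1}=\bt^{b}$ and anything commuting with it commutes with $B$. Now select $\gam$ along the path: for $\ell=4$ take $\gam=v_2$ (it commutes with $v_1=\al^a$ and with $v_3=\bt^b$, hence with $A$ and with $B$); for $\ell\in\{2,3\}$ take $\gam=v_1$ (which commutes with $A$, and with $B$ via commuting with $v_2=\bt^b$, respectively with $\bt$ itself); for $\ell\le1$ (so $\al\bt=\bt\al$) take $\gam=A$, which commutes with $B$ since $A,B$ are powers of the commuting $\al,\bt$. In every case $\gam\ne0$ and $\gam\ne1$, proving the claim.

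It remains to construct $n$-cycles $\al,\bt$ for which no $\gam\in\mi(X)-\{0,1\}$ commutes with both $A$ and $B$, which I would recast group-theoretically. First, realizability is free: by interleaving, every join $A$ of $p^{k-1}$ disjoint $p$-cycles equals $\al^{p^{k-1}}$ for some $n$-cycle $\al$ (order the $n$ elements so that advancing by $p^{k-1}$ positions traces out the prescribed $p$-cycles), so one may choose the permutations $A,B$ first and lift them to $\al,\bt$ independently. Next, suppose $\gam\ne0$ commutes with both $A$ and $B$. By Lemma~\ref{lclo}, $(\dom\gam)A^{-1}\subseteq\dom\gam$, and since $A$ has finite order $\dom\gam$ is $A$-invariant; likewise $B$-invariant, hence invariant under $H=\langle A,B\rangle$. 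If $H$ is transitive, the nonempty invariant set $\dom\gam$ is all of $X$, so $\gam$ is a bijection centralizing $H$, i.e.\ $\gam\in C_{\sym(X)}(H)$. Thus it suffices to choose $p$-regular $A,B$ so that $H$ is transitive and $C_{\sym(X)}(H)=\{1\}$; a clean sufficient condition is that $H$ be \emph{primitive}, for then (as $p^k$ is not prime, $H$ cannot be regular) the point stabilizers are maximal and pairwise distinct, which forces the centralizer to be trivial. This would give $\gam=1$, contradicting $\gam\in\mi(X)-\{0,1\}$.

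The main obstacle is the explicit construction: for every odd prime power $p^k$ with $k\ge2$ one must produce two permutations, each a join of $p^{k-1}$ cycles of length $p$, generating a transitive primitive group on $X$. The case $p=3,k=2$ is exactly Lemma~\ref{lne9}, and its cycles are genuinely twisted rather than a clean grid --- note that a grid-type pair $A,B$ would be centralized by all translations, so some asymmetry is essential. The natural uniform source is the affine group: identify $X$ with $\mathbb{F}_p^{\,k}$, take $A$ to be translation by a nonzero vector (automatically $p$-regular, with all cycles of length $p$) and $B$ a fixed-point-free affine map of order $p$, chosen so that $\langle A,B\rangle$ is $2$-transitive --- hence primitive --- on $X$. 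Verifying that $B$ can be taken $p$-regular and that the pair generates a primitive group for all $p$ and $k$, together with the degenerate low cases, is the delicate part; the rest is the bookkeeping assembled above. Once such $\al,\bt$ are produced, the reduction yields $d(\al,\bt)=5$, so $\diam(\cg(\mi(X)))=5$.
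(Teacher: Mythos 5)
Your opening reduction is sound and coincides with the paper's: any path of length at most $4$ from $\al$ to $\bt$ forces the existence of $\gam\in\mi(X)-\{0,1\}$ commuting with both $A=\al^{p^{k-1}}$ and $B=\bt^{p^{k-1}}$ (the paper argues this exactly as you do, via Lemma~\ref{ldia4} and replacing $\al^a$ by the power $\al^{p^{k-1}}$ of it). Your observation that any join of $p^{k-1}$ disjoint $p$-cycles lifts to an $n$-cycle, and that a nonzero $\gam$ commuting with $A$ and $B$ has $\dom(\gam)$ invariant under $H=\langle A,B\rangle$ (hence $\gam\in C_{\sym(X)}(H)$ when $H$ is transitive), is also correct, and ``transitive, non-regular, primitive $\Rightarrow$ trivial centralizer'' is a valid sufficient condition.

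The gap is that the entire content of the theorem now sits in the unperformed construction, and the specific construction you sketch provably cannot work. If you identify $X$ with $\mathbb{F}_p^{\,k}$, take $A$ to be a translation and $B$ an affine map $x\mapsto xM+v$ of order $p$, then $M$ is unipotent and every element of $\langle A,B\rangle$ has linear part a power of $M$, so $H\leq\mathbb{F}_p^{\,k}\rtimes\langle M\rangle$ is a $p$-group of order at most $p^{k+1}$. A primitive $p$-group has prime degree (maximal subgroups of a $p$-group have index $p$), so for $k\geq2$ such an $H$ is never primitive; a fortiori it is never $2$-transitive, since the stabilizer of a point has order at most $p<p^k-1$. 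So the affine group cannot supply the required pair, and no alternative construction is given or verified --- you explicitly defer ``the delicate part.'' For comparison, the paper does not go through primitivity at all: it writes down two explicit joins $\del,\eta$ of $p^{k-1}$ disjoint $p$-cycles (one in consecutive blocks, one in arithmetic-progression-like cycles with one deliberately twisted cycle), shows via the transitive closure of the ``same cycle'' relation that any common centralizing $\gam\neq0$ has $\dom(\gam)=X$ (this is your transitivity step), and then uses a counting argument on the $(p-1)$-point overlap of two distinguished cycles to force $\gam$ to fix a point, whence $\gam=1$ by the same closure relation. Transitivity plus one forced fixed point suffices; primitivity is neither needed nor, with your generators, available. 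To complete your route you would need a genuinely different source of pairs $A,B$ (e.g.\ two $p$-regular elements generating $A_{p^k}$), together with a proof that such pairs exist for every odd $p$ and $k\geq2$ --- a nontrivial task in its own right.
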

\begin{proof}
By Proposition~\ref{pdio}, it suffices to find two $n$-cycles $\al$ and $\bt$ in $\sym(X)$ such that
the distance between $\al$ and $\bt$ in $\cg(\mi(X))$ is at least $5$. If $n=9$, then such cycles exist by Lemma~\ref{lne9}.

Suppose $n>9$ and let $X=\{1,2,\ldots,n\}$. 
If $\al,\bt \in \mi(X)$ are 
$n$-cycles such that  $\al-\del-\gam-\eta-\bt$ for some $\del,\gam,\eta \in \mi(X)-\{0,1\}$,
then, by the argument similar to the one we used in Lemma~\ref{lne9},
we may assume that $\del=\al^q$ and $\eta=\bt^q$, where $q=p^{k-1}$. Note that then $\del$ and $\eta$
are joins of $q$ cycles, each cycle of length $p$. Consider the following $\del,\eta\in\sym(X)$:
\[
\del=(1 \;\; 2 \;\; \ldots \;\;p) \jo (p+1\;\; p+2 \;\; \ldots\;\; 2p)\jo \cdots \jo (n-p+1 \;\; n-p+2\;\; \ldots \;\;n),
\]
\begin{align*}
\eta=&~~~\,(1  &&q-1& &2q -2   &&\ldots  &&n-3q-p+3 &&n-2q-p+2 &&n-q-p+1) \\
 &\jo (2  &&q && 2q-1 && \ldots &&n-3q-p+4 && n-2q-p+3 && n-q-p+2) \\
&\jo  (3   &&q+1  &&2q    &&\ldots && n-3q-p+5  &&n-2q-p+4  &&n-q-p+3) \\
&\jo  (4   &&q+2  &&2q+1  &&\ldots && n-3q-p+6  &&n-2q-p+5  &&n-q-p+4) \\
&~\vdots \\
&\jo  (q-3  &&2q-5  &&3q-6   &&\ldots && n-2q-p-1  &&n-q-p-2  &&n-p-3) \\
&\jo  (q-2  &&2q-4  &&3q-5     &&\ldots && n-2q-p &&n-q-p-1  &&n-p-2) \\
&\jo  (n-p+1  &&2q-3  && 3q-4     &&\ldots && n-2q-p+1  &&n-q-p  &&n-p-1) \\
&\jo  (n-p+2  &&n-p+3  &&n-p+4   &&\ldots && n-1 &&n  &&n-p) \\
\end{align*}
The construction of $\del$ is straightforward. Regarding $\eta$,
the last cycle,
\[
\tau=(n-p+2\,\,\,n-p+3\,\,\,n-p+4\,\,\ldots\,\,\,n-1\,\,\,n\,\,\,n-p),
\]
is special. (Its role will become clear in the second part of the proof). If $\tau'=(x_1\,x_2\ldots\,x_p)$
is any other cycle in $\eta$, then $x_{i+1}-x_i=q-1$ for every $i\in\{2,\ldots,p-1\}$. Here and in the following, we
assume cycles are always represented  by expressions listing the elements in the fixed orders  from the definitions of $\delta$ 
and $\eta$, so that we may speak of the position of an element in a cycle.
 
Let $\al$ and $\bt$ be $n$-cycles such that $\al^q=\del$ and $\bt^q=\eta$. As $\del$ and $\eta$ consist of 
$q$ disjoint cycles of length $p$, such $\al$ and $\bt$ exist. 
We claim that $d(\al,\bt)\geq5$. Suppose to the contrary that $d(\al,\bt)<5$. Then, 
by the foregoing argument, there exists $\gam\in \mi(X)-\{0,1\}$ with $\del-\gam-\eta$. 

Define a binary relation $\sim$ on $X$ by: $x\sim y$ if there exists a cycle $\rho$ in $\del$ or in $\eta$ with 
$\{x,y\} \subseteq \spa(\rho)$. Let $\sim^*$ be the transitive closure of $\sim$.
It follows from  Proposition~\ref{pcen} that $\sim$ preserves the following two properties:
``$\gam$ is defined at  $x$'' and ``$\gam$ fixes $x$''. It is then clear that $\sim^*$ preserves
these properties as well. We will write $x\sim_\del y$ if $x$ and $y$ are in the same cycle of $\del$, and
$x\sim_\eta y$ if $x$ and $y$ are in the same cycle of $\eta$ (so $\sim\,=\,\sim_\del\cup\sim_\eta$).

We claim that $\sim^*\,=X\times X$. Consider the set $A=\{n-q-p+1,n-q-p+2,\ldots,n-p\}$ of the rightmost
elements of the cycles in $\eta$. Note that $A$ contains $t=q/p$ multiples of $p$:
\begin{equation}\label{e1tpow}
n-p,\,\,n-2p,\,\ldots,\,\,n-tp.
\end{equation}
Let $i\in\{1,2,\ldots,t-1\}$. We claim that $n-ip\sim^*n-(i+1)p$. First, we have $n-ip\sim_\del n-ip-p+1$
since $(n-ip-p+1\,\,n-ip-p+2\,\ldots\,\,n-ip)$ is a cycle in $\del$. Next, $n-ip-p+1$ is a rightmost element of a cycle
in $\eta$ that is different from $\tau$ (the last cycle). We have already observed that $n-ip-p+1-(q-1)$ is the 
preceding element in the same cycle. Thus
\[
n-ip-p+1\sim_\eta n-ip-p+1-(q-1)=n-q-ip-p+2.
\]
Further, $n-q-ip-p+2\sim_\del n-q-ip-p+1$, and finally
\[
n-q-ip-p+1\sim_\eta n-q-ip-p+1+(q-1)=n-ip-p=n-(i+1)p.
\]
To summarize,
\[
n-ip \sim_\del  n-ip-p+1\sim_\eta  n-q-ip-p+2 \sim_\del  n-q-ip-p+1 \sim_\eta  n-ip-p= n-(i+1)p.
\]
It follows by the transitivity of $\sim^*$ that any two multiples of $p$ from (\ref{e1tpow}) are $\sim^*$-related.
Let $x,y\in X$. Then there are $z,w\in A$ such that $x\sim_\eta z$ and $y\sim_\eta w$. Now, $z$ must be in some
cycle of $\del$ whose rightmost element is a multiple of $p$. Since $z\in A$, that multiple must come
from $(\ref{e1tpow})$, that is, $z\sim_\del n-jp$ for some $j\in\{1,2,\ldots,t\}$, where $t=q/p$.
Similarly, $w\sim_\del n-lp$ for some $l\in\{1,2,\ldots,t\}$. Hence
\[
x\sim_\eta z\sim_\del n-jp\sim^*n-lp\sim_\del w\sim_\eta y.
\]
Thus $x\sim^* y$, and so $\sim^*\,=X\times X$.

As $\gam \ne 0$, $\gam$ must be defined on some element of $X$. Since $\sim^*$ preserves the statement
``$\gam$ is defined at $x$'' and $\sim^*\,=X\times X$, we have $\dom(\gam)=X$.

Consider the cycle $\rho=( n-p+1 \;\; n-p+2\;\; \ldots \;\;n)$ in $\del$ and the cycle
\[
\tau=(n-p+2  \;\; n-p+3  \;\; n-p+4   \;\; \ldots \;\;  n-1  \;\; n   \;\; n-p)
\]
in $\eta$, and note
that $\spa(\rho)\cap\spa(\tau)$ consists of $p-1$ elements. Thus,
$\spa(\rho h_\gam^\del)\cap\spa(\tau h_\gam^\eta)$ also consists of $p-1$ elements.
However, for all cycles $\rho'$ in $\del$ and $\tau'$ in $\eta$, if $\rho'\ne\rho$, then
$\spa(\rho')\cap\spa(\tau')$ consists of either $1$ or $2$ elements, where $2$ 
is only possible when $n=p^2$. In the latter case, $p\geq5$ (since $n>9$), and so $p-1>2$. 
If $n=p^k$ with $k>2$, then $p-1>1$ (even when $p=3$). Hence $\rho h_\gam^\del=\rho$ since otherwise
we would have $|\spa(\rho h_\gam^\del)\cap\spa(\tau h_\gam^\eta)|<p-1$. 
Applying the same argument to $\tau$, we see that $\tau h_\gam^\eta= \tau$. 

These two conditions imply that the element $n-p$ that occurs in $\tau$ must be fixed by $\gam$.
Since $\sim^*$ preserves the statement
``$\gam$ fixes $x$'' and $\sim^*\,=X\times X$, it follows that $\gam$ fixes every element of $X$.
So $\gam=1$, which is a contradiction. We have proved that $d(\al,\bt)\geq5$.

It now follows from Proposition~\ref{pdio} that the diameter of $\cg(\mi(X))$ is $5$.
\end{proof} 

The problem of finding the exact value of the diameter of $\cg(\mi(X))$ when $n$ is odd and divisible
by at least two primes remains open.
By Lemmas~\ref{ldia2} and~\ref{ldia3}, $d(\al,\bt)\leq4$ for all $\al,\bt\in\mi(X)$ such that $\al$ or $\bt$ is not an $n$-cycle.
So the exact value of the diameter (which is $4$ or $5$) depends on the answer to the following question.
\vskip 1mm
\noindent{\bf Question.} Let $n\geq 15$ be odd and divisible by at least two primes. Are there $n$-cycles $\al,\bt\in\mi(X)$
such that $d(\al,\bt)=5$?

We conclude this section with a discussion of the diameter of the commuting graph
of the symmetric group $\sym(X)$. 
Iranmanesh and Jafarzadeh have proved \cite[Theorem~3.1]{IrJa08} that if $n$ and $n-1$ are not primes,
then the diameter of $\cg(\sym(X))$ is at most $5$. (If $n$ or $n-1$ is a prime, then the diameter
of $\cg(\sym(X))$ is $\infty$.)

Dol\u{z}an and Oblak have strengthened this result \cite[Theorem~4]{DoOb11} by showing that if $n$ and $n-1$
are not primes, then the distance between $\al=(1\,2\,\ldots\,n)$ and $\bt=(1\,2\, \ldots\, n-1)\jo(n)$
in $\cg(\sym(X))$ is at least $5$ (so the diameter of $\cg(\sym(X))$ is exactly $5$).
However, their proof contains a gap.
They state that if $\rho,\sig,\tau\in\sym(X)$ are such that $\rho-\sig-\tau$ and the length of any cycle in
$\rho$ is relatively prime to the length of any cycle in $\tau$, then $\sig$ must fix every point in $X$,
and so $\sig=1$. However, this statement is not true, even with the additional assumptions that $\rho$ is the power of 
an $n$-cycle and $\tau$ is the power of a disjoint join between an $(n-1)$-cycle and a $1$-cycle.
Let $X=\{1,2,\ldots,10\}$, and consider
\[
\rho=(1\,2)\jo(3\,4)\jo(5\,6)\jo(7\,8)\jo(9\,10)\,\,\mbox{ and }\,\,\tau=(1\,3\,5)\jo(2\,4\,6)\jo(7\,8\,9)\jo(10).
\]
Then for $\sig=(1\,3\,5)\jo(2\,4\,6)\jo(7)\jo(8)\jo(9)\jo(10)$, we have $\rho-\sig-\tau$ but $\sig\ne1$. 

It is possible to fix this gap by taking into account the special form of $\al$ and $\bt$ in the
original proof. We do this in the following lemma.

\begin{lemma}\label{ldol}
Let $X=\{1,2,\ldots,n\}$, where neither $n$ nor $n-1$ is a prime. Then, the distance between $\al=(1\,2\,\ldots\,n)$ and 
$\bt=(1\,2\, \ldots\, n-1)\jo(n)$ in $\cg(\sym(X))$ is at least $5$.
\end{lemma}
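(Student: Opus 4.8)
The plan is to argue by contradiction: assume $d(\al,\bt)\le 4$ and fix a path $\al-\del-\gam-\eta-\bt$ with $\del,\gam,\eta\in\sym(X)\setminus\{1\}$ (a shorter path is handled the same way, since the vertices neighbouring $\al$ and $\bt$ still have the special form below and one can take $\gam$ to be $\bt^q$ or $\al^p$). First I would pin down $\del$ and $\eta$. Since $\al$ is an $n$-cycle, Lemma~\ref{ldia4}(2) gives $\del=\al^p$ for some $p\in\{1,\dots,n-1\}$. For $\eta$ I cannot quote Lemma~\ref{ldia4} (as $\bt$ is an $(n-1)$-cycle together with the fixed point $n$), so instead I would read off from Proposition~\ref{pcen} that any permutation commuting with $\bt$ must fix $n$, the unique fixed point of $\bt$, and act on $\{1,\dots,n-1\}$ as a rotation of the $(n-1)$-cycle; hence $\eta=\bt^q$ for some $q\in\{1,\dots,n-2\}$. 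Consequently $\gam$ commutes with $\del=\al^p$ and with $\eta=\bt^q$.

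Next I would pass to convenient powers. Put $d=\gcd(p,n)$ and $e=\gcd(q,n-1)$; then $\langle\al^p\rangle=\langle\al^d\rangle$ and $\langle\bt^q\rangle=\langle\bt^e\rangle$, so $\gam$ commutes with $\al^d$ and with $\bt^e$. Here $d\mid n$ and $e\mid n-1$ are proper divisors (because $\del,\eta\ne1$), so $d\le n/2$ and $e\le (n-1)/2$, whence $d+e\le n-1$; and since $\gcd(n,n-1)=1$ we obtain the crucial coprimality $\gcd(d,e)=1$. Note that $\al^d$ is a join of $d$ cycles whose spans are exactly the residue classes modulo $d$ inside $\{1,\dots,n\}$, while $\bt^e$ is a join of the residue classes modulo $e$ inside $\{1,\dots,n-1\}$ together with the singleton $\{n\}$; in particular $n$ is the unique fixed point of $\bt^e$ (since $(n-1)/e\ge2$). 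As $\gam$ commutes with $\bt^e$, it carries fixed points of $\bt^e$ to fixed points of $\bt^e$, so $\gam$ fixes $n$.

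Now I would run the fixed-point propagation exactly as in Theorem~\ref{tpow}. Define $x\sim y$ to mean that $x,y$ lie in a common cycle of $\al^d$ or of $\bt^e$, and let $\sim^*$ be its transitive closure. By Proposition~\ref{pcen}, $\sim$ (hence $\sim^*$) preserves the property ``$\gam$ fixes $x$'': if $\gam$ fixes one element of a cycle of $\al^d$ or of $\bt^e$, then $\gam$ fixes that whole cycle. Since $\gam$ fixes $n$, it therefore fixes the entire $\sim^*$-class of $n$. The heart of the proof is to show that this class is all of $X$, i.e. that $\sim^*\,=X\times X$; once this is established $\gam=1$, contradicting $\gam\ne1$, and the lemma follows.

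Finally, the connectivity $\sim^*\,=X\times X$ is where I expect the real work. The element $n$ is joined to $\{1,\dots,n-1\}$ through the $\al^d$-cycle $\{d,2d,\dots,n\}$, so it suffices to show that $\{1,\dots,n-1\}$ forms a single $\sim^*$-class under ``equal modulo $d$ or equal modulo $e$.'' This is an elementary but delicate number-theoretic statement: with $\gcd(d,e)=1$ and the range bound $d+e\le n-1$ giving enough room, one connects the residue classes by alternately jumping within a mod-$d$ class and within a mod-$e$ class, the coprimality guaranteeing that such alternations eventually reach every element. I would prove it either by induction on $d+e$ (a Euclidean-style descent, replacing the larger modulus by its remainder) or, matching the style of the paper, by an explicit chain of $\sim_\del$- and $\sim_\eta$-steps as in the connectivity argument of Theorem~\ref{tpow}. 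It is precisely this connectivity claim, rather than any of the commuting-element bookkeeping, that is the main obstacle; the counterexample at the end of Section~\ref{scdg} shows exactly why coprimality of cycle lengths alone is insufficient and why one must exploit the specific residue-class structure of the powers $\al^d$ and $\bt^e$.
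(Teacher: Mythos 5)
Your setup is correct and coincides with the paper's: reducing $\del$ and $\eta$ to powers $\al^d$, $\bt^e$ with $d\mid n$, $e\mid n-1$ proper divisors (hence $\gcd(d,e)=1$ and $d+e\le n-1$), observing that $\gam$ fixes $n$ because $(n)$ is the unique $1$-cycle of $\bt^e$, and propagating the property ``$\gam$ fixes $x$'' along cycles of $\al^d$ and of $\bt^e$ via Proposition~\ref{pcen}. All of that bookkeeping is sound. But the proposal has a genuine gap at exactly the point you yourself flag as ``the heart of the proof'': the claim that the join of the mod-$d$ partition of $\{1,\ldots,n\}$ and the mod-$e$ partition of $\{1,\ldots,n-1\}$ is a single class is asserted, with two candidate strategies named, but never proved. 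The phrase ``the coprimality guaranteeing that such alternations eventually reach every element'' is not an argument --- indeed the paper's own counterexample shows that coprimality of cycle lengths is not enough in general, and an alternation of $\pm d$ and $\pm e$ jumps must be shown to stay inside the allowed ranges ($\{1,\ldots,n\}$ for mod-$d$ moves, $\{1,\ldots,n-1\}$ for mod-$e$ moves). Without that, the lemma is not established.

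For comparison, the paper closes this step with a short explicit chain rather than a descent: it indexes the $d$ cosets of $\langle d\rangle$ in $\mathbb Z_n$ as $C_i=\langle d\rangle+ie$ (possible since $\gcd(d,e)=1$, so adding $e$ cycles through all cosets), starts from $C_d=\langle d\rangle\ni n$, and passes from $C_i$ to $C_{i+1}$ by taking $x=\min C_i\le d$, so that $x+e\le d+e\le n-1$ places $x$ and $x+e$ in the same cycle of $\bt^e$ while $x+e\in C_{i+1}$. Iterating $d-1$ times fixes every coset, forcing $\sig=1$. This is precisely the ``explicit chain of $\sim_\del$- and $\sim_\eta$-steps'' you mention as one option, and it does use the range bound $d+e\le n-1$ that you correctly isolated; you simply need to carry it out. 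With that paragraph supplied, your proof would be complete and essentially identical to the paper's.
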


\begin{proof}
Suppose to the contrary that $d(\al,\bt)<5$. Then $\al-\rho-\sig-\tau-\bt$ for some $\rho,\sig,\tau\in\sym(X)-\{1\}$.
It easily follows from the proof of Lemma~\ref{ldia4} that $\rho=\al^m$ and $\tau=\bt^k$
for some $m\in\{1,\ldots,n-1\}$ and some $k\in\{1,\ldots,n-2\}$. We may assume that $m$ is a proper divisor of $n$.
(If $m$ and $n$ are relatively prime, then $\al^m=\al$, and so we may replace $\rho=\al^m$ in $\al-\rho-\sig-\tau-\bt$
with $\al^{m'}$, where $m'$ is any proper divisor of $n$. Similarly, if $m=lm'$, where $l$ and $n$ are relatively prime
and $m'$ is a proper divisor of $n$, we can replace $\rho=\al^m$ with $\al^{m'}$.) Similarly,
we may assume that $k$ is a proper divisor of $n-1$. Note that $m$ and $k$ are relatively prime.
The permutation $\rho=\al^m$ is the join of $m$ cycles, each of length $t=n/m$:
\begin{equation}\label{e1ldol}
\rho=\al^m=\lam_1\jo\lam_2\jo\cdots\jo\lam_m.
\end{equation}
Consider the cyclic group $\mathbb Z_n=\{1,2,\ldots,n\}$ of integers modulo $n$ and the subgroup $\langle m\rangle$
of $\mathbb Z_n$. Then the spans of the cycles in $\rho=\al^m$ are precisely the cosets of the group $\langle m\rangle$.
Since $k$ and $m$ are relatively prime, the cosets of $\langle m\rangle$ are
\[
\langle m\rangle+k,\,\,\langle m\rangle+2k,\,\ldots,\,\,\langle m\rangle+mk.
\]
We may order the cycles in (\ref{e1ldol}) in such a way that $\spa(\lam_i)=\langle m\rangle+ik$ for every $i\in\{1,2,\ldots,m\}$.

Since $(n)$ is the only $1$-cycle in $\tau=\bt^k$, $\sig$ fixes $n$ by Proposition~\ref{pcen}. 
Recall that, by Proposition~\ref{pcen}, if $\sig$ fixes some element of a cycle in $\rho$ or in $\tau$,
then it fixes all elements of that cycle.
Thus $\sig$ fixes all elements of $\spa(\lam_m)$ (since $\spa(\lam_m)=\langle m\rangle+mk=\langle m\rangle$ contains $n$).
Since $m \le n/2$ and $k \le (n-1)/2$, there is $x\in\spa(\lam_m)$ such that 
$x+k\le n-1$ (in standard, non-modular addition). Thus $x$ and $x+k$ are in the same cycle of $\tau$ (since
$\tau=\bt^k$ is a join of $(n)$ and $k$ cycles, each of length $s=(n-1)/k$, and the span of each cycle of length $s$
is closed under addition of $k$ modulo $n-1$). Hence, since $\sig$ fixes $x$, $\sig$ also fixes $x+k$.
But $x+k\in\spa(\lam_1)$ (since $\spa(\lam_1)=\langle m\rangle+k=(\langle m\rangle+mk)+k$), and so $\sig$ fixes
all elements of $\spa(\lam_1)$.

Applying the foregoing argument $m-2$ more times, to cycles $\lam_1,\ldots,\lam_{m-1}$, will show
that $\sig$ fixes all elements of every cycle in $\rho$. Hence $\sig=1$, which is a contradiction.
Thus $d(\al,\bt)\geq5$.
\end{proof}

\section{Problems}\label{spro}

In the process of proving Theorem~\ref{tgen},
we came across a purely combinatorial 
conjecture that, if true, could simplify some of the proofs. 
As this combinatorial problem may be of interest regardless of the commuting graphs, we present it here. 

\begin{prob}
{\rm
Let $s,t>1$ be natural numbers. Suppose $A$ is an $s\times t$ matrix with entries from some set $S$
such that:
\begin{itemize}
  \item [(a)] entries in each row of $A$ are pairwise distinct;
  \item [(b)] entries in each column of $A$ are pairwise distinct; and
  \item [(c)] there is no $a\in S$ such that $a$ occurs in \emph{every} row of $A$ or $a$ occurs in \emph{every} column of $A$.
\end{itemize}
For given $s$ and $t$ find the smallest $S$ that satisfies the three conditions above.  
In particular, is it necessarily true that $A$ contains at least $s+t-1$ distinct entries?
}
\end{prob}

For a graph $G=(V,E)$, denote by $\aut(G)$ the group of automorphisms of $G$. Recall that $T(X)$ denotes
the semigroup of full transformations on $X$.
The automorphism groups of the commuting graphs of $T(X)$ and of $\mi(X)$ are, 
comparatively to the size of the graphs themselves, very large. 
We list here their cardinalities for small values of $n=|X|$, which we have obtained using GAP \cite{Scel92} and GRAPE \cite{So06}.
\[
\begin{tabular}{|c|c|c|}\hline
$n$&$|\aut(\cg(\mi(X)))|$&$|\aut(\cg(T(X)))|$\\ \hline
$2$&$2^{2}\cdot 3$&$2\cdot3$\\ \hline
$3$&$2^{9}\cdot3$&$2^{5}\cdot 3^{4}$\\ \hline
$4$&$2^{38}\cdot3^5$&$2^{34}\cdot3$\\ \hline
$5$&$2^{231}\cdot 3^{44}\cdot 5^2$&$2^{410}\cdot3^9\cdot5^2$\\\hline
\end{tabular}
\]

\begin{prob}
{\rm
Describe the automorphism groups of the commuting graphs of $\mi(X)$, $T(X)$, and $Sym(X)$.
} 
\end{prob}

The diameter of the commuting graph of $T(X)$ has been determined in \cite[Theorems~2.22]{ArKiKo11}.

\begin{prob}
{\rm
Find the clique number of the commuting graph of $T(X)$.
}
\end{prob}

A related problem is to determine the chromatic number of a given commuting graph.

\begin{prob}
{\rm
Find the chromatic numbers of the commuting graphs of $\mi(X)$, $T(X)$, and $\sym(X)$.  
}
\end{prob}

It has been proved in \cite[Theorem~4.1]{ArKiKo11}
that for every natural $n$, there exists a 
semigroup (consisting of idempotents) such that
the diameter of its commuting graph is $n$. 
It has been conjectured
that there exists a common upper bound of the diameters of the (connected) commuting graphs of finite groups. 

\begin{prob}
{\rm 
Is it true that for every natural $n$, there  exists a finite \emph{inverse} semigroup whose commuting graph has diameter $n$? 
}
\end{prob}

The commuting graphs of finite groups have attracted a great deal of attention. There is a parallel
concept of the non-commuting graph of a finite group, which has also been the object of intensive study
\cite{AbAkMa06,Da09,Ne76,ZhSh05}. (A \emph{non-commuting graph} of a finite nonabelian group $G$ 
is a simple graph whose
vertices are the non-central elements of $G$ and two distinct vertices $x,y$ are adjacent if $xy\ne yx$.)
Once again, the concept carries over to semigroups, but nothing is known about the non-commuting graphs of semigroups. 

\begin{prob}
{\rm 
Find the diameters, clique numbers, and chromatic numbers of the non-commuting 
graphs of $T(X)$ and $\mi(X)$.  Is it true that for every natural $n$, there exists a 
semigroup whose non-commuting graph has diameter $n$? 
}
\end{prob}

In the present paper and \cite{ArKiKo11}, the commuting graphs of $\mi(X)$, $T(X)$, and their ideals have been
investigated. However, there are many other subsemigroups of $\mi(X)$ and $T(X)$ that have been intensively
studied (see \cite{vhf,GaMa09}).

\begin{prob}
{\rm 
Calculate the diameters, clique numbers, and chromatic numbers
of commuting and non-commuting graphs of various subsemigroups of $\mi(X)$ and $T(X)$.
}
\end{prob}


\noindent {\bf Acknowledgment}
The first author was partially supported  by FCT through the following projects: PEst-OE/MAT/UI1043/2011, Strategic Project of Centro de \'Algebra da Universidade de Lisboa; and PTDC/MAT/101993/2008, Project Computations in groups and semigroups .

The research of the second author leading to these results has received funding from the
European Union Seventh Framework Programme (FP7/2007-2013) under
grant agreement no.\ PCOFUND-GA-2009-246542 and from the Foundation for 
Science and Technology of Portugal.

\end{document}